\newcounter{characterizations}
\newcommand{\R}{\mathbb{R}}
\newcommand{\Z}{\mathbb{Z}}
\newcommand{\s}{\sigma}
\newcommand{\obs}[1]{\operatorname{{\mathcal O}_3\!}{(#1)}}  
\newcommand{\obsfull}[2]{\operatorname{{\mathcal O}_3\!}{(#1, #2)}} 
\newcommand{\obsfr}[1]{\operatorname{{\mathcal O}^{\mathrm fr}_3\!}{(#1)}}
\newcommand{\obsn}[1]{\operatorname{{\mathcal O}_n\!}{(#1)}} 
\newcommand{\obsnfull}[2]{\operatorname{{\mathcal O}_n\!}{(#1, #2)}}
\newcommand{\obsnfr}[1]{\operatorname{{\mathcal O}^{\mathrm fr}_n\!}{(#1)}} 
\newcommand{\vk}[1]{\operatorname{{\mathcal O}_2\!}{(#1)}}  
\newcommand{\svk}[1]{\operatorname{{\mathcal W}_2\!}{(#1)}}  
\newcommand{\vkfr}[1]{\operatorname{{\mathcal O}^{\mathrm fr}_2\!}{(#1)}}
\newcommand{\gobs}[1]{\operatorname{{\mathcal W}_3\!}{(#1)}}  
\newcommand{\gobsn}[1]{\operatorname{{\mathcal W}_n\!}{(#1)}}  
\newcommand{\confs}[2]{\operatorname{C_s\!}{(#1, #2)}} 
\newcommand{\conf}[2]{\operatorname{C}{(#1, #2)}}
\newcommand{\subconf}[2]{\operatorname{C}_0{(#1, #2)}}
\newcommand{\deletedpower}[2]{{#1}^{#2}_{\Delta}}
\newcommand{\Fin}{\mathbb I} 
\newcommand{\op}{\operatorname{op}}
\newcommand{\Top}{\operatorname{Top}} 
\newcommand{\emb}{\operatorname{Emb}} 
\newcommand{\nat}{\operatorname{Nat}} 
\newcommand{\hnat}{\operatorname{hNat}} 
\newcommand{\holim}{\operatorname{holim}}
\newcommand{\map}{\operatorname{map}} 
\newcommand{\co}{\colon\thinspace}
\newtheorem{theorem}{Theorem}[section]
\newtheorem{lemma}[theorem]{Lemma}
\newtheorem{construction}[theorem]{Construction}
\newtheorem{conj}[theorem]{Conjecture}
\newtheorem{proposition}[theorem]{Proposition}
\newtheorem{corollary}[theorem]{Corollary}
\numberwithin{equation}{section}
\theoremstyle{definition}
\newtheorem{notation}[theorem]{Notation}
\newtheorem{definition}[theorem]{Definition}
\newtheorem{remark}[theorem]{Remark}
\newtheorem{convention}[theorem]{Convention}
\theoremstyle{remark}
\begin{document}

\title[Embedding obstructions in ${\mathbb \R^d}$]{Embedding obstructions in ${\mathbb \R^d}$ from the Goodwillie-Weiss calculus and Whitney disks}

\author{Gregory Arone}
\address{Gregory Arone{\hfil\break} Department of Mathematics, Stockholm University}
\email{}

\author{Vyacheslav Krushkal}
\address{Vyacheslav Krushkal{\hfil\break} Department of Mathematics, University of Virginia, Charlottesville, VA 22904-4137}
\email{krushkal\char 64 virginia.edu}

\begin{abstract}
Given a finite CW complex $K$, we use a version of the Goodwillie-Weiss tower to formulate an obstruction theory for embedding $K$ into a Euclidean space $\R^d$. For $2$-dimensional complexes in $\R^4$, a geometric analogue is also introduced, based on intersections of Whitney disks and more generally on the intersection theory of Whitney towers developed by Schneiderman and Teichner. We focus on the first obstruction beyond the classical embedding obstruction of van Kampen. In this case  we show the two approaches lead to essentially the same obstruction. We also give another geometric interpretation of our obstruction, as a triple collinearity condition. Furthermore, we relate our obstruction to the Arnold class in the cohomology of configuration spaces. The obstructions are shown to be realized in a family of examples. Conjectures are formulated, relating higher versions of these homotopy-theoretic, geometric and cohomological theories.    
\end{abstract}

\maketitle

\section{Introduction}

Let $K$ be a finite $CW$ complex of dimension $m$.
In this paper we introduce a new obstruction to the existence of a topological embedding $K\hookrightarrow \R^d$. The obstruction is defined for all $m$ and $d$, but our motivation comes primarily from questions about embedding $2$-dimensional complexes in $\R^4$.

\begin{remark}
It is worth noting that by a theorem of Stallings \cite{Stallings} (see also \cite{DR}), a $k$-connected $m$-complex is simple homotopy equivalent to a subcomplex of ${\mathbb R}^{2m-k}$. In particular, the embedding problem {\em up to homotopy} for $2$-complexes in ${\mathbb R}^4$ is trivial, cf. \cite{Curtis}. The subject of this paper is the much more subtle problem of embeddability of a given complex $K$ without changing it by a homotopy.
\end{remark}

We will give several definitions of the obstruction. One of the constructions is topological, and is inspired by the Embedding Calculus of Goodwillie and Weiss. The second construction is geometric, and is based on intersection theory of Whitney disks. We also give another geometric interpretation of the obstruction, as a triple collinearity condition. Finally we  give an algebraic description, in terms of the Arnold relation in the cohomology of configuration spaces. We will show that the different definitions agree, in an appropriate sense.

The most general construction is the topological one. It uses configuration spaces. Let 
$\conf{X}{n}= \{(x_1,\ldots, x_n)| \, x_i\neq x_j,\; {\rm for} \; i\ne j\}$
denote the $n$-point configuration space of a space $X$. The space $\conf{X}{n}$ has an action of the symmetric group $\Sigma_n$, permuting the coordinates. 

Let $\emb(K, \R^d)$ be the space of topological embeddings of $K$ into $\R^d$. Suppose $f\colon K\hookrightarrow \R^d$ is an embedding. Let $f^n\colon K^n\to (\R^d)^n$ be the $n$-th cartesian power of $f$. Since $f$ is injective, $f^n$ restricts to a map from $\conf{K}{n}$ to $\conf{\R^d}{n}$. This map is sometimes called the deleted $n$-th power of $f$. Note that the deleted power of $f$ is a $\Sigma_n$-equivariant map. Thus for each $n\ge 1$ we have defined an evaluation map, where the notation on the right indicates $\Sigma_n$-equivariant maps:
\begin{equation}\label{eq: naive}
\emb(K, \R^d)\to \map(\conf{K}{n}, \conf{\R^d}{n})^{\Sigma_n}.
\end{equation}
The map~\eqref{eq: naive} implies that for $K$ to be embeddable in $\R^d$, it is necessary that for every $n$ there exists a $\Sigma_n$-equivariant map from $\conf{K}{n}$ to $\conf{\R^d}{n}$. This observation gives rise to obstructions to existence of embeddings. The study of the obstruction arising from the case $n=2$ of~\eqref{eq: naive} goes back to van Kampen. We will review it in subsection~\ref{2 points} below.

The key idea of the paper is to use a refinement of the map~\eqref{eq: naive}. Rather than just consider the action of the symmetric groups on configuration spaces, we also take into account projection maps $ \conf{X}{i}\to \conf{X}{i-1}$ that omit one of the points. For each $n$ we define $T_n\emb(K, \R^d)$ to be, roughly speaking, the space of compatible $n$-tuples of functions 
\[(f_1, \ldots, f_n)\in \prod_{i=1}^n\map(\conf{K}{i}, \conf{\R^d}{i})^{\Sigma_i},\] 
where the maps $f_i$ respect the action the projection maps, at least up to coherent homotopies. More precisely, $T_n\emb(K, \R^d)$ is the space of derived natural transformations from the functor $\conf{K}{-}$ to $\conf{\R^d}{-}$ over the category of sets of size at most $n$ and injective functions between them. More details and a formal definition can be found in Section~\ref{the tower}.

The spaces $T_n\emb(K, \R^d)$ fit into a tower of spaces under $\emb(K, \R^d)$, as follows
\begin{equation}\label{eq: primitive}
\begin{tikzcd}[row sep=scriptsize,column sep=scriptsize]
\emb(K, \R^d)\arrow[d] \arrow{dr}\arrow{drrr} & & & &
\\\cdots \to T_n\emb(K, \R^d)\arrow{r} &  T_{n-1}\emb(K, \R^d) \arrow{r}& [-2em]  \cdots\arrow{r}   & [-2em] T_2\emb(K, \R^d)\arrow{r} & *.
\end{tikzcd}
\end{equation}
Since there is a map $\emb(K, \R^d) \to T_n\emb(K, \R^d)$, a necessary condition for $\emb(K, \R^d)$ to be non-empty is that $T_n\emb(K, \R^d)$ is non-empty for all $n$. This is the basis for our obstructions to embeddability of $K$ into $\R^d$. More specifically, our strategy is to look for an obstruction for a path component of $T_{n-1}\emb(K, \R^d)$ to be in the image of a path component of $T_{n}\emb(K, \R^d)$.
There is a cohomological obstruction $\obsn{K}$ to this lifting problem, formulated in Theorem \ref{thm: cohom obs}. In this way we obtain an infinite sequence of obstructions to the existence of a topological embedding of $K$ into $\R^d$. As we will review shortly, the case $n=2$ is classical. The case $n=3$ is the main subject of this paper. We hope that a more detailed study of higher obstructions, corresponding to $n>3$, will be pursued in future work.
\begin{remark}
The tower~\eqref{eq: primitive} is inspired by the embedding calculus of Goodwillie and Weiss~\cite{Weiss, GW}. Goodwillie and Weiss constructed a tower of approximations - the so called ``Taylor tower'' - to the space of {\it smooth} embeddings $\emb(M, N)$, where $M$ and $N$ are smooth manifolds. The tower~\eqref{eq: primitive} is a simplified version of their Taylor tower. The crucial difference between their construction and ours is that they impose compatibility not just with reordering and forgetting points, but also with doubling points. 
\end{remark}
Note that we make no claim that the induced map 
\[
\emb(K, \R^d)\longrightarrow \holim T_n\emb(K, \R^d)
\]
is an equivalence. This is in contrast with the Taylor tower of Goodwillie and Weiss, which is known to converge when the codimension is at least three.  Nevertheless, our version of the tower is useful for detecting non-embeddability of topological embeddings. In particular, it contains information about the problem of embedding $2$-complexes in $\R^4$. 

For $2$-complexes in $\R^4$ we also consider an alternative, geometric approach based on the failure of the Whitney trick in this dimension. Some instances of this approach are well-known, for example in the study of Milnor's invariants \cite{Milnor}. More generally, Schneiderman and Teichner \cite{ST} developed the intersection theory of {\em Whitney towers} in $4$-manifolds. We use these ideas to formulate embedding obstructions for $2$-complexes in $\R^4$.  

Considering the first new obstruction, we show that these a priori unrelated approaches in fact give the same result (Theorem~\ref{thm: obstructions coincide}). This provides a useful perspective on both of them: the homotopy-theoretic obstruction is manifestly well-defined but lacks an immediate geometric interpretation; the Whitney tower approach has a clear geometric meaning but establishing its well-definedness directly is a challenging problem.

In the following two subsections we discuss in concrete terms the obstructions arising at the bottom stages of the tower $T_n\emb(K, \R^d)$: the case $n=2$ corresponding to the classical van Kampen obstruction and the new obstruction arising from $n=3$.

\subsection{The van Kampen obstruction from ${\mathbf 2}$-point configuration spaces} \label{2 points}
We will now recall some of the classical results in the subject and relate them to our setting.
Suppose there exists a topological embedding $f\colon K\hookrightarrow \R^d$. We saw that it gives rise to a $\Sigma_2$-equivariant map - the deleted square of $f$:
\begin{equation}\label{eq: f2}
\deletedpower{f}{2}\colon \conf{K}{2} \to \conf{\R^d}{2}.
\end{equation}

The existence of a $\Sigma_2$-equivariant map $\conf{K}{2} \to \conf{\R^d}{2}$ is a necessary condition for the existence of a topological embedding $f\colon K\hookrightarrow \R^d$. To relate this discussion to our tower \eqref{eq: primitive}, let us note that it is easy to see that $T_1\emb(K, \R^d)\simeq \{*\}$, and
\begin{equation} \label{T2}
T_2\emb(K, \R^d)\simeq \map(\conf{K}{2}, \conf{\R^d}{2})^{\Sigma_2}.
\end{equation}
Thus the condition that there exists a $\Sigma_2$-equivariant map $\conf{K}{2} \to \conf{\R^d}{2}$ is equivalent to the condition that $T_2\emb(K, \R^d)$ is non-empty.

The van Kampen obstruction is a cohomological obstruction to the existence of such a $\Sigma_2$-equivariant map. It is an element, which we denote $\vk{K}$, of the equivariant cohomology group $H^{d}_{\Sigma_2}(\conf{K}{2}; \Z[(-1)^d])$, where $\Z[(-1)^d]$ denotes the integers with the action of $\Sigma_2$ by $(-1)^d$. There are many ways to construct the element~$\vk{K}$. 
The original formulation of van Kampen \cite{vK} predated a formal definition of cohomology, and it was based on a geometric approach. Moreover, van Kampen's formulation concerned the case $2{\rm dim}(K)=d$. We denote the geometric version of the obstruction by $\svk{K}$. It is defined by counting intersections of non-adjacent cells. We give a homotopy-theoretic definition of the obstruction $\vk{K}$ in Section~\ref{sec: Constructing the obstruction}, and review van Kampen's geometric definition of $\svk{K}$ in Section~\ref{sec:Whitney towers}. The following theorem summarizes the relevant facts about the van Kampen obstruction
\begin{theorem}
    The homotopy-theoretic obstruction~$\vk{K}$ agrees with the geometric obstruction~$\svk{K}$. When $2\dim(K)=d$, $\vk{K}$ is a complete obstruction for  $T_2\emb(K, \R^d)$ to be non-empty. Furthermore, when $2\dim(K)=d\ne 4$, $\svk{K}$ (and therefore also $\vk{K}$) is a complete obstruction to $K$ being embeddable in $\R^d$
\end{theorem}
This theorem is classical, though we hope that our formulation helps place it in a wider context. The fact that the homotopy-theoretic and the geometric formulations of the obstructions agree is explained, for example,  in~\cite[Section 3]{Melikhov}. That the van Kampen obstruction is complete when  $2\dim(K)=d> 4$ follows from the validity of the Whitney trick \cite{Shapiro, Wu}; a modern treatment may be found in \cite{FKT}. For $1$-complexes in $\R^2$ this follows from the Kuratowski graph planarity criterion \cite{Kuratowski} and
the naturality of van Kampen’s obstruction under embeddings. That $\vk{K}$ is a complete obstruction to $T_2\emb(K, \R^d)$ to be non-empty follows from Lemma~\ref{lem: characterizations}. See also the discussion following the proof of the lemma.

\begin{remark}
Building on work of Haefliger \cite{Haefliger}, Weber \cite{Weber} extended the embeddability result to the ``metastable range'' of dimensions. More precisely, it is shown in \cite{Weber} that
given an $m$-dimensional simplicial complex $K$ and a $\Sigma_2$-equivariant map $f_2\co \conf{K}{2}\longrightarrow \conf{\R^d}{2}$ with $2d\geq 3(m+1)$, there exists a PL  embedding $f\co K\longrightarrow \R^d$ such that the induced map $f^2_{\Delta}$ is $\Sigma_2$-equivariantly homotopic to $f_2$. \end{remark}

By contrast to all the cases when $2\dim(K)=d\ne 4$, it was shown in \cite{FKT} that when $K$ is a $2$-dimensional complex, the existence of a $\Sigma_2$-equivariant map $\conf{K}{2} \to \conf{\R^4}{2}$ is insufficient for embeddability of $K$ in $\R^4$, and thus the van Kampen obstruction is incomplete. The underlying geometric reason, the failure of the Whitney trick in $4$ dimensions, is well-known. However, as in many other aspects of $4$-manifold topology, it is a non-trivial problem to formulate an invariant that captures this geometric fact. In this paper, as we discuss below, we formulate such an invariant in the context of $2$-complexes in $\R^4$.

To summarize, the obstruction $\vk{K}$ to lifting from $T_1$ to $T_2$, which is the same as the obstruction 
for the space \eqref{T2} to be non-empty, is precisely the van Kampen obstruction. 
The lifting problem to the next stage of the tower, $T_3\emb(K, \R^d)$, discussed in the next subsection, yields an embedding obstruction for $m$-complexes in $\R^d$ beyond the metastable range: for $2d< 3(m+1)$.

\subsection{The obstruction from ${\mathbf 3}$-point configuration spaces} \label{3 points}

Suppose $K$ is a finite-dimensional complex for which the van Kampen obstruction vanishes. Then there exists a $\Sigma_2$-equivariant~map 
\[
f_2\colon \conf{K}{2} \longrightarrow \conf{\R^d}{2}
\]
Our goal is to give an effective necessary condition for the existence of an embedding $f\colon K\hookrightarrow \R^d$ such that the deleted square $\deletedpower{f}{2}\colon \conf{K}{2}\to \conf{\R^d}{2}$ is equivariantly homotopic to $f_2$. There is a cubical diagram of configuration spaces, where the projection $p^i$ omits the $i$-th coordinate:

\begin{equation} \label{cube}
\begin{tikzcd}[row sep=scriptsize,column sep=scriptsize]
& \conf{X}{2}  \arrow[rr, "p^1"] \arrow[dd, "p^2", pos=0.2] & & X  \arrow[dd] \\ 
\conf{X}{3} \arrow[rr, crossing over, "p^1", pos=0.7] \arrow[dd, "p^3"] \arrow[ur, "p^2"] & & \conf{X}{2} \arrow[ur, "p^1"]  \\
 & X \arrow[rr] & & \{*\} \\ 
 \conf{X}{2} \arrow[rr, "p^1"] \arrow[ur, "p^2"] & & X \arrow[ur] \arrow[uu, crossing over, leftarrow, "p^2"', pos=0.8]\\
\end{tikzcd}
\end{equation}
Now suppose we have a topological embedding $f\colon K\hookrightarrow \R^d$. Such an embedding induces a map of cubical diagrams (\ref{cube}) for $K$ and $\R^d$. In the diagram for $\R^d$ the space $\conf{\R^d}{1}=\R^d$ is contractible, and (up to homotopy) the map of cubical diagrams may be replaced by a smaller diagram (\ref{Delta3}) below.
Denote by $p_X$ the canonical $\Sigma_3$-equivariant map
\[
\begin{array}{cccccccc}
p_X\colon &\conf{X}{3} & \longrightarrow & \conf{X}{2}&\times &\conf{X}{2} & \times & \conf{X}{2} \\[5pt]
& (x_1, x_2, x_3) &\mapsto & (x_1, x_2) &, & (x_2, x_3) &, & (x_3, x_1) 
\end{array}
\]

Then $f$ induces a commutative diagram
\begin{equation}  \label{Delta3} \begin{tikzcd}
\conf{K}{3} \arrow{r}{\deletedpower{f}{3}} \arrow{d}{ p_K} & \conf{\R^d}{3}\arrow{d}{P_{\R^d}} \\
\conf{K}{2}\times \conf{K}{2}\times\conf{K}{2}\arrow{r}{(\deletedpower{f}{2})^3}& \conf{\R^d}{2}\times\conf{\R^d}{2}\times \conf{\R^d}{2} 
\end{tikzcd}
\end{equation}
Therefore, given a $\Sigma_2$-equivariant map $f_2\colon\conf{K}{2}\longrightarrow\conf{\R^d}{2}$, a necessary condition for it being induced by an embedding, is that the lifting problem in the following diagram has a solution
\begin{equation} \label{Delta3prime} \begin{tikzcd}
 & & \conf{\R^d}{3} \arrow{d}{p_{\R^d}}  \\
\conf{K}{3}\arrow[rru, dashed, shift left=2]\arrow[r, "p_K"] &\conf{K}{2}^{\times 3} \arrow{r}{(f_2)^3} & \conf{\R^d}{2}^{\times 3}
\end{tikzcd}
\end{equation}
There exists a cohomological obstruction to the existence of a $\Sigma_3$-equivariant dashed arrow that makes the diagram commute up to homotopy. We denote this obstruction by $\obsfull{K}{f_2}$, or simply by $\obs{K}$ when the choice of $f_2$ is immaterial. It turns out to be an element of an equivariant cohomology group of $\conf{K}{3}$. More specifically,  $$\obs{K}\in H^{2d-2}_{\Sigma_3}\left(\conf{K}{3}; \Z[(-1)^{d-1}]\right).$$ See Section \ref{sec: Constructing the obstruction} for a detailed discussion.
In terms of the tower \eqref{eq: primitive}, $\obsfull{K}{f_2}$ is the primary obstruction for the path component of $f_2$ in $T_2\emb(K, \R^d)$ to be in the image of the map $T_3\emb(K, \R^d)\to T_2\emb(K, \R^d)$.

We will give several topological, geometric and algebraic interpretations of $\obs{K}$; its properties are summarized below, along with references to the sections in the text where they are established.

\begin{itemize}
\item
For $2$-complexes in $\R^4$,  $\obs{K}$ counts intersections of $K$ with the Whitney disks that arise from the vanishing of the van Kampen obstruction (Subsection~\ref{intro 2-complexes} below, and Sections \ref{sec:Whitney towers}, \ref{sec:equality}). 
\item  $\obsfull{K}{f_2}$ also admits  another geometric interpretation as the fundamental class of the subspace of points $(k_1, k_2, k_3)\in\conf{K}{3}$ for which the vectors $f_2(k_1, k_2)$, $f_2(k_2, k_3)$ and $f_2(k_3, k_1)$ are co-directed (Section \ref{sec: construction}). 
\item 
Lemma \ref{lem: o3 Arnold} interprets $\obs{K}$ as the kernel of the Arnold relation in cohomology of configuation spaces. 
\item 
This algebraic interpretation is used to verify that $\obs{K}$ detects non-embeddability of a family of examples in Section \ref{sec:Examples} with vanishing van Kampen's obstruction. 
    \end{itemize}

\subsection{${\mathbf 2}$-complexes in ${\mathbf \R^4}$: obstructions from intersections of Whitney disks} \label{intro 2-complexes}
We outline in more detail the geometric approach to embedding obstructions in terms of intersections of Whitney disks for simplicial $2$-complexes in $\R^4$.
In this case, as we recall in Section \ref{sec:Whitney towers}, the vanishing of the van Kampen obstruction implies that a general position map $f\colon K\longrightarrow \R^4$ may be found such that for any two non-adjacent $2$-simplices $\s_i, \s_j$ of $K$, the algebraic intersection number $f(\s_i)\cdot f(\s_j)$ is zero. In higher dimensions in this setup the Whitney trick enables one to find an actual embedding, cf. \cite[Theorem 3]{FKT}. In dimension $4$ one may still consider Whitney disks $W_{ij}$ pairing up the intersections points $f(\s_i)\cap f(\s_j)$ but the Whitney disks themselves have self-intersections and intersect other $2$-cells, see \cite[Section 1.4]{FQ}  and also Figure \ref{Whitney} in Section \ref{sec:Whitney towers} below. 

Our geometric obstruction $\gobs{K}$ is an element of the equivariant cohomology group $$H^{6}_{\Sigma_3}(\confs{K}{3}; \Z[(-1)]);$$
this is the same cohomology group as the one discussed above except that now 
$\confs{K}{3}$ denotes the {\em simplicial} configuration space, that is $K^{ 3}$ minus the simplicial diagonal consisting of products $\s_1\times \s_2\times \s_3$ of simplices where at least two of them have a vertex in common.
The obstruction is defined on the cochain level by sending a $6$-cell $\s_1\times \s_2\times \s_3$ (where each $\s_i$ is a $2$-simplex of $K$) to the sum of intersection numbers $W_{ij}\cdot f(\s_k)$ over distinct indices $i,j,k$; see Section \ref{sec:higher geometric} for details.
Informally, the obstruction may be thought of as measuring the failure of the Whitney trick in $4$ dimensions. In the special case of disks in the $4$-ball with a prescribed boundary -- a link in the $3$-sphere $\partial D^4$ -- the analogous invariant equals the Milnor $\bar\mu$-invariant \cite{Milnor} of a $3$-component link, sometimes referred to as the triple linking number. For {\em knots}, a similar expression measuring self-intersections of a disk in $D^4$ equals the Arf invarint, see Remark \ref{rem: triple intersection} and references therein.

The obstruction $\gobs{K}$ depends on the map $f\colon K\longrightarrow \R^4$ and also on Whitney disks $W_{ij}$. In fact, we show in Lemma \ref{Whitney-config} that a choice of Whitney disks determines a $\Sigma_2$-equivariant map $\confs{K}{2}\longrightarrow \conf{\R^4}{2}$; in this sense the geometric setup is parallel to the homotopy-theoretic context discussed above. 

The following theorem summarizes some of our results about the obstructions $\obs{K}$ and $\gobs{K}$.
\begin{theorem}
    The obstructions $\obs{K}$ and $\gobs{K}$ are in fact equal (Theorem~\ref{thm: obstructions coincide}). These obstructions detect some non-embeddable complexes for which $\vk{K}$ vanishes (Section~\ref{sec:Examples}). The obstruction~$\obs{K}$ is a complete obstruction for lifting from $T_2\emb(K, \R^d)$ to $T_3\emb(K, \R^d)$ if $3m=2(d-1)$ (Proposition~\ref{prop: O3}).
\end{theorem}

The proof that $\obs{K}$ and $\gobs{K}$ coincide proceeds by localizing the problem, using subdivisions of the $2$-complex $K$ and splittings of Whitney disks, and identifying the Whitehead product in the homotopy fiber of the map 
$p_{\R^4}\colon \conf{\R^d}{3}\longrightarrow \conf{\R^d}{2}^{\times 3}$ in the notation of 
(\ref{Delta3prime}) using the Pontryagin-Thom construction; see Section \ref {sec:equality} for details.

\subsection{Lift of the obstructions from cohomology to framed cobordism}
In addition to constructing the cohomological obstructions, we define, in Sections~\ref{sec: Constructing the obstruction} and~\ref{sec: construction}, a lift of $\obs{K}$ which we denote $\obsfr{K}$. We hasten to add that an analogue of $\obsfr{K}$ in the context of smooth embeddings was studied by Munson~\cite{Munson}. Just as $\obs{K}$ is an element of the equivariant cohomology of $\conf{K}{3}$, $\obsfr{K}$ is an object of a suitable equivariant framed cobordism group (a.k.a stable cohomotopy group) of $\conf{K}{3}$. The Hurewicz homomorphism from stable homotopy to homology takes $\obsfr{K}$ to $\obs{K}$.

The class $\obsfr{K}$ is a complete obstruction to the lifting problem~\eqref{Delta3prime} whenever $\dim(K)\le d-2$. By contrast, $\obs{K}$ is a complete obstruction to the same lifting problem when $\dim(K)\le \frac{2}{3} d - \frac{2}{3}$. Thus $\obsfr{K}$ is a stronger invariant than $\obs{K}$. But when $d=4$ the difference is immaterial. All this is explained in Section~\ref{sec: Constructing the obstruction}. In Section~\ref{sec: construction} we give an explicit description of $\obsfr{K}$ in terms of a classifying map. As a consequence, we obtain in Section~\ref{geometric interpretation} {\it another} geometric interpretation of $\obsfull{K}{f_2}$ as the cohomology class represented by triples $(k_1, k_2, k_3)\in \conf{K}{3}$ for which $f_2(k_1, k_2)=f_2(k_2, k_3)=f_2(k_3, k_1)$.

In Section~\ref{the tower} we describe the general obstruction $\obsn{K}$ as an element in the equivariant cohomology of $\conf{K}{n}$ with coefficients in the cyclic Lie representation of $\Sigma_n$. We also give a conjectural description of $\obsnfr{K}$ in terms of equivariant stable cohomotopy of $\conf{K}{n}$ with coefficients in a space of trees that realizes the Lie representation.

\subsection{Outline of the paper}

Section \ref{sec: Constructing the obstruction} starts with the discussion of van Kampen's obstruction and its properties, and proceeds to define the new obstruction $\obs{K}$. We also describe a lift of $\obs{K}$ to an equivariant framed cobordism class $\obsfr{K}$, which is defined in terms of a {\em classifying map} $\conf{\R^d}{2}^3 \to \widehat \Omega^2\Omega^\infty\Sigma^\infty\widehat S^{2d}$. An explicit construction of $\obsfr{K}$ is deferred to Section \ref{sec: construction}.
Section \ref{sec:Whitney towers} starts by recalling the geometric definition of van Kampen's obstruction and basic operations on Whitney disks in dimension $4$.  Lemma \ref{Whitney-config} establishes a relation between Whitney disks and maps of configuration spaces, which illustrates a key connection between geometry and homotopy theory explored in this paper. Section \ref{sec:higher geometric} defines $\gobs{K}$ and analyzes its properties.
The construction of higher obstructions $\gobsn{K}$, in terms of intersection theory of Whitney towers of Schneiderman-Teichner, is outlined in Section \ref{sec: Whitney towers}.
The main result of Section \ref{sec:equality}, Theorem \ref{thm: obstructions coincide}, relates the obstructions $\obs{K}$ and $\gobs{K}$.
In Section~\ref{sec: construction} we construct the lift $\obsfr{K}$ of $\obs{K}$ and use it to give another topological interpretation of $\obs{K}$ in terms of the set of points satisfying a certain collinearity condition.
Section \ref{sec:Examples} recalls the examples of \cite{FKT} and shows that the obstruction $\obs{K}$ detects their non-embeddability in $\R^4$. In the process of doing this, $\obs{K}$ is related to the Arnold class in Lemma \ref{lem: o3 Arnold}.
Section \ref{the tower} gives the construction of the tower $T_n\emb(K, \R^n)$, formulates the higher obstructions $\obsn{K}$, and discusses their properties including a conjectural framed cobordism lift.
We conclude by stating a number of questions and conjectures motivated by our results in Section \ref{sec: Questions}.

{\bf Acknolwedgements}. We would like to thank Danica Kosanovi\'{c}, Rob Schneiderman and Peter Teichner for many discussions on the embedding calculus and Whitney towers.

We also thank the referees for reading the paper carefully and making many useful suggestions.

A substantial part of this project was completed while the authors visited EPFL, Lausanne, as part of the Bernoulli Brainstorm program in July 2019. We are grateful to the Bernoulli Center for warm hospitality and support.

GA was supported in part by Swedish Research Council, grant number 2016-05440.
VK was supported in part by the Miller Institute for Basic Research in Science at UC Berkeley, Simons Foundation fellowship 608604, and NSF Grant DMS-2105467.

\section{The first and second cohomological obstructions to embedding} \label{sec: Constructing the obstruction}
In section~\ref{subsection: van Kampen} we review the classical van Kampen obstruction $\vk{K}$ from a homotopy-theoretic perspective. Then in section \ref{subsection: secondary} we will introduce our main construction: a higher cohomological obstruction $\obs{K}$, defined when $\vk{K}=0$, and depending on a choice of a $\Sigma_2$-equivariant map $f_2\colon \conf{K}{2}\to\conf{\mathbb R^d}{2}$. Finally in section~\ref{subsection: cobordism} we discuss certain refinements $\vkfr{K}$ and $\obsfr{K}$ of $\vk{K}$ and $\obs{K}$ respectively into classes that reside in framed cobordism rather than cohomology. 
\subsection{The van Kampen obstruction}\label{subsection: van Kampen}

Let $K$ continue denoting an $m$-dimensional CW (or simplicial) complex. We are interested in the question whether there exists a topological (or PL) embedding of $K$ in $\mathbb R^d$. As we saw in the introduction, a necessary condition for the existence of an embedding is the existence of a $\Sigma_2$-equivariant map $f_2\colon \conf{K}{2}\to \conf{\mathbb R^d}{2}$. Or, equivalently, a $\Sigma_2$-equivariant map $\conf{K}{2}\to \widetilde S^{d-1}$, 
where $\widetilde S^{d-1}$ denotes the sphere with the antipodal action of $\Sigma_2$. Recall that there is a $\Sigma_2$-equivariant homotopy equivalence $\conf{\R^d}{2}\xrightarrow{\simeq} \widetilde S^{d-1}$ that sends $(x_1, x_2)$ to $\frac{x_2-x_1}{|x_2-x_1|}$. We will occasionally switch back and forth between these spaces.

There is a well-known homotopical/cohomological obstruction to the existence of a $\Sigma_2$-equivariant map $f_2\colon \conf{K}{2}\to \conf{\R^d}{2}$, which we will now review. Let $\widehat\R^d$ denote the $d$-dimensional Euclidean space on which $\Sigma_2$ acts by multiplication by $-1$.
\begin{notation}
Suppose $G$ is a group acting on a space $X$. We let $X_G$ and $X^G$ denote the orbit space and the fixed point space of $X$, respectively. If $X$ and $Y$ are two spaces with an action of $G$, then $G$ acts on the mapping space $\map(X, Y)$ by conjugation. In this case the fixed point space $\map(X, Y)^G$ is the space of equivariant maps from $X$ to $Y$. Also, we sometimes use the notation $X\times_G Y$ to denote the orbit of $X\times Y$ by the diagonal action. 
\end{notation}
Notice that $\Sigma_2$ acts on the trivial vector bundle
$$(K\times K\setminus K) \times \widehat \R^d \to K\times K \setminus K.$$
Passing to orbit spaces, one obtains the vector bundle \begin{equation}\label{eq: fibration}
(K\times K\setminus K)\times_{\Sigma_2}\widehat \R^d \to (K\times K\setminus K)_{\Sigma_2}
\end{equation}

Let $\widehat S^d$ be the one-point compactification of $\widehat \R^d$, considered as a space with an action of $\Sigma_2$. Equivalently, $\widehat S^d$ is the unreduced suspension of $\widetilde S^{d-1}$. Note that $\widehat S^d$ has two points fixed by $\Sigma_2$, corresponding to $0$ and $\infty$ in the compactificaton of $\widehat \R^d$. By convention, $\infty$ is the basepoint of $\widehat S^d$. The following elementary lemma gives several conditions for the existence of a $\Sigma_2$-map $K\times K\setminus K \to \widetilde S^{d-1}$.
\begin{lemma} \label{lem: characterizations}
Conditions~\eqref{map} and~\eqref{section} below are equivalent
\begin{enumerate}
    \item \label{map} There exists a $\Sigma_2$-equivariant map $K\times K\setminus K \to \widetilde S^{d-1}$.
    \item \label{section} The vector bundle~\eqref{eq: fibration} has a nowhere vanishing section.
\setcounter{characterizations}{\value{enumi}}
\end{enumerate}
Furthermore, conditions~\eqref{map} and~\eqref{section} above imply conditions~\eqref{homotopic} and~\eqref{cobordism} below. Under the assumption $d\ge \dim(K)+2$, the conditions (1)-(4) are equivalent.
\begin{enumerate}
\setcounter{enumi}{\value{characterizations}}
\item \label{homotopic} The constant map $K\times K\setminus K \to \widehat S^d$ that sends $K\times K\setminus K$ to $0$ is $\Sigma_2$-equivariantly null-homotopic. By this we mean that it is equivariantly homotopic to the constant map that sends $K\times K\setminus K$ to $\infty$. 
\item\label{cobordism} The constant map $K\times K\setminus K \to \Omega^\infty \Sigma^\infty\widehat S^d$ which is the map of part~\ref{homotopic} followed by the suspension map $\widehat S^d\to \Omega^\infty \Sigma^\infty \widehat S^d$ is $\Sigma_2$-equivariantly null-homotopic.
\end{enumerate}
\end{lemma}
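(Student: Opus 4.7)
The implications (1) $\Leftrightarrow$ (2) and (1) $\Rightarrow$ (3) $\Rightarrow$ (4) are largely formal. For (1) $\Leftrightarrow$ (2), a section of the vector bundle~\eqref{eq: fibration} is the same data as a $\Sigma_2$-equivariant map $K\times K\setminus K\to \widehat\R^d$; it is nowhere vanishing iff the map avoids $0$, which after normalization $v\mapsto v/|v|$ produces an equivariant map to $\widetilde S^{d-1}$, and conversely any such map can be composed with the inclusion $\widetilde S^{d-1}\hookrightarrow \widehat\R^d\setminus\{0\}$. For (1) $\Rightarrow$ (3), given an equivariant $g\colon K\times K\setminus K\to \widetilde S^{d-1}$, I would set $H(x,t) = \lambda(t)\cdot g(x)$ for a continuous monotone $\lambda\colon I\to [0,\infty]$ with $\lambda(0)=0$ and $\lambda(1)=\infty$; this takes values in $\widehat S^d$, satisfies $H_0 = 0$ and $H_1 = \infty$, and is equivariant because the $\Sigma_2$-action on $\widehat\R^d$ is linear and so commutes with scaling. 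Finally, (3) $\Rightarrow$ (4) comes from composing with the $\Sigma_2$-equivariant unit $\widehat S^d \to \Omega^\infty\Sigma^\infty\widehat S^d$.

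The substantive step is (4) $\Rightarrow$ (1) under $d \ge m+2$, where $m = \dim K$. The plan is to use equivariant obstruction theory. Since $\Sigma_2$ acts freely on $K\times K\setminus K$ (its fixed set, the diagonal, having been removed), an equivariant map to $\widetilde S^{d-1}$ is the same as a section of the associated $S^{d-1}$-bundle over the orbit space $B = (K\times K\setminus K)/\Sigma_2$, a CW complex of dimension at most $2m$. Obstructions to constructing such a section cell by cell lie in twisted cohomology groups $H^{k+1}(B;\pi_k(S^{d-1}))$ for $d-1 \le k \le 2m-1$, while the null-homotopy sought in~(4) is obstructed by the analogous sequence with the stable homotopy groups $\pi_k^s(S^{d-1})$ as coefficients.

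The hypothesis $d \ge m+2$ is equivalent to $2m-1 \le 2d-4$, which places every obstruction degree inside Freudenthal's stable range: $\pi_k(S^{d-1}) \cong \pi_k^s(S^{d-1})$ whenever $k\le 2d-4$, in particular for all $k$ in the window $d-1 \le k \le 2m-1$. Consequently the unstable and stable obstruction groups coincide, and by naturality of the Freudenthal stabilization the unstable and stable obstruction \emph{classes} also agree, so vanishing of the stable one forces vanishing of the unstable one. The hard part, I expect, is exactly this identification of classes (not merely of groups) across all obstruction stages with the correct twisted coefficients; once that bookkeeping is handled, condition~(4) directly yields the equivariant map realizing~(1).
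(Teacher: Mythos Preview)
Your treatment of the easy implications (1)$\Leftrightarrow$(2), (1)$\Rightarrow$(3)$\Rightarrow$(4) is correct and essentially matches the paper's.

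For the converse (4)$\Rightarrow$(1) under $d\ge\dim K+2$, the paper takes a more direct route than your cell-by-cell obstruction-theoretic one. Rather than running two parallel obstruction sequences and arguing that the classes match stage by stage, the paper introduces the path space $\widetilde\Omega\widehat S^d$ of paths in $\widehat S^d$ from $\infty$ to $0$, observes that a $\Sigma_2$-equivariant map $K\times K\setminus K\to\widetilde\Omega\widehat S^d$ is precisely a null-homotopy as in~(3), and then notes that the canonical map $\widetilde S^{d-1}\to\widetilde\Omega\widehat S^d$ is $(2d-3)$-connected by Blakers--Massey. This immediately gives that the induced map of equivariant mapping spaces is $(2d-2\dim K-3)$-connected, hence a $\pi_0$-bijection once $d\ge\dim K+2$, establishing (3)$\Rightarrow$(1). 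The step (4)$\Rightarrow$(3) is handled separately via Freudenthal: $\widehat S^d\to\Omega^\infty\Sigma^\infty\widehat S^d$ is $(2d-1)$-connected, which suffices already when $d\ge\dim K+1$.

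This packaging absorbs exactly the ``bookkeeping'' you flag as the hard part. Your framing as two separate obstruction problems whose classes must be matched is slightly awkward: the cleaner formulation is a single \emph{lifting} problem along the highly connected comparison map $\widetilde S^{d-1}\to\widetilde\Omega(\Omega^\infty\Sigma^\infty\widehat S^d)$, whose obstructions live in cohomology with coefficients in the \emph{relative} homotopy groups of that map---and those vanish in the relevant range. Once phrased this way, your argument and the paper's are really the same; the paper just states the connectivity and reads off the $\pi_0$-bijection without unpacking the obstruction tower.
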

\begin{proof}
The vector bundle~\eqref{eq: fibration} has a nowhere vanishing section if and only if the sphere bundle
\[
(K\times K\setminus K)\times_{\Sigma_2}\widetilde S^{d-1} \to (K\times K\setminus K)_{\Sigma_2}
\]
has a section. It is well-known that sections of this bundle are in bijective correspondence with $\Sigma_2$-equivariant maps $K\times K\setminus K \to \widetilde S^{d-1}$~\cite[Proposition 8.1.3]{tD}, which is why~\eqref{map} and~\eqref{section} are equivalent.

Suppose there is a $\Sigma_2$-equivariant map $K\times K\setminus K \to \widetilde S^{d-1}$. It induces  $\Sigma_2$-equivariant maps 
\[
(K\times K\setminus K) \times I \to \widetilde S^{d-1} \times I\to \widehat S^d
\]
where the latter map is the obvious quotient. This composite map is a null homotopy of the constant zero map $K\times K\setminus K \to \widehat S^d$. This is why~\eqref{map} implies~\eqref{homotopic}. It is obvious that~\eqref{homotopic} implies~\eqref{cobordism}.

For the reverse implication in the last statement of the lemma, let $\widetilde\Omega \widehat S^d$ be the space of paths in $\widehat S^d$ from the basepoint $\infty$ to $0$. There is a canonical $\Sigma_2$-equivariant map $\widetilde S^{d-1}\to \widetilde\Omega \widehat S^d$. It follows from the Blakers-Massey theorem that this map is $2d-3$-connected. It follows that the induced map of mapping spaces
\[
\map(K\times K\setminus K, \widetilde S^{d-1})^{\Sigma_2}\to \map(K\times K\setminus K, \widetilde\Omega \widehat S^d)^{\Sigma_2}
\]
is $2d-2\dim(K)-3$-connected. So if $d-\dim(K)\ge 2$ this map is at least $1$-connected, and therefore induces a bijection on $\pi_0$. But a $\Sigma_2$-equivariant map $K\times K\setminus K\to \widetilde\Omega \widehat S^d$ is the same thing as a $\Sigma_2$-equivariant null homotopy of the constant zero map from $K\times K\setminus K$ to $\widehat S^d$. Thus, under the assumption $d\ge \dim(K)+2$, condition~\eqref{homotopic} implies~\eqref{map}. 

Finally, the map $\widehat S^d\to \Omega^\infty\Sigma^\infty\widehat S^d$ is $2d-1$-connected by the Freudenthal suspension theorem. It follows that~\eqref{cobordism} implies~\eqref{homotopic} when $d\ge \dim(K)+1$, which is a weaker condition than stated in the lemma.
\end{proof}
Lemma~\ref{lem: characterizations} points to several (equivalent) ways to define a cohomological obstruction to the existence of a $\Sigma_2$-equivariant map $K\times K\setminus K\to \widetilde S^{d-1}$. To begin with, the map given in part~\eqref{cobordism} of the lemma can be interpreted as an element of an equivariant stable cohomotopy group, or equivalently an equivariant framed cobordism group of $K\times K\setminus K$. We denote this element by $\vkfr{K}$. Lemma~\ref{lem: characterizations} says that $\vkfr{K}$ is a complete obstruction to the existence of a $\Sigma_2$-equivariant map $K\times K\to \widetilde S^{d-1}$ when $\dim(K)+2\le d$. 

The natural map of spectra $\Sigma^\infty S^0\to H\mathbb Z$ induces a $\Sigma_2$-equivariant map 
\begin{equation}\label{eq: Hurewicz}
\Omega^\infty \Sigma^\infty \widehat S^d\to \Omega^\infty H\mathbb Z \wedge \widehat S^d\simeq K(\mathbb Z[(-1)^d], d).
\end{equation}
Here $K(\mathbb Z[(-1)^d], d)$ denotes the Eilenberg-Mac Lane space with an action of $\Sigma_2$, that on the non-trivial homotopy group realizes the representation $Z[(-1)^d]$, which is the trivial representation if $d$ is even and the sign representation if $d$ is odd. Any two such Eilenberg-Mac Lane spaces are weakly equivariantly equivalent.

Composing the maps in Lemma~\ref{lem: characterizations}\eqref{cobordism} and~\eqref{eq: Hurewicz}, we obtain a $\Sigma_2$-equivariant map
\[
K\times K\setminus K \to K(\mathbb Z[(-1)^d], d).
\]
This map defines an element in the equivariant cohomology group $\vk{K}\in H^d_{\Sigma_2}(K\times K\setminus K; \mathbb Z[(-1)^d])$. This is the classical van Kampen obstruction. It is the same as the Euler class of the vector bundle~\eqref{eq: fibration}. The classical van Kampen obstruction is a complete obstruction to the existence of a $\Sigma_2$-equivariant map $K\times K \setminus K\to \widetilde S^{d-1}$ when $d=2\dim(K)$. We are especially interested in the case when $4=d=2\dim(K)=\dim(K)+2$. In this case, the cohomological obstruction is a complete obstruction to the existence of an equivariant map (but not to the existence of an embedding $K\hookrightarrow \mathbb R^d$), and using the framed cobordism version does not add information. But in other situations $\vkfr{K}$ contains more information than $\vk{K}$.
\begin{remark}
The framed cobordism viewpoint points to a geometric interpretation of the van Kampen obstruction. It is perhaps even more convincing in the context of smooth manifolds. In that context, the analogue of the van Kampen obstruction is the obstruction for lifting from the first to the second stage of the Goodwillie-Weiss tower. In other words, it is the first obstruction to an immersion of a smooth manifold $M$ into $\mathbb R^d$ being regularly homotopic to an embedding. This obstruction is an element in the relative equivariant cobordism group $\Omega_{fr}^{\widehat {\mathbb R}^d}(M\times M, M)$, and it can be interpreted as the framed cobordism class of the double points manifold of an immersion. This is explained, for example, in the introduction to~\cite{Munson}. In the case of topological embeddings of a $2$-dimensional complex in $\mathbb R^4$, the van Kampen obstruction also can be interepreted as a double points obstruction. Of course this interpretation is well-known, and indeed it was how van Kampen thought about it. We review this in Section~\ref{sec:van Kampen}. 
\end{remark}

\subsection{The secondary obstruction} \label{subsection: secondary} Now let us consider the next step. Suppose we have a finite complex $K$ for which $\vk{K}$ (or $\vkfr{K}$) vanishes, and suppose we choose a $\Sigma_2$-equivariant map $f_2\colon\conf{K}{2}\to\conf{\mathbb R^d}{2}$. We want to know if $f_2$ is $\Sigma_2$-equivariantly homotopic to the deleted square of some embedding $f\colon K\hookrightarrow \mathbb R^d$.

Suppose $W$ is a space with an action of $\Sigma_2$. Then we endow the space $W\times W\times W$ with an action of $\Sigma_3$ via the homeomorphism $W\times W\times W\cong \map_{\Sigma_2}(\Sigma_3, W)$. In particular, the spaces $\conf{X}{2}^3$ (for any space $X$) and $(\widetilde S^{d-1})^3$ are equipped with a natural action of $\Sigma_3$ in this way.

For any space $X$, a $\Sigma_3$-equivariant map $\conf{X}{3}\to \conf{X}{2}^3$ is the same thing as a $\Sigma_2$-equivariant map $\conf{X}{3}\to \conf{X}{2}$, where $\Sigma_2\subset \Sigma_3$ is identified, as usual, with the subgroup permuting $1, 2$. There is an obvious $\Sigma_2$-equivariant projection map $\conf{X}{3}\to \conf{X}{2}$ which sends $(x_1, x_2, x_3)$ to $(x_1, x_2)$. This map induces a canonical $\Sigma_3$-equivariant map
\begin{equation}\label{eq: three-two}
\begin{array}{ccccccccc}
p_X &\colon &\conf{X}{3} & \to & \conf{X}{2}&\times &\conf{X}{2} & \times & \conf{X}{2} \\[5pt]
& &(x_1, x_2, x_3) &\mapsto & (x_1, x_2)&,& (x_2, x_3)&,& (x_3, x_1)
\end{array}
\end{equation}
This map is natural with respect to embeddings of $X$. Therefore, an embedding $f\colon K\hookrightarrow \mathbb R^d$ induces a commutative square as we saw in the introduction~\eqref{Delta3}. Conversely, if $f_2\colon \conf{K}{2}\to \conf{\R^d}{2}$ is a $\Sigma_2$-equivariant map, then a necessary condition for $f_2$ to be equivarintly homotopic to the deleted square of an embedding is that the homotopy lifting problem in the following diagram has a $\Sigma_3$-equivariant solution
\begin{equation}\label{fig: lifting problem}  \begin{tikzcd}[column sep=5em]
  & \conf{\R^d}{3} \arrow{d}{p_{\R^d}}  \\
\conf{K}{3}\arrow[ru, dashed, shift left=2]\arrow[r, "(f_2)^3\circ p_K"]  & \conf{\R^d}{2}^{\times 3}
\end{tikzcd}
\end{equation}

At this point we want to bring obstruction theory into play. For this, we need to examine the map $p_{\R^d}\colon \conf{\R^d}{3}\to \conf{\R^d}{2}^3$ a little more closely. 

To describe the effect of the map $p_{\R^d}$ in homology, let us recall some facts about the homology of configuration spaces. Recall that there is an equivalence $\conf{\R^d}{2}\simeq S^{d-1}$. Let $u\in H^{d-1}(\conf{\mathbb R^d}{2})$ be a fixed generator. 
\begin{definition} \label{def: Arnold}
The {\it Arnold class} is the following cohomological element.
\[ u\otimes u\otimes 1+(-1)^{d-1} u\otimes 1\otimes u + 1\otimes u\otimes u  \in H^{2d-2}(\conf{\mathbb R^d}{2}\times \conf{\mathbb R^d}{2}\times \conf{\mathbb R^d}{2}).\]
\end{definition}
\begin{remark}\label{remark: arnold invariant}
Notice that the group $\Sigma_3$ acts by $(-1)^{d-1}$ on the Arnold class. That is, even permutations take the Arnold class to itself, and odd permutations multiply it by $(-1)^{d-1}$. This means that the Arnold class is an element of the invariant cohomology group
\[
H^{2d-2}\left(\conf{\mathbb R^d}{2}\times \conf{\mathbb R^d}{2}\times \conf{\mathbb R^d}{2}; \Z[(-1)^{d-1}]\right)^{\Sigma_3}
\]
\end{remark}
The following lemma is well-known.
\begin{lemma}\label{lemma: arnold relation}
\[
p_{\R^d}\colon \conf{\R^d}{3}\to \conf{\R^d}{2}^3
\]
is surjective in cohomology, and its kernel in cohomology is the ideal generated by the Arnold class. 
\end{lemma}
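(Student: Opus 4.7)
The plan is to reduce the claim to a direct calculation using the classical description of the cohomology ring $H^*(\conf{\R^d}{n};\Z)$ due to Arnold (for $d=2$) and F.~Cohen. By the K\"unneth formula, $H^*(\conf{\R^d}{2}^{\times 3})$ is the exterior algebra $\Lambda(u_1,u_2,u_3)$, where $u_i$ is the pullback of the standard generator $u\in H^{d-1}(\conf{\R^d}{2})$ from the $i$-th factor. On the other hand, $H^*(\conf{\R^d}{3})$ is generated in degree $d-1$ by classes $a_{ij}$ for $i\neq j$ in $\{1,2,3\}$, subject to $a_{ij}=(-1)^d a_{ji}$, $a_{ij}^2=0$, and the single Arnold relation $a_{12}a_{23}+a_{23}a_{31}+a_{31}a_{12}=0$. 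In particular the Poincar\'e polynomials are $(1+t^{d-1})^3$ on the source and $1+3t^{d-1}+2t^{2(d-1)}$ on the target.

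First I would identify $p_{\R^d}^*$ on generators. Composing $p_{\R^d}$ with projection onto the $i$-th factor of $\conf{\R^d}{2}^{\times 3}$ gives one of the three ``forget a point'' maps $\conf{\R^d}{3}\to\conf{\R^d}{2}$, and by the standard normalization of the classes $a_{ij}$ as pullbacks of $u$ along such forgetful maps this yields $p_{\R^d}^*(u_1)=a_{12}$, $p_{\R^d}^*(u_2)=a_{23}$, $p_{\R^d}^*(u_3)=a_{31}$. Since these elements generate $H^*(\conf{\R^d}{3})$ as a ring, $p_{\R^d}^*$ is surjective. Moreover, applying $p_{\R^d}^*$ to the Arnold class $A$ produces $a_{12}a_{23}+(-1)^{d-1}a_{12}a_{31}+a_{23}a_{31}$, which, after using the graded-commutativity identity $a_{31}a_{12}=(-1)^{d-1}a_{12}a_{31}$, is exactly the Arnold relation and hence vanishes. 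Thus the ideal $(A)$ is contained in $\ker p_{\R^d}^*$.

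For the reverse inclusion I would proceed degree by degree, using the rank counts above. In degrees $0$ and $d-1$ the map is an isomorphism, so the kernel is trivial. In degree $2(d-1)$ the source has rank $3$ with basis $\{u_1u_2,u_1u_3,u_2u_3\}$ while the target has rank $2$, so $\ker p_{\R^d}^*$ has rank $1$ in this degree and must coincide with $\Z\cdot A$. In degree $3(d-1)$ the source is $\Z\cdot u_1u_2u_3$ while the target vanishes, since $a_{12}a_{23}a_{31}=0$ upon multiplying the Arnold relation by $a_{12}$ and invoking $a_{12}^2=0$; a short exterior-algebra computation using $u_i^2=0$ shows $u_iA=u_1u_2u_3$ for each $i$, so $u_1u_2u_3\in(A)$. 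All higher degrees vanish on both sides. Assembling these cases yields $\ker p_{\R^d}^*=(A)$. The only real subtlety is keeping track of the sign conventions dictated by the $\Sigma_3$-equivariance noted in Remark~\ref{remark: arnold invariant}.
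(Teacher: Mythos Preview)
Your argument is correct. The paper does not give its own proof of this lemma; it simply records it as well-known and cites Arnold (for $d=2$) and Cohen--Taylor for the general case. What you have written is essentially the standard derivation from the Arnold/Cohen presentation of $H^*(\conf{\R^d}{3})$: identify $p_{\R^d}^*$ on the K\"unneth generators, observe that the images $a_{12},a_{23},a_{31}$ generate the target ring (hence surjectivity), verify the Arnold class maps to the Arnold relation, and finish with the degree-by-degree rank count. The only caveat is that you are taking the presentation of $H^*(\conf{\R^d}{3})$ as input, which is exactly what the cited references supply---so your proof is really an unpacking of those references rather than an independent argument, but that is entirely appropriate here.
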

We refer to the statement of this lemma as the Arnold relation. The original reference is~\cite{Arnold}, where it is proved for configuration spaces in $\mathbb R^2$. The general result is proved in~\cite[Lemma 1.3 and Proposition 1.4]{CT}).
The following corollary is an an easy consequence of the lemma, and is also well-known. Let us recall once again that $\conf{\R^d}{2}$ is $\Sigma_2$-equivariantly equivalent to $S^{d-1}$ with the antipodal action. The possible sign representation $\Z[(-1)^d]$ in the statement below arises from the action of $\Sigma_2$ on $H_{d-1}(S^{d-1})$.
\begin{corollary}\label{corollary: homology}
The map $p_{\R^d}$ is $2d-3$-connected, and moreover it induces an isomorphism in homology and cohomology in degrees up to and including $2d-3$. In degree $2d-2$ there is an isomorphism of abelian groups $H_{2d-2}(\conf{\R^d}{3})\cong \Z^2$ and an isomorphism of $\Sigma_3$-modules $$H_{2d-2}(\conf{\R^d}{2}^3)\cong \Z[\Sigma_3]\otimes_{\Z[\Sigma_2]} \Z[(-1)^{d-1}].$$ Moreover, the homomorphism in $H_{2d-2}$ induced by $p_{\R^d}$ fits in a short exact sequence of $\Sigma_3$-modules
\[
0\to H_{2d-2}(\conf{\R^d}{3})\to H_{2d-2}(\conf{R^d}{2}^3) \to Z[(-1)^{d-1}] \to 0
\]
where the second homomorphism can be identified with the canonical surjection of $\Sigma_3$-modules
\[
\Z[\Sigma_3]\otimes_{\Z[\Sigma_2]} \Z[(-1)^{d-1}]\to \Z[(-1)^{d-1}].
\]
\end{corollary}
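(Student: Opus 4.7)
The plan is to deduce the corollary from Lemma \ref{lemma: arnold relation} together with the K\"unneth formula, universal coefficients, the homological Whitehead theorem, and a direct check of the $\Sigma_3$-module structure in degree $2d-2$. Using the equivariant equivalence $\conf{\R^d}{2}\simeq \widetilde S^{d-1}$ and K\"unneth, one obtains $H^*(\conf{\R^d}{2}^3) \cong H^*(S^{d-1})^{\otimes 3}$, free abelian and concentrated in degrees $0, d-1, 2d-2, 3d-3$ with ranks $1, 3, 3, 1$. By Lemma \ref{lemma: arnold relation}, $p_{\R^d}^*$ is surjective with kernel the ideal generated by the Arnold class in degree $2d-2$, so $p_{\R^d}^*$ is an isomorphism in every degree $i < 2d-2$. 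Since all groups in play are finitely generated free abelian, universal coefficients transfers this to an isomorphism in integral homology in the same range.

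For the connectivity statement, both $\conf{\R^d}{3}$ and $\conf{\R^d}{2}^3$ are simply connected when $d\geq 3$, so the homological Whitehead theorem upgrades the $H_i$-isomorphism for $i\leq 2d-3$ to $(2d-3)$-connectedness of $p_{\R^d}$. The case $d=2$ would be handled separately: $1$-connectedness follows from the surjectivity of the pairwise linking number map $\pi_1(\conf{\R^2}{3})\to \Z^3$.

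In degree $2d-2$, the K\"unneth basis of $H^{2d-2}(\conf{\R^d}{2}^3)\cong\Z^3$ consists of the three monomials $u\otimes u\otimes 1$, $u\otimes 1\otimes u$, $1\otimes u\otimes u$. Since the Arnold class has coefficients $\pm 1$ in this basis, it is primitive, so its $\Z$-span is a direct summand and $H^{2d-2}(\conf{\R^d}{3})\cong\Z^2$. Dualizing the short exact sequence $0\to\Z\langle\text{Arnold}\rangle\to H^{2d-2}(\conf{\R^d}{2}^3)\to H^{2d-2}(\conf{\R^d}{3})\to 0$ of free abelian groups yields the desired homological short exact sequence, with $H_{2d-2}(\conf{\R^d}{3})\cong\Z^2$.

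The main obstacle is the $\Sigma_3$-equivariant identification. I would fix the basis vector $1\otimes u\otimes u$, whose pointwise stabilizer in $\Sigma_3$ is the $\Sigma_2$ swapping the two tensor slots in which $u$ lives, and compute directly that this $\Sigma_2$ acts by $(-1)^{d-1}$: the Koszul sign from transposing two $(d-1)$-dimensional classes contributes $(-1)^{(d-1)^2}=(-1)^{d-1}$, while the antipodal action on each of the two swapped copies of $S^{d-1}$ contributes $(-1)^d\cdot(-1)^d=1$. Since $\Sigma_3$ permutes the three basis vectors transitively, this identifies $H_{2d-2}(\conf{\R^d}{2}^3)$ with the induced representation $\Z[\Sigma_3]\otimes_{\Z[\Sigma_2]}\Z[(-1)^{d-1}]$ by the standard characterization of induced representations. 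Finally, since the Arnold class transforms by $(-1)^{d-1}$ under $\Sigma_3$ by Remark \ref{remark: arnold invariant}, the quotient map must be the canonical surjection onto $\Z[(-1)^{d-1}]$, as up to scalar this is the unique nonzero $\Sigma_3$-linear map from the induced representation to $\Z[(-1)^{d-1}]$.
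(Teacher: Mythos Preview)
Your proposal is correct and is precisely the natural unpacking of what the paper asserts without proof (the paper simply says the corollary ``is an easy consequence of the lemma, and is also well-known''). Your route---K\"unneth plus Lemma~\ref{lemma: arnold relation} for the cohomological picture, universal coefficients and the homological Whitehead theorem for connectivity, and the direct stabilizer-and-sign check for the $\Sigma_3$-module structure in degree $2d-2$---is exactly what the authors have in mind.

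One small remark on presentation: when you say the ``pointwise stabilizer'' of $1\otimes u\otimes u$ is $\Sigma_2$, what you actually use (and what is true) is that $\langle(12)\rangle$ is the stabilizer of the \emph{line} through that basis vector, acting on it by $(-1)^{d-1}$; this is the hypothesis needed to recognize the module as induced. Also, your Frobenius-reciprocity argument for the uniqueness of the surjection onto $\Z[(-1)^{d-1}]$ is the cleanest way to pin down the second map. The paper elsewhere assumes $d\ge 3$, so your parenthetical about $d=2$ is not really needed here.
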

It is worth noticing that the short exact sequence splits, but not $\Sigma_3$-equivariantly.

Let $F_d$ be the homotopy fiber of the map $p_{\R^d}\colon \conf{R^d}{3}\to \conf{\R^d}{2}^3$. 
It follows from Corollary~\ref{corollary: homology} that the first non-trivial homotopy group of $F_d$ is $\pi_{2d-3}(F_d)$, and it is isomorphic to $\Z$. A priori, the homotopy groups of $F_d$ form a local coefficients system over  $\conf{\R^d}{3}$. We will generally assume that $d\ge 3$. With this assumption, the spaces $\conf{\R^d}{3}, \conf{\R^d}{2}$ and $F_d$ are simply-connected, and the groups $\pi_q(F_d)$ form a trivial coefficients system over $\conf{\R^d}{3}$.  Furthermore, the action of $\Sigma_3$ on $\conf{R^d}{3}$ and $\conf{R^d}{2}^3$ induces a well-defined action of $\Sigma_3$ on $\pi_q(F_d)$. Similarly, the relative homotopy groups $\pi_q(\conf{R^d}{2}^3, \conf{R^d}{3})$ are well-defined abelian groups with an action of $\Sigma_3$, independently of basepoints. 

Taking the first dimension in which the relevant homotopy-group is non-trivial, we obtain isomorphisms of groups with an action of $\Sigma_3$:
\[
\pi_{2d-3}(F_d)\cong \pi_{2d-2}(\conf{R^d}{2}^3, \conf{R^d}{3})\xrightarrow{\cong} H_{2d-2}(\conf{R^d}{2}^3, \conf{R^d}{3}) \cong \Z[(-1)^{d-1}].
\]
Here the first isomorphism is by standard homotopy theory, the second isomorphism is the relative Hurewicz isomorphism, and the third isomorphism follows from Corollary~\ref{corollary: homology}.

We will use this to define an obstruction to the lifting problem indicated in Diagram~\eqref{fig: lifting problem}. Suppose we have a map (and throughout this discussion, whenever we say ``map'' we mean ``$\Sigma_3$-equivariant map'') $\conf{K}{3}\to \conf{\R^d}{2}^3$, and we want to lift it to a map  $\conf{K}{3}\to \conf{\R^d}{3}$. Since the map $\conf{\R^d}{3}\to \conf{\R^d}{2}^3$ is $2d-3$-connected, obstruction theory tells us that the principal obstruction to the existence of a lift lies in the equivariant cohomology group 
\[
H_{\Sigma_3}^{2d-2}(\conf{K}{3}; \pi_{2d-3}(F_d))=H_{\Sigma_3}^{2d-2}(\conf{K}{3}; \Z[(-1)^{d-1}]).
\]
One well-known construction of the obstruction goes through induction on skeleta of $\conf{K}{3}$. See Steenrod~\cite[Part III]{Steenrod} for an exposition of this approach to obstruction theory in the non-equivariant setting, and Bredon~\cite[Chapter II]{Bredon} for the equivariant version. Strictly speaking, $\conf{K}{3}$ does not have a canonical cell structure, but in Section~\ref{sec:equality} we will apply the skeletal approach to obstruction theory to a subspace $\confs{K}{3}$ of $\conf{K}{3}$, which does have a canonical cellular structure.

 But now we will show another, more homotopy theoretic construction of the cohomological obstruction, which uses Eilenberg - Mac Lane spaces instead of cellular cochains. Standard homotopy theoretic arguments show that the two approaches lead to the same cohomological class when applied to cell complexes. There are a couple of advantages to the homotopy-theoretic approach. One is that it does not depend on choosing a cell structure on $\conf{K}{3}$, and is more canonical than the skeletal approach. The second, perhaps more interesting reason is that the homotopy-theoretic method can be easily modified to produce a stronger obstruction, that resides in framed cobordism (a.k.a stable cohomotopy) rather than ordinary cohomology.

We will develop by hand the bits of obstruction theory that we need. 
We refer the reader to~\cite[Section 4.3]{Hatcher} for a more systematic exposition of the approach to obstruction theory via Posnikov towers in the non-equivariant setting. We refer to~\cite[Chapter II.1]{May-Alaska} for a brief review of Postnikov towers for spaces with an action of a group.

Now that we are looking at spaces with an action of $\Sigma_3$, let $K(\Z[(-1)^d], 2d-2)$ denote an Eilenberg-Mac Lane space with an action of $\Sigma_3$ that acts by $\Z[(-1)^d]$ on the non-trivial homotopy group. Lemma~\ref{lemma: cohomology classifying} below is an easy consequence of Corollary~\ref{corollary: homology}. Before stating the lemma, let us review the definition of a $k$-(co)cartesian square diagram. For a thorough review of the concepts surrounding (co)-cartesian cubical diagrams we recommend~\cite{G} or~\cite{Munson-Volic}
\begin{definition}
Suppose that we have a commutative diagram 
\[
\begin{tikzcd}
X_0 \arrow[r] \arrow{d} & X_1 \arrow{d} \\
X_2 \arrow{r}& X_{12}
\end{tikzcd}
\]
One says that the diagram is {\em $k$-cartesian} if the induced map from $X_0$ to the homotopy pullback of 
\[
X_2\to X_{12}\leftarrow X_1
\]
is $k$-connected. Dually, the diagram is {\em $k$-cocartesian} if the induced map from the homotopy pushout 
\[
X_2\leftarrow X_{0}\rightarrow X_1
\]
to $X_{12}$ is $k$-connected.
\end{definition}
Notice that if, say, $X_2\simeq *$, then to say that the square is (co)cartesian is equivalent to saying that $X_0\to X_1\to X_{12}$ is a homotopy (co)fibration sequence.
\begin{lemma}\label{lemma: cohomology classifying}
Assume that $d\ge 3$. There exists a model of the Eilenberg-Mac Lane space $K(\Z[(-1)^d], 2d-2)$ for which there is a $\Sigma_3$-equivariant map 
\[
\conf{R^d}{2}^3 \to K(\Z[(-1)^{d-1}], 2d-2),
\] such that the composite map 
\[
\conf{\R^d}{3}\xrightarrow{p_{\R^d}}\conf{R^d}{2}^3 \to K(\Z[(-1)^{d-1}], 2d-2)
\]
is equivariantly null-homotopic, and the following diagram is $2d-2$-cartesian
\begin{equation}\label{eq: cohomology classifying square}
\begin{tikzcd}
\conf{\R^d}{3} \arrow[r,"p_{\R^d}"] \arrow{d}  & \conf{\R^d}{2}^3 \arrow{d} \\
* \arrow{r}& K(\Z[(-1)^{d-1}], 2d-2)
\end{tikzcd}.
\end{equation}
\end{lemma}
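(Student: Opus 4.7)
The plan is to realize the statement as an equivariant Moore--Postnikov factorization of $p_{\R^d}$ through dimension $2d-2$. Because $d\geq 3$, the three spaces $\conf{\R^d}{3}$, $\conf{\R^d}{2}^3$ and the homotopy fiber $F_d$ of $p_{\R^d}$ are all simply connected, so Postnikov theory applies cleanly and the only additional bookkeeping is the $\Sigma_3$-action on the relevant homotopy groups.

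First I would record what is already known about $F_d$. From Corollary \ref{corollary: homology} and the subsequent discussion, $F_d$ is $(2d-4)$-connected with a $\Sigma_3$-equivariant isomorphism $\pi_{2d-3}(F_d)\cong \Z[(-1)^{d-1}]$. Consequently, by equivariant obstruction theory there is a $(2d-2)$-connected $\Sigma_3$-equivariant map $F_d\to K(\Z[(-1)^{d-1}], 2d-3)$ inducing the identity on $\pi_{2d-3}$, where the target is an equivariant model of the Eilenberg--Mac Lane space realizing the module $\Z[(-1)^{d-1}]$.

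Next I would extend this to an equivariant Moore--Postnikov factorization
\[
\conf{\R^d}{3} \xrightarrow{\; g\;} X' \xrightarrow{\; h\;} \conf{\R^d}{2}^3
\]
of $p_{\R^d}$, with $g$ a $(2d-2)$-connected $\Sigma_3$-equivariant map and $h$ a principal $\Sigma_3$-equivariant $K(\Z[(-1)^{d-1}], 2d-3)$-fibration. Concretely this is obtained by first replacing $p_{\R^d}$ by an equivariant fibration and then equivariantly attaching cells to the total space to kill the homotopy groups of the fiber above degree $2d-3$; the long exact sequence of the fibration gives $(2d-2)$-connectivity of $g$. Any principal $K(\pi, n-1)$-fibration over a space $B$ is pulled back from the path-loop fibration $PK(\pi,n)\to K(\pi,n)$ along a classifying map $B\to K(\pi,n)$, and the same holds $\Sigma_3$-equivariantly provided $K(\pi,n)$ is given the equivariant model realizing $\pi$. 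This produces a $\Sigma_3$-equivariant map $\varphi\colon \conf{\R^d}{2}^3\to K(\Z[(-1)^{d-1}], 2d-2)$ fitting into a $\Sigma_3$-equivariantly homotopy cartesian square
\[
\begin{tikzcd}
X' \arrow[r] \arrow[d] & * \arrow[d] \\
\conf{\R^d}{2}^3 \arrow[r, "\varphi"] & K(\Z[(-1)^{d-1}], 2d-2).
\end{tikzcd}
\]
Since $g$ is $(2d-2)$-connected, substituting $\conf{\R^d}{3}$ for $X'$ yields the $(2d-2)$-cartesian square of the lemma; an equivariant null-homotopy of the composite $\varphi\circ p_{\R^d}$ is built into the construction via the factorization through $X'\to *$.

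The main obstacle is verifying that every step of the Moore--Postnikov factorization can be carried out $\Sigma_3$-equivariantly while preserving the twisted module structure $\Z[(-1)^{d-1}]$. This is handled by the equivariant obstruction theory for spaces with a group action (cf.\ \cite[Chapter II.1]{May-Alaska} and \cite[Chapter II]{Bredon}); the simple-connectivity assumption $d\geq 3$ together with the identification of $\pi_{2d-3}(F_d)$ as a $\Sigma_3$-module established in the discussion preceding the lemma make all the relevant equivariant obstruction groups both defined and, at the first non-trivial stage, canonically identified with the cohomology group $H^{2d-2}_{\Sigma_3}(\conf{\R^d}{2}^3;\Z[(-1)^{d-1}])$ in which the Arnold class lives.
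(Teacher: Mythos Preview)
Your argument is correct and is a genuinely different route from the paper's. You work on the \emph{fiber} side: you take the Moore--Postnikov $(2d-2)$-factorization of $p_{\R^d}$, so that the intermediate space $X'$ sits over $\conf{\R^d}{2}^3$ as a principal $K(\Z[(-1)^{d-1}],2d-3)$-fibration, classify it by a map $\varphi$ to $K(\Z[(-1)^{d-1}],2d-2)$, and then replace the resulting strictly cartesian square by a $(2d-2)$-cartesian one via the $(2d-2)$-connected map $\conf{\R^d}{3}\to X'$. The paper instead works on the \emph{cofiber} side: it forms the homotopy cofiber $C$ of $p_{\R^d}$, uses the Blakers--Massey theorem to see that the resulting cocartesian square is $(3d-5)$-cartesian, identifies $\pi_{2d-2}(C)\cong\Z[(-1)^{d-1}]$ via Hurewicz, and then replaces $C$ by its equivariant Postnikov section $K(\Z[(-1)^{d-1}],2d-2)$ through a $(2d-1)$-connected map, yielding a $\min(3d-5,2d-2)=2d-2$-cartesian square.

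Both arguments invoke equivariant Postnikov theory (your reference to \cite{May-Alaska} and \cite{Bredon} is apt), but the paper's approach needs only the absolute Postnikov section of a single pointed $\Sigma_3$-space $C$, whereas yours needs the relative (Moore--Postnikov) version for the map $p_{\R^d}$; this is a mild extra input but is standard. What the paper's cofiber approach buys is that the intermediate square (with $C$) is already $(3d-5)$-cartesian, a strictly stronger statement that is reused later in Proposition~\ref{prop: cobordism classifying} when $C$ is replaced not by an Eilenberg--Mac~Lane space but by $\widehat\Omega^2\Omega^\infty\Sigma^\infty\widehat S^{2d}$. Your Moore--Postnikov argument gives exactly the $(2d-2)$-cartesian conclusion needed here, no more; to match the later refinement you would have to rerun a separate fiberwise argument. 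Your final paragraph about the Arnold class is correct in spirit but not actually needed for the lemma as stated.
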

\begin{proof}
Let $C$ be the homotopy cofiber of the map $p_{\R^d}$. Then $C$ is a pointed space, and there is a cocartesian square
\[
\begin{tikzcd}
\conf{\R^d}{3} \arrow[r,"p_{\R^d}"] \arrow{d} & \conf{\R^d}{2}^3 \arrow{d} \\
* \arrow{r}& C
\end{tikzcd}.
\]
By Corollary~\ref{corollary: homology}
the map $p_{\R^d}$ is $2d-3$-connected. Also the space $\conf{\R^d}{3}$ is $d-2$-connected, so the left vertical map in the square diagram is $d-1$-connected. By the Blakers-Massey theorem, it follows that the square is $3d-5$-cartesian. It also follows from Corollary~\ref{corollary: homology} that the bottom non-trivial homology group of $C$ occurs in dimension $2d-2$, and in this dimension the homology group of $C$ is isomorphic to $\Z[(-1)^{d-1}]$ as a $\Sigma_3$-module. We assume that $d$ is at least $3$, so $\conf{\R^d}{2}$ and $\conf{\R^d}{3}$ are simply connected. By Hurewicz theorem, the bottom non-trivial homotopy group of $C$ also occurs in dimension $2d-2$, and furthermore $\pi_{2d-2}(C)\cong \Z[(-1)^{d-1}]$ as a $\Sigma_3$-module. By the theory of equivariant Postnikov towers~\cite[Chapter II.1]{May-Alaska}, there exists a model for the Eilenberg-Mac Lane space $K(\Z[(-1)^{d-1}], 2d-2)$ equipped with a map $C\to K(\Z[(-1)^{d-1}], 2d-2)$ that induces an isomorphism on homotopy groups in dimensions up to $2d-2$. This map is an epimorphism in dimension $2d-1$, because $\pi_{2d-1}(K(\Z[(-1)^{d-1}], 2d-2))=0$. In other words, the map $C\to K(\Z[(-1)^{d-1}], 2d-2)$ is $2d-1$-connected. By substituting $K(\Z[(-1)^{d-1}], 2d-2)$ for $C$ in the square diagram at the beginning of the proof, we obtain the required diagram~\eqref{eq: cohomology classifying square}. By our calculations this diagram is $\min(3d-5, 2d-2)$-cartesian. Since $d\ge 3$, it is $2d-2$-cartesian, as required.
\end{proof}

Now let us consider again the lifting problem in figure~\eqref{fig: lifting problem}. We have a $\Sigma_3$-equivariant map 
\[
\conf{K}{3}\xrightarrow{(f_2)^3\circ p_K} \conf{\R^d}{2}^{3}.
\]
Composing with the $\Sigma_3$-equivariant map $\conf{R^d}{2}^3 \to K(\Z[(-1)^{d-1}], 2d-2)$ constructed in Lemma~\ref{lemma: cohomology classifying} we obtain a composition of maps
\begin{equation}\label{eq: map representing vk}
\conf{K}{3}\xrightarrow{p_K} \conf{K}{2}^3 \xrightarrow{f_2^3} \conf{\R^d}{2}^3 \to K(\Z[(-1)^{d-1}], 2d-2).
\end{equation}
This composition of maps defines an element in the equivariant cohomology group $$H^{2d-2}_{\Sigma_3}(\conf{K}{3}; \Z[(-1)^{d-1}]).$$
\begin{definition}
Let $\obs{K}\in H^{2d-2}_{\Sigma_3}(\conf{K}{3}; \Z[(-1)^{d-1}])$ be the element corresponding to the map~\eqref{eq: map representing vk}.
\end{definition}
The following proposition is an easy consequence of Lemma~\ref{lemma: cohomology classifying}.
\begin{proposition}\label{prop: O3}
The element $\obs{K}$ is an obstruction to the lifting problem in figure~\eqref{fig: lifting problem}. That is, if $\obs{K}\ne 0$ then the map  $(f_2)^3\circ p_K$ does not have a lift. The element $\obs{K}$ is a complete obstruction if $3\dim(K)=2d-2$.
\end{proposition}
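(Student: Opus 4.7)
The plan is to derive both parts of the proposition directly from Lemma~\ref{lemma: cohomology classifying}, invoking the $(2d-2)$-cartesian property of diagram~\eqref{eq: cohomology classifying square} together with standard equivariant obstruction theory. The $\Sigma_3$-equivariance is unproblematic throughout: since $\Sigma_3$ acts freely on $\conf{K}{3}$, the equivariant obstruction theory reduces to ordinary obstruction theory on the orbit space with appropriately twisted coefficients.

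For the necessity direction, suppose a $\Sigma_3$-equivariant lift $g\colon \conf{K}{3}\to \conf{\R^d}{3}$ exists, meaning $p_{\R^d}\circ g$ is equivariantly homotopic to $(f_2)^3\circ p_K$. Then the map~\eqref{eq: map representing vk} representing $\obs{K}$ is equivariantly homotopic to $g$ followed by the composite $\conf{\R^d}{3}\xrightarrow{p_{\R^d}}\conf{\R^d}{2}^3\to K(\Z[(-1)^{d-1}], 2d-2)$. Lemma~\ref{lemma: cohomology classifying} asserts exactly that this second composite is equivariantly null-homotopic, so $\obs{K}=0$.

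For completeness when $3\dim(K)=2d-2$, assume $\obs{K}=0$, so that the map~\eqref{eq: map representing vk} admits a $\Sigma_3$-equivariant null-homotopy. Combined with the original $(f_2)^3\circ p_K$, this data yields a $\Sigma_3$-equivariant lift of $(f_2)^3\circ p_K$ into the homotopy pullback $P$ of the cospan $*\to K(\Z[(-1)^{d-1}], 2d-2)\leftarrow \conf{\R^d}{2}^3$. By Lemma~\ref{lemma: cohomology classifying}, the canonical map $\conf{\R^d}{3}\to P$ is $(2d-2)$-connected, so the obstructions to lifting our map $\conf{K}{3}\to P$ further to $\conf{\R^d}{3}$ live in the equivariant cohomology groups $H^{i+1}_{\Sigma_3}(\conf{K}{3}; \pi_i(F'))$ with $i\ge 2d-2$, where $F'$ denotes the homotopy fiber of $\conf{\R^d}{3}\to P$. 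Since $\dim\conf{K}{3}=3\dim(K)=2d-2$, these groups all vanish for dimensional reasons, and the desired lift exists. The only delicate step is the bookkeeping for equivariant obstruction theory, but this is routine given the freeness of the $\Sigma_3$-action on $\conf{K}{3}$, which allows one to work entirely with the ordinary cohomology of the orbit space.
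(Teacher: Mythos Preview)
Your proof is correct and is exactly the argument the paper has in mind: the proposition is stated as ``an easy consequence of Lemma~\ref{lemma: cohomology classifying}'', and you have simply spelled out that consequence. The only point worth a small remark is that $\conf{K}{3}$ is not itself a CW complex, so your line ``$\dim\conf{K}{3}=3\dim(K)$'' is slightly informal; what you actually use is that $\conf{K}{3}$, being an open subset of the $3\dim(K)$-dimensional complex $K^3$, has the (equivariant) homotopy type of a complex of dimension at most $3\dim(K)$, so its cohomology vanishes above that degree. With that understood, the argument goes through.
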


It follows in particular that $\obs{K}$ is a complete obstruction to the lifting problem in~\eqref{fig: lifting problem} if $\dim(K)=2$ and $d=4$.

\subsection{A lift to framed cobordism} \label{subsection: cobordism} We saw earlier that the classical, cohomological van Kampen obstruction has a natural lift to a potentially stronger obstruction that lives in equivariant stable cohomotopy, a.k.a equivariant framed cobordism. The obstruction $\obs{K}$ has a similar lift, which we denote $\obsfr{K}$.

\begin{convention}\label{action convention}
Until the end of this section, and in Section \ref{sec: construction}, we consider spaces with an action of $\Sigma_3$ and no other symmetric groups. Likewise, in this section and in Section \ref{sec: construction}, let $\widehat R^2$ be the reduced standard representation of $\Sigma_3$, let $\widehat \R^{2d}=\widehat R^2\otimes \R^d$, and let $\widehat S^{2d}$ be the one-point compactification of $\widehat \R^{2d}$.

As a space, $\widehat S^{2d}$ is simply the $2d$-dimensional sphere. The `hat' is there to indicate that it is a space with a specific action of $\Sigma_3$. In the same vein, let $\widehat \Omega^2\widehat S^{2d}=\map_*(\widehat S^2, \widehat S^{2d})$ be the double loop space $\Omega^2 S^{2d}$, on which $\Sigma_3$ acts via both $S^2$ and $S^{2d}$. Similarly define the space with $\Sigma_3$-action $\widehat \Omega^2\Omega^\infty\Sigma^\infty\widehat S^{2d}$.
\end{convention}
The following proposition is a refinement of Lemma~\ref{lemma: cohomology classifying}.
\begin{proposition}\label{prop: cobordism classifying}
There is a $3d-5$-cartesian diagram of spaces with an action of $\Sigma_3$
\begin{equation}\label{eq: cobordism classifying square}
\begin{tikzcd}
\conf{\R^d}{3} \arrow[r] \arrow{d} & \conf{\R^d}{2}^3 \arrow{d} \\
* \arrow{r}& \widehat \Omega^2\Omega^\infty\Sigma^\infty\widehat S^{2d}
\end{tikzcd}.
\end{equation}
\end{proposition}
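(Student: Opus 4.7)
My plan is to parallel the proof of Lemma~\ref{lemma: cohomology classifying}, replacing the Eilenberg--Mac Lane space with the infinite loop space $\widehat\Omega^2\Omega^\infty\Sigma^\infty\widehat S^{2d}$. First I form the homotopy cofiber $C$ of the $\Sigma_3$-equivariant map $p_{\R^d}\colon \conf{\R^d}{3} \to \conf{\R^d}{2}^3$. As in the proof of Lemma~\ref{lemma: cohomology classifying}, Corollary~\ref{corollary: homology} gives that $p_{\R^d}$ is $(2d-3)$-connected and that $\conf{\R^d}{3}$ is $(d-2)$-connected, so the Blakers--Massey theorem yields that the cofiber square
\[
\begin{tikzcd}
\conf{\R^d}{3} \arrow[r,"p_{\R^d}"] \arrow{d} & \conf{\R^d}{2}^3 \arrow{d} \\
* \arrow{r}& C
\end{tikzcd}
\]
is $(3d-5)$-cartesian. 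Hence it suffices to produce a $\Sigma_3$-equivariant map $C\to\widehat\Omega^2\Omega^\infty\Sigma^\infty\widehat S^{2d}$ that is sufficiently highly connected so that the induced square remains $(3d-5)$-cartesian.

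Next I identify the target equivariantly. Since $\widehat R^{2d}\cong\widehat R^2\otimes\R^d$, we have $\widehat S^{2d}\simeq(\widehat S^2)^{\wedge d}$ as $\Sigma_3$-spaces, and adjunction provides an equivalence
\[
\widehat\Omega^2\Omega^\infty\Sigma^\infty\widehat S^{2d}\simeq\Omega^\infty\Sigma^\infty\widehat S^{2(d-1)}.
\]
This is $(2d-3)$-connected, with $\pi_{2d-2}\cong\Z$ carrying the $\Sigma_3$-action through the orientation character of $\widehat R^{2(d-1)}\cong(\widehat R^2)^{d-1}$, which equals $\det(\widehat R^2)^{d-1}=(-1)^{d-1}$ on a transposition. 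This matches $\pi_{2d-2}(C)\cong\Z[(-1)^{d-1}]$ as computed from the Hurewicz theorem and Corollary~\ref{corollary: homology}.

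The key step is to construct a $\Sigma_3$-equivariant map $C\to\widehat\Omega^2\Omega^\infty\Sigma^\infty\widehat S^{2d}$ that is $(3d-4)$-connected. The naive equivariant Postnikov approach that suffices for Lemma~\ref{lemma: cohomology classifying} only produces a $(2d-1)$-connected map, which is too weak in dimensions $d\ge 5$. For the stronger connectivity, observe that a K\"unneth computation using Corollary~\ref{corollary: homology} together with $\conf{\R^d}{2}^3\simeq(S^{d-1})^3$ shows that $\widetilde H_*(C)$ vanishes in the range $[2d-1,3d-4]$. Consequently the Freudenthal stabilization $C\to\Omega^\infty\Sigma^\infty C$, which is $(4d-5)$-connected, can be composed with a stable equivariant identification $\Sigma^\infty C\to\Sigma^\infty\widehat S^{2(d-1)}$ valid through degree $3d-4$, together with the equivalence above, to produce the desired $(3d-4)$-connected map. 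An explicit realization of this classifying map via a Pontryagin--Thom construction is the content of Section~\ref{sec: construction}; the present proof assumes its existence.

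Finally, to conclude that the new square is $(3d-5)$-cartesian, apply the fiber sequence $\mathrm{hofib}(g)\to\mathrm{hofib}(h\circ g)\to\mathrm{hofib}(h)$ to the composition $\conf{\R^d}{2}^3\to C\to\widehat\Omega^2\Omega^\infty\Sigma^\infty\widehat S^{2d}$. Since the rightmost map is $(3d-4)$-connected, its homotopy fiber is $(3d-5)$-connected, whence the induced map $\mathrm{hofib}(\conf{\R^d}{2}^3\to C)\to\mathrm{hofib}(\conf{\R^d}{2}^3\to\widehat\Omega^2\Omega^\infty\Sigma^\infty\widehat S^{2d})$ is $(3d-5)$-connected. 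Combined with the $(3d-5)$-cartesian-ness of the cofiber square, this yields the required conclusion. The main obstacle is producing a map $C\to\widehat\Omega^2\Omega^\infty\Sigma^\infty\widehat S^{2d}$ with the appropriate connectivity, which is the geometric/homotopy-theoretic heart of the construction and is deferred to Section~\ref{sec: construction}.
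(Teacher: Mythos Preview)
Your framework is essentially the paper's own: form the homotopy cofiber $C$ of $p_{\R^d}$, apply Blakers--Massey to see that the cofiber square is $(3d-5)$-cartesian, and then reduce to producing a sufficiently connected $\Sigma_3$-equivariant map to $\widehat\Omega^2\Omega^\infty\Sigma^\infty\widehat S^{2d}$. Since you explicitly defer that construction to Section~\ref{sec: construction}, and since your final fiber-sequence step is correct, the proposal is really the same reduction the paper makes, with a minor reorganization (you pass through the cofiber $C$, whereas the paper works directly with the candidate classifying map and checks the criterion of Lemma~\ref{lemma: cocartesian}).

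There is, however, a genuine gap in your attempted \emph{abstract} argument for the existence of the map. You claim that because $\widetilde H_*(C)$ vanishes in degrees $[2d-1,3d-4]$, one obtains ``a stable equivariant identification $\Sigma^\infty C\to\Sigma^\infty\widehat S^{2(d-1)}$ valid through degree $3d-4$''. But matching homology in a range does not produce a map in this direction. Hurewicz/Whitehead gives a $(3d-4)$-connected map $\widehat S^{2(d-1)}\to C$, which points the wrong way. To obtain a map \emph{out of} $C$ (equivalently, a stable cohomotopy class on $\conf{\R^d}{2}^3$) one must extend the identity on the bottom cell over the top $(3d-3)$-cell of $C$, and the obstruction to that lies in the stable stem $\pi^s_{d-2}$; nothing in your homological argument addresses it. This is precisely why the paper does not argue abstractly: Section~\ref{sec: construction} writes the map down explicitly as the Thom--Pontryagin collapse onto the thin diagonal of $(\widetilde S^{d-1})^3$, and then verifies the easy criterion of Lemma~\ref{lemma: cocartesian} (null-homotopic composite with $\conf{\R^d}{3}$ and surjectivity on $H_{2d-2}$), which forces the square to be $(3d-4)$-cocartesian and hence $(3d-5)$-cartesian by Blakers--Massey.
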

We will prove this proposition in Section~\ref{sec: construction}. For the rest of the section, we consider some consequences. It follows from the proposition that given a $\Sigma_2$-equivariant map $f_2\colon \conf{K}{2}\to \conf{\R^d}{2}$, a necessary condition for the lifting problem~\eqref{fig: lifting problem} to have a solution (and therefore also for $f_2$ to be equivariantly homotopic to the deleted square of some embedding) is that the following composition is $\Sigma_3$-equivariantly null-homotopic (compare with~\eqref{eq: map representing vk}):
\[
\conf{K}{3}\xrightarrow{p_K} \conf{K}{2}^3 \xrightarrow{f_2^3} \conf{\R^d}{2}^3 \to \widehat \Omega^2\Omega^\infty\Sigma^\infty\widehat S^{2d}.
\]
We interpret this composition as an element in the equivariant stable cohomotopy of $\conf{K}{3}$, or equivalently in the equivariant framed cobordism group $\obsfr{K}\in \Omega^{\widehat \R^{2(d-1)}}_{fr}(\conf{K}{3})$. The class $\obsfr{K}$ is an obstruction to a solution of the lifting problem~\eqref{fig: lifting problem}. $\obsfr{K}$ is a refinement of $\obs{K}$ in the same way as $\vkfr{K}$ is a refinement of $\vk{K}$. $\obsfr{K}$ is a complete obstruction to the lifting problem if $3\dim(K)\le 3d-5$, while $\obs{K}$ is a complete obstruction if $3\dim(K)\le 2d-2$. Of course when $\dim(K)=2$ and $d=4$ both conditions hold, and $\obsfr{K}$ does not provide any more information than $\obs{K}$. In Section~\ref{geometric interpretation} we will use our specific construction of the classifying map to give another interpretation of $\obs{K}$. 
\begin{remark}
The obstruction $\obsfr{M}$ in the context of smooth embeddings is the subject of Munson's paper~\cite{Munson}. In particular, Proposition~\ref{prop: cobordism classifying} is proved there. We give a different proof in Section~\ref{sec: construction}. As a consequence, we will give another geometric interpretation of $\obsfr{K}$ in Section~\ref{geometric interpretation}. This interpretation is hinted at in [op. cit.].
\end{remark}

\section{Geometric obstructions from Whitney towers} \label{sec:Whitney towers}

This section starts by reviewing a geometric formulation of van Kampen's obstruction (Section \ref{sec:van Kampen}) and operations on Whitney disks (Section \ref{Whitney subsection}) which are commonly used in $4$-manifold topology. These techniques are then used to establish new results: a relation between Whitney disks and equivariant maps of configuration spaces (Section \ref{sec: from Whitney to config}) and higher embedding obstructions for $2$-complexes in $\R^4$ based on intersections of Whitney disks: 
$\gobs{K}$ in Section \ref{sec:higher geometric} and $\gobsn{K}, n>3$ in Section \ref{sec: Whitney towers}. The relation of $\gobs{K}$ to the obstruction $\obs{K}$ defined above is the subject of Section \ref{sec:equality}.

\subsection{The van Kampen obstruction} \label{sec:van Kampen} The discussion in the paper so far concerned the general embedding problem for $m$-complexes in $\R^d$. Here we restrict to the original van Kampen's context where $d=2m$. Later in this section we will specialize further to $m=2$.
We start by recalling a geometric description of the van Kampen obstruction \begin{equation} \label{eq: vk def}
\svk{K}\in H^{2m}_{{\Sigma}_2}(\confs{K}{2}; {\mathbb Z})
\end{equation}

to embeddability of an $m$-complex $K$ into ${\mathbb R}^{2m}$. 
This was the construction outlined by van Kampen in \cite{vK}; the details were clarified in \cite{Shapiro, Wu}, see also \cite{FKT}. As in the introduction the notation $\confs{K}{2}$ denotes the ``simplicial'' configuration space $K\times K\smallsetminus \Delta$ where  $\Delta$ consisting of all products of simplices ${\sigma}_1\times {\sigma}_2$ having a vertex in common. The group $\Sigma_2$ acts on the configuration space $K\times K\smallsetminus {\Delta}$ by exchanging the factors; it may be seen from the description below that the action of $\Sigma_2$ on the coefficients is trivial, cf. \cite{Shapiro, Melikhov} (note that the sign was misstated as
$(-1)^m$ in \cite{FKT}.) 

Note that $\svk{K}$ is an element of the cohomology group of $\confs{K}{2}$, while $\vk{K}$ (considered in the introduction and in Section \ref{sec: Constructing the obstruction}) is an element of the cohomology group of the configuration space $\conf{K}{2}$ defined using the point-set diagonal. 
The invariant $\vk{K}$ is the ``universal'' van Kampen obstruction, independent of the simplicial structure, and $\svk{K}$ may be recovered from it: $\svk{K}=i^*\vk{K}$, where $i$ is the inclusion map $\confs{K}{2}\subset \conf{K}{2}$, cf. \cite[Section 3]{Melikhov}.  A priori $\svk{K}$ could be a weaker invariant since it does not keep track of intersections of adjacent simplices. Nevertheless, it is a complete embedding obstruction for $m$-complexes in $\R^{2m}$ for $m>2$: intersections of  adjacent simplices may be removed using a version of the Whitney trick, cf. \cite[Lemma 5]{FKT}.

\begin{remark}
The obstruction theory in Section \ref{sec: Constructing the obstruction} was developed for embeddings of finite CW complexes. The geometric approach presented here is based on intersection theory and it applies to finite simplicial complexes.
We will interchangeably use the terms {\em cells} and {\em simplices} in the context of simplicial complexes; this should not cause confusion.
\end{remark}

Consider any general position map $f\! : K\longrightarrow {\mathbb R}^{2m}$. Endow the $m$-cells of $K$ with arbitrary orientations, and for any two $m$-cells ${\sigma}_1, {\sigma}_2$ without vertices in common, consider the algebraic intersection number $f({\sigma}_1)\cdot f({\sigma}_2)\in {\mathbb Z}$. This gives a $\Sigma_2$-equivariant cochain 
\begin{equation} \label{eq: vk cohain}
o^{}_f\!: C_{2m}(K\times K\smallsetminus {\Delta})\longrightarrow {\mathbb Z}. 
\end{equation}
Since this is a top-dimensional cochain, it is a cocycle.  Its cohomology class equals the van Kampen obstruction $\svk{K}$. 

The fact that this cohomology class is independent of a choice of $f$ may be seen geometrically as follows (see \cite[Lemma 1, Section 2.4]{FKT} for more details). Any two general position maps $f_0, f_1\! : K\longrightarrow {\mathbb R}^{2m}$ are connected by a $1$-parameter family of maps $f_t$ where at a non-generic time $t_i$ an $m$-cell $\s$ intersects an $(m-1)$-cell $\nu$. Topologically the maps $f_{t_i-{\epsilon}}$ and $f_{t_i+{\epsilon}}$ differ by a ``finger move'', that is tubing ${\sigma}$ into a small $m$-sphere linking $\nu$ in ${\mathbb R}^{2m}$, Figure \ref{fig:FingerMove}. The effect of this elementary homotopy on the van Kampen cochain is precisely the addition of the coboundary ${\delta}(u_{{\sigma},{\nu}})$, where $u_{{\sigma},{\nu}}$ is the $\Sigma_2$-equivaraint ``elementary $(2m-1)$-cochain'' dual to the $(2m-1)$-cells ${\sigma}\times{\nu}, {\nu}\times {\sigma}$.

\begin{figure}[ht]
\centering
\includegraphics[height=2.7cm]{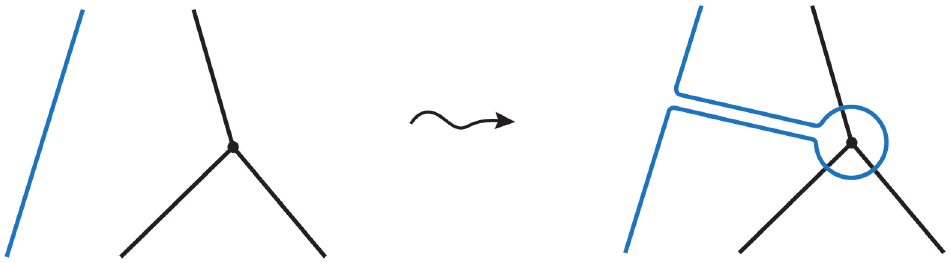}
{    
       \put(-205,35){${\nu} $}
        \put(-272,55){${ {\sigma}} $}
    }  
    \caption{Finger move: homotopy of maps $f\co K\longrightarrow {\mathbb R}^{2m}$}
\label{fig:FingerMove}
\end{figure}

This argument has the following corollary. 

\begin{lemma} \label{trivial vk} 
Any cocycle representative of the cohomology class $\svk{K} \in H^{2m}_{{\mathbb Z}/2}(K\times K\smallsetminus {\Delta}; {\mathbb Z})$ may be realized as the cocycle $o^{}_f$ for some general position map $f\co K\longrightarrow {\mathbb R}^{2m}$.
In particular, if the van Kampen obstruction $\svk{K}$ vanishes then  there exists a general position map $f\co K\longrightarrow {\mathbb R}^{2m}$ such that the cocycle $o^{}_f$ is identically zero. In other words, in this case for any two non-adjacent $2$-cells ${\sigma}$, ${\tau}$ the algebraic intersection number $f({\sigma})\cdot f({\tau})$ is zero. 
\end{lemma}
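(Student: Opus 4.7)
My plan is to upgrade the computation of the \emph{effect} of a finger move, already sketched in the paragraph preceding the lemma, into a surjectivity statement: the finger-move operations realize every equivariant coboundary in dimension $2m$. Given any general position map $f_0$, write its van Kampen cocycle as $o_{f_0}$, and let $c$ be a target cocycle representing $\svk{K}$. Then $c - o_{f_0} = \delta u$ for some $\Sigma_2$-equivariant $(2m-1)$-cochain $u$. Since the equivariant cochain group in dimension $2m-1$ is freely generated (as an abelian group) by the elementary equivariant cochains $u_{\sigma,\nu}$ dual to pairs of orbits $\{\sigma\times\nu,\nu\times\sigma\}$ where $\sigma$ is an $m$-cell and $\nu$ an $(m-1)$-cell with no common vertex, we can write $u = \sum n_{\sigma,\nu}\, u_{\sigma,\nu}$ with integer coefficients.

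Next, I would realize each elementary coboundary $\delta u_{\sigma,\nu}$ geometrically. By the discussion leading to the lemma, a single finger move that pushes a point of $f(\sigma)$ across $f(\nu)$ changes $o_f$ by exactly $\pm\delta u_{\sigma,\nu}$, where the sign depends on the local orientation of the crossing. Since finger moves can be performed with either sign (by choosing the direction in which $\sigma$ is fingered across $\nu$) and iterated $|n_{\sigma,\nu}|$ times, we can add $n_{\sigma,\nu}\,\delta u_{\sigma,\nu}$ to the cocycle. Performing, in some fixed order, the appropriate number of finger moves for each pair $(\sigma,\nu)$ produces a general position map $f$ connected to $f_0$ through a sequence of $1$-parameter families, whose van Kampen cocycle is exactly $o_{f_0} + \delta u = c$.

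The main subtlety is bookkeeping: the finger moves must be performed in small disjoint neighborhoods so that successive moves do not interact, and one must verify that this geometric realization is compatible with the equivariant cochain structure (i.e., a single finger move of $\sigma$ across $\nu$ indeed contributes symmetrically to the pair $\{\sigma\times\nu,\nu\times\sigma\}$ of dual cells, matching $u_{\sigma,\nu}$). This is a standard transversality argument in general position in codimension $m\ge 1$, and once set up it gives the required realization.

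The ``in particular'' clause follows immediately by taking $c \equiv 0$: if $\svk{K}=0$ then the zero cochain is a cocycle representative, so some general position map $f$ has $o_f\equiv 0$, meaning $f(\sigma)\cdot f(\tau)=0$ for every pair of non-adjacent $m$-cells $\sigma,\tau$.
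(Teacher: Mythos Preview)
Your proposal is correct and follows essentially the same approach as the paper: the paper presents this lemma as an immediate corollary of the preceding paragraph, which establishes that a finger move of $\sigma$ across $\nu$ changes the van Kampen cochain by exactly $\delta(u_{\sigma,\nu})$. You have simply spelled out the details the paper leaves implicit---decomposing the equivariant $(2m-1)$-cochain $u$ into elementary generators $u_{\sigma,\nu}$, controlling signs by the direction of the finger move, and arranging the moves in disjoint neighborhoods---so your argument is a faithful expansion of theirs.
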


\subsection{Operations on Whitney disks} \label{Whitney subsection}

The rest of Section \ref{sec:Whitney towers} concerns $2$-complexes in $\R^4$.
Assume the van Kampen class $\svk{K}$ vanishes. By Lemma \ref{trivial vk},  using finger moves on $2$-cells as shown in Figure \ref{fig:FingerMove}, a map $f$ may be chosen so that $f({\sigma}_i)\cdot f({\sigma}_j)=0$ for any non-adjacent $2$-cells ${\sigma}_i, {\sigma}_j$. As usual, one groups intersection points $f({\sigma}_i)\cap f({\sigma}_j)$ into canceling pairs, chooses  Whitney arcs connecting them in  ${\sigma}_i, {\sigma}_j$, and considers Whitney disks $W_{ij}$ for these intersections. Note that all Whitney arcs in each $2$-cell may be assumed to be pairwise disjoint.
Unlike the situation in higher dimensions where by general position a Whitney disk may be assumed to be embedded and to have interior disjoint from $K$, in $4$-space 
generically $W_{ij}$ will have self-intersections and also intersect the $2$-cells of $K$. Moreover, the framing (the relative Euler number of the normal bundle of the Whitney disk) might be non-zero, but it may be corrected by boundary twisting \cite[Section 1.3]{FQ}.
A detailed discussion of Whitney disks in this dimension is given in
\cite[Section 1.4]{FQ}. This section summarizes the operations on Whitney disks and their relation with capped surfaces which will be used in the proofs in Section \ref{sec:equality}.

\begin{figure}[ht]
\centering
\includegraphics[height=3.6cm]{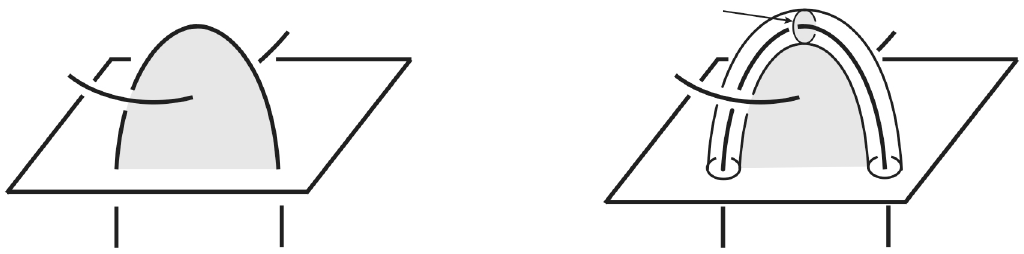}
{\small     
    \put(-294,6){${\sigma}^{}_j $}
       \put(-280,63){${\sigma}^{}_i $}
       \put(-400,78){${\sigma}^{}_k $}
       \put(-330,44){$W_{ij}$}
        \put(-50,6){${\sigma}^{}_j $}
       \put(-36,63){${\sigma}^{}_i $}
       \put(-155,78){${\sigma}^{}_k $}
       \put(-83,44){$C'$}
       \put(-135,97){$C''$}
    }  
    \caption{A Whitney disk and the associated capped surface}
\label{Whitney}
\end{figure}

{\bf Convention.} To avoid cumbersome notation, we will frequently omit the reference to a map $f$ and keep the notation $\s$ for the image of a cell $\s$ under $f$. 

A typical configuration is shown on the left in Figure \ref{Whitney}. It is a usual representation in $3$-space $\R^3\times\{ 0\}$ (the `present') of intersecting surfaces in $\R^4=\R^3\times\R$ where the $\R$ factor is thought of as time. Here $\s_i$ is pictured as a surface in $\R^3$ while $\s_j$, $\s_k$ are arcs which extend as the product (arc$\times I$) into the past and the future. The  Whitney disk $W_{ij}\subset \R^3\times\{ 0\}$ pairs up two generic intersection points $\s_i\cap \s_j$ of opposite signs, and $W_{ij}$ in the figure has a generic intersection point with another $2$-cell $\s_k$. The result of the Whitney move in this setting is shown in Figure \ref{afterWhitney}: the two intersection points $\s_i\cap \s_j$ are eliminated, but two new intersection points $\s_i\cap \s_k$ are created instead. 

\begin{figure}[ht]
\centering
\includegraphics[height=3.6cm]{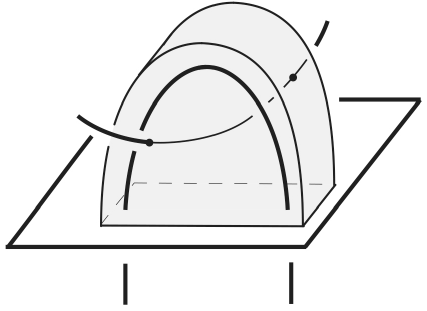}
{\small     
        \put(-41,5){${\sigma}^{}_j $}
       \put(-25,60){${\sigma}^{}_i $}
       \put(-125,71){${\sigma}^{}_k $}
    }  
    \caption{The result of the Whitney move}
\label{afterWhitney}
\end{figure}

In fact, the picture is symmetric with respect to the three sheets $\s_i, \s_j, \s_k$: a neighborhood of the Whitney disk $W_{ij}$ in $\R^4$ is a $4$-ball $D^4$, and the intersection of these three sheets with the boundary $3$-sphere $\partial D^4$ forms the Borromean rings (cf. \cite[Chapter 12]{FQ}), as shown in Figure \ref{fig:Bor}. Thus any two of the sheets can be arranged to be disjoint in this $4$-ball, but not all three simultaneously.

\begin{figure}[ht]
\centering
\includegraphics[height=3cm]{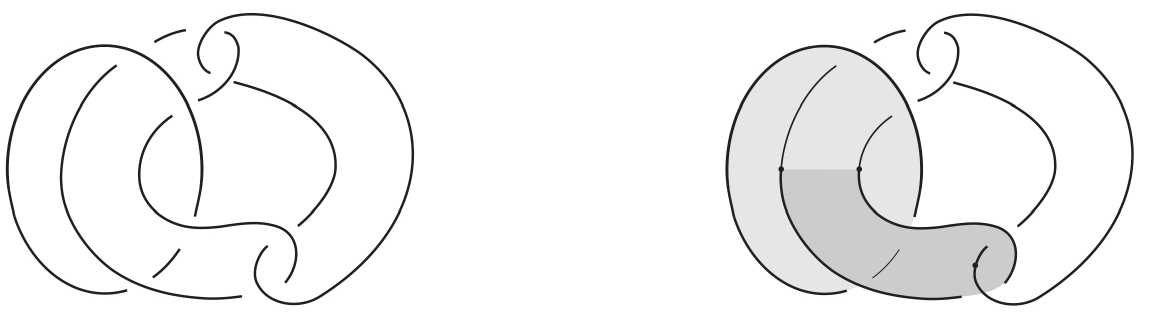}
{\small    
    \put(-6,40){$\s_k $}
    \put(-68,-6){$\s_i$}
    \put(-114,48){$ \s_j$}
    \put(-93,20){$W_{ij}$}
       \put(-198,40){$ \s_k $}
        \put(-265,-6){$ \s_i $}
       \put(-331,40){$ \s_j$}
    }  
    \caption{Left: the Borromean rings in $\partial D^4$. Right: The Whitney disk $W_{ij}$ intersects $\s_k$ in a single point}
\label{fig:Bor}
\end{figure}

It will be convenient to view these intersections in the context of {\em capped surfaces} (or more generally {\em capped gropes} for higher-order intersections) \cite[Chapter 2]{FQ}. This is shown on the right in Figure \ref{Whitney}: a tube is added to one of the two sheets, say $\s_i$ as shown in the figure, to eliminate the two intersections $\s_i\cap \s_j$ at the cost of adding genus to $\s_i$. The new surface, still denoted $\s_i$, has two {\em caps}: disks attached to a symplectic pair of curves on $\s_i$. One of the caps, $C'$, is obtained from the Whitney disk $W_{ij}$. The other cap is a disk normal to $\s_j$ and may be thought of as a fiber of the normal bundle to $\s_j$. A general translation between Whitney towers and capped gropes is discussed in \cite{Schneiderman}. An advantage of this point of view is the symmetry between the original map of $\s_i$ (intersecting $\s_j$ in two points, as shown on the left in the figure) and the result of the Whitney move where the two intersections $\s_i\cap \s_j$ are eliminated but $\s_i$ acquires two intersections with $\s_k$. The first case is obtained by ambient surgery of the capped surface in the figure on the right along the cap $C''$, and the second case is the surgery along $C'$. There is an intermediate operation, {\em symmetric surgery} (also known as {\em contraction}) \cite[Section 2.3]{FQ} that uses both caps that will be used in the arguments in the next section. The disk obtained by surgery on $C'$ is isotopic to the surgery on $C''$, and the symmetric surgery may be thought of as the half point of this isotopy.

Consider the following {\em splitting} operation on Whitney disks. Suppose a Whitney disk $W_{ij}$ pairing up intersections between $\s_i, \s_j$ intersects two other $2$-cells, $\s_k, \s_l$ as shown on the left in Figure \ref{fig:WhitneySplit}. Consider an arc in $W_{ij}$ (drawn dashed in the figure) which separates the intersections $W_{ij}\cap \s_k$, $W_{ij}\cap \s_l$ and whose two endpoints are in the interiors of the two Whitney arcs forming the boundary of $W_{ij}$. Then a finger move on one of the sheets, say $\s_j$, along the arc introduces two new points of intersection $\s_i\cap \s_j$ and splits $W_{ij}$ into two Whitney disks $W_{ij}', W_{ij}''$ as shown in the figure on the right.
The advantage of the result is that each Whitney disk intersects only one other $2$-cell. In general, if $W_{ij}$ had $m$ intersection points with other $2$-cells, an iterated application of splitting yields $m-1$ Whitney disks, each one with a single intersection point in its interior.

\begin{figure}[ht]
\centering
\includegraphics[height=3.6cm]{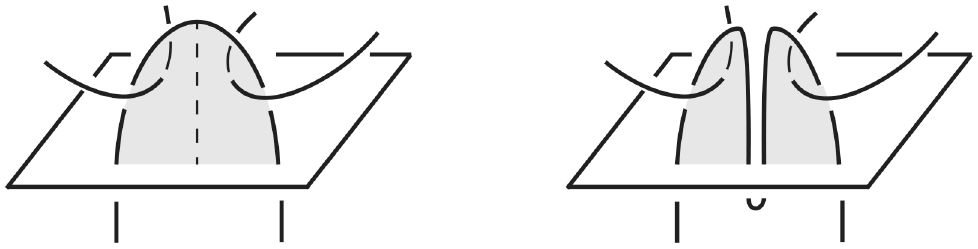}
{\small     
    \put(-283,6){${\sigma}^{}_j $}
       \put(-244,56){${\sigma}^{}_i $}
       \put(-397,83){${\sigma}^{}_k $}
       \put(-310,44){$W_{ij}$}
       \put(-248,92){$\s_l$}
        \put(-51,6){${\sigma}^{}_j $}
       \put(-13,56){${\sigma}^{}_i $}
       \put(-163,83){${\sigma}^{}_k $}
       \put(-18,92){$\s_l$}
       \put(-81,44){$W_{ij}''$}
       \put(-116,44){$W_{ij}'$}
    }  
    \caption{Splitting of a Whitney disk}
\label{fig:WhitneySplit}
\end{figure}

The discussion above referred to the situation where a Whitney disk $W_{ij}$ for  $\s_i\cap \s_j$ intersects $2$-cells which are not adjacent to $\s_i, \s_j$. In general, $W_{ij}$ will have self-intersections as well as intersections with $\s_i, \s_j$ and with $2$-cells adjacent to them. Intersections of these types are not considered in the formulation of the obstruction in Section \ref{sec:higher geometric}. (An obstruction involving these more subtle intersections will be explored in a future work. For example, the Arf invariant of a knot in $S^3$ may be defined using intersections of this type of the disk bounded by the knot in the $4$-ball, see Remark \ref{rem: triple intersection}.)

An ingredient in the formulation of higher obstructions in Section \ref{sec:higher geometric} is a local move on surfaces which replaces an intersection $\s_k\cap W_{ij}$ in Figure \ref{Whitney} with an intersection $\s_i\cap W_{jk}$ or $\s_j\cap W_{ik}$. 

To describe this operation in more detail, start with the model situation in Figure \ref{Whitney} where $W_{ij}$ has a single intersection point with $\s_k$. Perform a finger move on $\s_k$ along an arc from $\s_k\cap W_{ij}$ to a point on the Whitney arc in $\s_i$. The result is shown on the left in Figure \ref{fig:Whitney2}: now $\s_k$ is disjoint from $W_{ij}$ but there are two new intersections between $\s_i$ and $\s_k$. The finger move isotopy of $\s_k$ gives rise to a Whitney disk for these two points, denoted $W'_{ik}$ in the figure. Note however that the two Whitney disks $W_{ij}$, $W'_{ik}$ cannot be both used for Whitney moves since their boundary arcs intersect in $\s_j$. Resolving this intersection by an isotopy of the Whitney arc in the boundary of $W'_{ik}$ yields a Whitney disk $W_{ik}$ on the right in Figure \ref{fig:Whitney2}; this Whitney disk has a single intersection point with $\s_j$. (Note that after this operation the Whitney disk $W_{ij}$ is embedded and disjoint from other $2$-cells; a Whitney move along this disk can be used to eliminate the original two intersections $\s_i\cap \s_j$.)

Therefore to have a well-defined triple intersection number one has to (1) sum over Whitney disks over all pairs of indices, and (2) require that Whitney arcs are disjoint, see Section \ref{sec:higher geometric}.

\begin{figure}[ht]
\centering
\includegraphics[height=3.6cm]{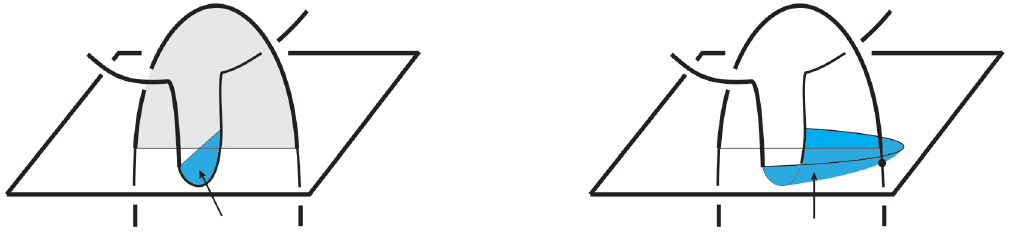}
{\small     
    \put(-306,2){${\sigma}^{}_j $}
       \put(-295,60){${\sigma}^{}_i $}
       \put(-417,84){${\sigma}^{}_k $}
       \put(-345,-2){$W'_{ik}$}
       \put(-339,45){$W_{ij}$}
        \put(-49,2){${\sigma}^{}_j $}
       \put(-39,60){${\sigma}^{}_i $}
       \put(-161,83){${\sigma}^{}_k $}
       \put(-90,-5){$W_{ik}$}
    }  
    \caption{From $\s_k\cap W_{ij}$ to $\s_j\cap W_{ik}$.}
\label{fig:Whitney2}
\end{figure}

\subsection{From Whitney disks to equivariant maps of configuration spaces} \label{sec: from Whitney to config}

Let $K$ be a $2$-complex and suppose the van Kampen obstruction $\svk{K}$ vanishes. Then by Lemma \ref{trivial vk} there is a map $f\co K\longrightarrow {\mathbb R}^4$ so that the algebraic intersection number of any two non-adjacent $2$-cells in ${\mathbb R}^4$ is zero. As in Section \ref{Whitney subsection}, 
pair up the intersections with Whitney disks, so that all Whitney arcs are disjoint in each $2$-cell. This condition on the Whitney arcs will be assumed throughout the rest of the paper.
The following lemma shows that $f$ together with a choice of Whitney disks $W$ gives rise to a ${\Sigma}_2$-equivariant map $\confs{K}{2}\longrightarrow \conf{\R^d}{2}$.
The proof of this lemma explains a basic idea underlying the connection between geometric and homotopy-theoretic approaches to obstruction theory that is established in this paper. A more involved version of this argument will be given in Section \ref{sec:equality} to show that there exists a $\Sigma_3$-equivariant map of the $5$-skeleton of $\confs{K}{3}$ to $\conf{\R^d}{3}$. 
Recall from Section \ref{Whitney subsection} that any given collection of Whitney disks may be {\em split}, so that any Whitney disk has at most one intersection with a $2$-cell of $K$.

\begin{lemma} \label{Whitney-config}  Let $K$ be a $2$-complex and $f\co K\longrightarrow {\mathbb R}^4$ a general position map such that all intersections of non-adjacent $2$-cells are paired up with split Whitney disks $W$.   This data determines a ${\Sigma}_2$-equivariant map $F_{f,W}\co \confs{K}{2}\longrightarrow \conf{\R^d}{2}$.
\end{lemma}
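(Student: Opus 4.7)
The plan is to build $F_{f,W}$ as a $\Sigma_2$-equivariant modification of $f\times f\colon\confs{K}{2}\to(\R^4)^2$, with the modification supported near the failure set and dictated by the Whitney disks. The restriction $(f\times f)|_{\confs{K}{2}}$ is manifestly $\Sigma_2$-equivariant, and its image meets the diagonal only at the discrete, $\Sigma_2$-invariant \emph{bad set} $B\subset\confs{K}{2}$ consisting of pairs $(x,y)$ with $x\ne y$ in $K$ but $f(x)=f(y)$ in $\R^4$. By general position, $B$ lies in the top strata $\sigma_i\times\sigma_j\cup\sigma_j\times\sigma_i$ for non-adjacent $2$-cells $\sigma_i,\sigma_j$, and the assumption that intersections are paired by Whitney disks organizes $B$ into cancelling pairs $\{(p_i,p_j),(q_i,q_j)\}$ together with their swaps, one pair per disk $W_{ij}$.

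On the complement of a small $\Sigma_2$-invariant neighborhood of $B$ I simply set $F_{f,W}:=f\times f$. The main construction then extends this across each Whitney disk. Fix $W_{ij}$ pairing intersections $p,q$ via boundary arcs $\alpha_i\subset\sigma_i$ and $\alpha_j\subset\sigma_j$; the square $\alpha_i\times\alpha_j\subset\sigma_i\times\sigma_j$ has the two bad points at opposite corners, and its remaining corners map under $f\times f$ to $(p,q)$ and $(q,p)$ in $\conf{\R^4}{2}$. Take a small $4$-dimensional tubular neighborhood $T$ of $\alpha_i\times\alpha_j$ containing both bad points. Parametrize $W_{ij}$ as an immersed disk $\phi\colon D^2\to\R^4$ with $\phi(S^1_+)=\alpha_i$ and $\phi(S^1_-)=\alpha_j$, and away from the bad corners define $F_{f,W}(s,t):=(\phi(r_{1/3}),\phi(r_{2/3}))$, where $r_{1/3},r_{2/3}\in D^2$ are the points at parameters $1/3$ and $2/3$ along the straight chord from $\phi^{-1}(f(s))$ to $\phi^{-1}(f(t))$. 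Extend this $2$-dimensional formula to the two normal directions of $T$ by a collar interpolation matching $f\times f$ on $\partial T$. The $\Sigma_2$-equivariant extension to the swapped neighborhood $T'\subset\sigma_j\times\sigma_i$ of $\alpha_j\times\alpha_i$ is the coordinate swap of this construction, and disjointness of Whitney arcs in each $2$-cell ensures that tubular neighborhoods associated to different Whitney disks can be chosen pairwise disjoint.

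The main technical obstacle is the extension across the bad corners themselves, since the chord degenerates there and the naive limit of the formula lies on the diagonal. This is exactly where the geometric content of the Whitney disk must be used: the local degree of $(f\times f)|_{\partial T\cap(\text{corner nbhd})}$ into $\conf{\R^4}{2}\simeq S^3$ is the sign of the corresponding intersection, and the two bad corners of $\alpha_i\times\alpha_j$ carry opposite signs. The Whitney disk provides an explicit null-cobordism realizing this cancellation — intuitively, the $2$-dimensional ``bridge'' $\alpha_i\times\alpha_j$ is the source of a framed null-cobordism between the two oppositely oriented boundary spheres — and near each bad corner I glue in a local push-off model built from the Whitney framing of $W_{ij}$, which agrees with the chord formula to first order away from the corner and produces an admissible pair of distinct points in $\R^4$ at the corner itself. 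Non-embeddedness of $W_{ij}$ (self-intersections, transverse hits on other $2$-cells) poses no obstruction since only distinctness of the output pair matters, and a small generic perturbation of $\phi$ keeps the two chord-based outputs distinct in $\R^4$. Patching these ingredients yields the continuous $\Sigma_2$-equivariant map $F_{f,W}\colon\confs{K}{2}\to\conf{\R^4}{2}$.
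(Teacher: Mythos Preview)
Your outline has the right shape --- modify $f\times f$ near the bad set using the Whitney data --- but the specific construction you propose has real gaps at exactly the points where the work lies. First, the collar interpolation on $\partial T$ is not justified: on $\alpha_i\times\alpha_j$ your chord formula produces a pair of \emph{interior} points of $W_{ij}$, which need bear no relation to $(f(s),f(t))$, so there is no evident homotopy in $\conf{\R^4}{2}$ (rather than in $(\R^4)^2$) connecting them. Second, your treatment of the bad corners is an assertion, not a construction: ``glue in a local push-off model built from the Whitney framing'' does not specify a map, and the null-cobordism remark is a degree/obstruction-theory statement that a map \emph{exists}, not one \emph{determined by} $(f,W)$ as the lemma claims. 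Third, self-intersections of $W_{ij}$ can force $\phi(r_{1/3})=\phi(r_{2/3})$ for some chords; ``a small generic perturbation'' must be argued to work for the whole $2$-parameter family of chords simultaneously, and you have not done so.

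The paper's construction sidesteps all of these issues by being deliberately asymmetric. One fixes an order $(i,j)$ and uses the Whitney disk only to produce a \emph{homotopy} $f\simeq\tilde f_{ij}$ of maps $K\to\R^4$ (the Whitney move on $\sigma_i$), supported in a disk neighborhood $D_{ij}\subset\sigma_i$ of the Whitney arc. One then lets the second coordinate $y\in\sigma_j$ control the time parameter: when $y$ lies in a disk $D_{ji}\subset\sigma_j$ around its Whitney arc, send $(x,y)\mapsto(\tilde f_{ij}(x),f(y))$; when $y$ is outside a slightly larger collar, send $(x,y)\mapsto(f(x),f(y))$; interpolate via the Whitney-move homotopy on the collar. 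Distinctness of the two outputs is then automatic from geometry: after the move $\tilde f_{ij}(\sigma_i)$ is disjoint from $f(D_{ji})$, while all original intersections $f(\sigma_i)\cap f(\sigma_j)$ lie inside $f(D_{ji})$. No extension problem at ``bad corners'' arises, and self-intersections or extraneous intersections of $W_{ij}$ are irrelevant because the second output is always $f(y)$ with $y\in\sigma_j$. You should replace the chord formula with this Whitney-move-parametrized construction.
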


{\em Proof}.
Given any pair of non-adjacent $2$-cells $\s_i, \s_j$, by assumption all intersections $f({\sigma}_i)\cap f({\sigma}_j)$ are paired up with Whitney disks $W_{ij}$, and the Whitney arcs in each $2$-cell are disjoint. The self-intersections and intersections of the Whitney disks will not be relevant in the following argument because the simplicial diagonal $\Delta$ is missing in the configuration space $\confs{K}{2}$. 
Since the Whitney disks are split, each $W_{ij}$ intersects a single $2$-cell $\s_k$ as in Figure \ref{Whitney}. We treat the special case that $\s_k$ is either $\s_i$ or $\s_j$ right away: if $W_{ij}$ intersects $\s_i$, perform the Whitney move along $W_{ij}$ on $\s_i$; if it intersects $\s_j$ then perform the Whitney move of $\s_j$. This results in self-intersections of either $f(\s_i)$ or $f(\s_j)$ which are irrelevant since we are working with the simplicial configuration space $\confs{K}{2}$, and so the map $F_{f,W}$ does not need to be defined on $\s_i\times\s_i$, $\s_j\times \s_j$. Thus the remaining intersections of $W_{ij}$ are with $2$-cells $\s_k$, $k\neq i,j$.

Next we describe the desired map $F_{f,W}\co \confs{K}{2}\longrightarrow \conf{\R^4}{2}$. By general position the $1$-cells and the $2$-cells of $K$ are mapped in disjointly by $f$, so $f\times f$ defines a $\Sigma_2$-equivariant map on the $3$-skeleton of $\confs{K}{2}$. Thus the goal is to extend it to the $4$-skeleton, that is to define $F_{f,W}$ on each product of two non-adjacent $2$-cells $\s_i\times \s_j$. For each such pair $\s_i, \s_j$ we pick an order $(i,j)$; for the other product $\s_j\times \s_i$ the map $F_{f,W}$ will be defined using $\Sigma_2$ equivariance.

In each $2$-cell $\s_i$ consider disjoint disk neighborhoods of the Whitney arcs for the intersections of $f(\s_i)$ with other $2$-cells; the disk neighborhoods corresponding to $W_{ij}$ are denoted $D_{ij}$, Figure \ref{fig:isotopy}. (In general $W_{ij}$ denotes the entire collection of Whitney disks for $f(\s_i)\cap f(\s_j)$, and $D_{ij}$ denotes the collection of corresponding disk neighborhoods; we illustrate the case of a single component since the argument in general is directly analogous.) If $f(\s_i)\cap f(\s_j)=\emptyset$, $D_{ij}$ is defined to be empty. Now consider the map $\widetilde f_{ij}\co K\longrightarrow {\mathbb R}^4$ which coincides with $f$ in the complement of the disk $D_{ij}$. In this disk $\widetilde f_{ij}$ is defined to be the result of the Whitney move on $f(\s_i)$ along the Whitney disk $W_{ij}$, making $\widetilde f({\sigma}_i)$ disjoint from $\widetilde f({\sigma}_j)$. If $W_{ij}$ intersected another $2$-cell $\s_k$ as in Figure \ref{Whitney}, as a result of this move  $\widetilde f_{ij}({\sigma}_i)$ intersects $\widetilde f_{ij}(\s_k)=f(\s_k)$.

\begin{figure}[ht]
\centering
\includegraphics[height=3.6cm]{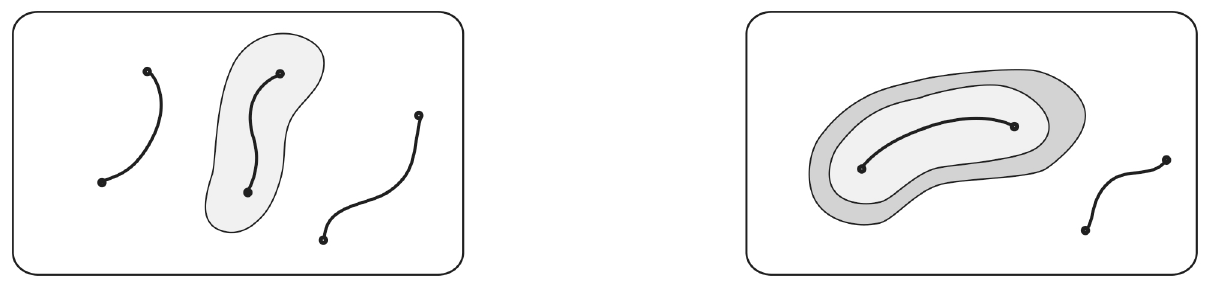}
{\scriptsize     
    \put(-260,10){${\sigma}^{}_i $}
       \put(-356,43){$D_{ij}$}
       \put(-351,26){$p_1$}
       \put(-330,80){$q_1$}
       \put(-58,61){$C_{ji}$}
       \put(-181,10){${\sigma}^{}_j $}
       \put(-100,47){$D_{ji}$}
       \put(-126,33.5){$p_2$}
       \put(-72,62){$q_2$}
    }  
    \caption{Defining the map $\s_i\times \s_j\longrightarrow \conf{\R^4}{2}$. Here $f(p_1)=f(p_2)$ and $f(q_1)=f(q_2)$ are two double points in $f(\s_i)\cap f(\s_j$).}
\label{fig:isotopy}
\end{figure}

Consider a collar $C_{ji}=\partial D_{ji}\times I$ on $\partial D_{ji}$ in $\s_j\smallsetminus {\rm int}(D_{ji})$, Figure \ref{fig:isotopy}. The collars are chosen small enough so that they are disjoint from each other in $\s_j$ for various Whitney arcs. Define 
\begin{equation} \label{collar complement} F_{f,W}|_{\s_i \times (\s_j\smallsetminus (D_{ji}\cup C_{ji}))} := (f\times f)|_{\s_i \times (\s_j\smallsetminus (D_{ji}\cup C_{ji}))}.
\end{equation}

This defines a map into the configuration space $\conf{\R^4}{2})$ since $f(\s_i)$ is disjoint from  $ f(\s_j\smallsetminus (D_{ji}\cup C_{ji}))$. On ${\s_i \times D_{ji}}$ the map is defined using the result of the Whitney move:
\begin{equation} \label{map in disk}  F_{f,W}|_{\s_i \times D_{ji}}:= (\widetilde f_{ij}\times \widetilde f_{ij}) |_{\s_i \times D_{ji}}= (\widetilde f_{ij}\times f) |_{\s_i \times D_{ji}}
\end{equation}

It remains to define $F_{f,W}$ on $\s_i\times C_{ji}$ interpolating between the maps (\ref{collar complement}), (\ref{map in disk}).
If the Whitney disk $W_{ij}$ was framed and embedded then the original map $f$ and the result of the Whitney move $\widetilde f_{ij}$ would be isotopic, with the isotopy supported in the interior of $D_{ij}$. In general, without these assumptions, these maps are homotopic rather than isotopic. Denote by $f^t_{ij}\co K\times I\longrightarrow \R^4$ this homotopy $f\simeq \widetilde f_{ij}$ given by the Whitney move, and supported in $D_{ij}$.

Identify $(x,y,t)\in \s_i\times \partial D_{ji}\times [0,1]$ with $(x,y_t)\in \s_i\times C_{ji}$ using the product structure on the collar $C_{ji}$. Using this identification,  the following map sends a point $(x,y_t)$ to $(f^t_{ij}(x), f(y_t))$:
\begin{equation} \label{homotopy def}F_{f,W}|_{\s_i \times C_{ji}}:= (f^t_{ij}\times f^t_{ij}) |_{\s_i \times C_{ji}} = (f^t_{ij}\times f) |_{\s_i \times C_{ji}}.
\end{equation}

 This matches $\widetilde f_{ij}\times f$ on $\s_i\times \partial D_{ji}$ and $f\times f$ on $\s_i\times \partial (D_{ji}\cup C_{ji})$. 
The result is a continuous map $\s_i\times \s_j\longrightarrow \conf{\R^4}{2}$, giving rise to a desired ${\Sigma}_2$-equivariant map $\confs{K}{2}\longrightarrow \conf{\R^d}{2}$.
\qed

A key point in the above proof is that even though the result of the Whitney move $\widetilde f_{ij}({\sigma}_i)$ intersects $\widetilde f_{ij}(\s_k)=f(\s_k)$, this does not affect the definition of the map $F_{f,W}$ on $\s_i\times \s_k$. The assumption of Lemma \ref{Whitney-config} is insufficient for producing a map of $3$-point configuration spaces, as we make precise in the next subsection.

\subsection{An obstruction from intersections of Whitney disks} \label{sec:higher geometric} 
We are now in a position to formulate our geometric embedding obstruction for $2$-complexes in $\R^4$ which is defined when the van Kampen obstruction vanishes.
Under this assumption, following Lemma \ref{trivial vk} consider a map $f\co K\longrightarrow {\mathbb R}^4$ where the intersection number of any two non-adjacent $2$-cells $f(\s_i)\cap f(\s_j)$ in ${\mathbb R}^4$ is zero. As in Section \ref{sec: from Whitney to config}, consider a collection $W=\{ W_{ij} \}$ of Whitney disks for $f(K)$, where $W_{ij}$ denotes the Whitney disks for $f(\s_i)\cap f(\s_j)$. As above, the Whitney arcs are assumed to be disjoint in each $2$-cell $\s_i$. 

The obstruction $\gobs{K}$, defined below, depends on the choice of $f$ and of Whitney disks $W$. Indeed, in the context of obstruction theory one expects that higher obstructions generally depend on choices of trivializations of lower order obstructions. Recall from Section \ref{sec: Constructing the obstruction} that the obstruction $\obs{K}$ to lifting to a $\Sigma_3$-equivariant map $\conf{K}{3}\longrightarrow \conf{\R^4}{3}$ depends on the choice of a $\Sigma_2$-equivariant map $f_2\co \conf{K}{2}\longrightarrow \conf{\R^4}{2}$. Moreover, by Lemma \ref{Whitney-config} the geometric data -- $f$ and $W$ -- determine such a map $f_2$ on the simplicial configuration space $\confs{K}{2}$. The relation between the two theories is extended further in Section \ref{sec:equality}.

\begin{definition}[\sl The obstruction $\gobs{K}$] \label{def: W3} Let $K, f, W$ be as above, and endow the $2$-cells of $K$ with arbitrary orientations. The orientation of Whitney disks $W_{ij}$, where $(i,j)$ is an {\em ordered} pair, is induced from the orientation on its boundary which is oriented from $-$ intersection to $+$ intersection along $f(\s_i)$  and from from $+$ to $-$ along $f(\s_j)$.
Consider the $6$-cochain:
\begin{equation} \label{eq: o3 cocohain} 
w_3\co C_6(\confs{K}{3})\longrightarrow {\mathbb Z},
\end{equation}
defined as follows.
Let ${\sigma}_i, {\sigma}_j, {\sigma}_k$ be $2$-cells of $K$ which pairwise have no vertices in common, and define 
\begin{equation} \label{eq: 02 def}
w_3(\s_i\times\s_j\times\s_k)\, =\,  W_{ij}\cdot f({\sigma}_k)   + W_{jk}\cdot f({\sigma}_i) +W_{ki}\cdot f({\sigma}_j),
\end{equation}
where the algebraic intersection numbers are defined using the orientation convention discussed above.
Note that changing the order of $i,j$ reverses the orientation of $W_{ij}$, so the cochain $w_3$ in (\ref{eq: 02 def}) is $\Sigma_3$ equivariant, where $\Sigma_3$ acts on $\Z$ according to the sign representation.
This $6$-cochain is a cocycle since it is a top-dimensional cochain on $\confs{K}{3}$. The resulting cohomology class is denoted $$\gobs{K, f, W}\in H^{6}_{\Sigma_3}(\confs{K}{3}; \Z[(-1)]).$$
When $f, W$ are clear from the context, the notation will be abbreviated to $\gobs{K}$.
\end{definition}

It is worth noting that the local move in Figure \ref{fig:Whitney2} shifts the intersection numbers between the terms of (\ref{eq: 02 def}); it is the sum that gives a meaningful invariant (see also Remark \ref{rem: triple intersection} below.)
Geometrically (\ref{eq: 02 def}) measures intersection numbers that are an obstruction to finding disjoint embedded Whitney disks needed to construct an embedding $K \hookrightarrow \R^4$. The definition depends on various choices: the pairing of $\pm$ intersections of $f(\s_i)\cap f(\s_j)$, and choices of Whitney arcs and of Whitney disks. By comparing it to the obstruction $\obs{K}$ in the next section, we show that it really depends only on the homotopy class of the map $F_{f,W}$ constructed in Lemma \ref{Whitney-config}, a fact that is not apparent from the geometric framework of the above definition. 

In addition to these cell-wise intersection considerations, of course properties of the obstruction $\gobs{K}$ depend on the cohomology of the configuration space $\confs{K}{3}$. This aspect of the obstruction is discussed in Lemma \ref{trivial vk3}, and the consequence of its vanishing is the subject of Section \ref{sec: Whitney towers}.

\begin{remark} \label{rem: Wh disk in example}
It is not difficult to see that in the example of \cite{FKT} there is a map of the $2$-complex into $\R^4$ with precisely two $2$-cells intersecting in two algebraically canceling points, with a Whitney disk intersecting one other $2$-cell as in Figure \ref{Whitney}. It follows that the corresponding cochain (\ref{eq: o3 cocohain}) is non-zero on precisely one $6$-cell of $\confs{K}{3}$; this example is discussed in detail in Section \ref{sec:Examples}.
\end{remark}

\begin{remark} \label{rem: triple intersection}
Our Definition \ref{def: W3} extends to the setting of $2$-complexes in $\R^4$ the idea of using intersections of Whitney disks with surfaces that has been widely used in $4$-manifold topology.
The construction of this type in the simplest {\em relative} case: $K^2=\coprod^3 D^2$, the disjoint union of three disks whose boundary curves form a given three-component link $L$ in $S^3=\partial D^4$, is a reformulation of Milnor's $\bar\mu$-invariant \cite{Milnor} $\bar\mu_{123}(L)$, sometimes referred to as the triple linking number. Such intersections were used to define an obstruction to representing three homotopy classes of maps of $2$-spheres into a $4$-manifold by maps with disjoint images in \cite{Matsumoto, Yamasaki}, and in the non-simply connected setting in \cite{ST0}. A version considering self-intersections to define the Arf invariant and the Kervaire-Milnor invariant was given in \cite[10.8A]{FQ}, and an extension to non-simply connected $4$-manifolds in \cite{ST0}.
\end{remark}

The definition of $\gobs{K}$ shares some of the nice features of the geometric definition (\ref{eq: vk cohain}) of the van Kampen obstruction. Specifically, we will now describe the higher order analogue (``stabilization'') of the finger move homotopy in Figure \ref{fig:FingerMove} and of Lemma \ref{trivial vk}.

\begin{definition}[\sl Stabilization] \label{def:stabilization}
This operation applies to any two $2$-cells $\s_1, \s_2$ and a $1$-cell $\nu$ of $K$ which are all pairwise non-adjacent, Figure \ref{fig:Whitney3}a. 
Perform a finger move introducing two canceling $\s_1$-$\s_2$ intersections, and let $W'_{12}$ denote the resulting embedded Whitney disk pairing these two intersection, Figure \ref{fig:Whitney3}b. Also consider $S^2_{\nu}$, a small $2$-sphere linking $f(\nu)$ in $\R^4$. The final modification applies to the Whitney disk: $\overline{W}_{12}$ is formed as a connected sum of $W_{12}$ and $S^2_{\nu}$, Figure \ref{fig:Whitney3}c.
\end{definition}

\begin{figure}[ht]
\centering
\includegraphics[height=3.7cm]{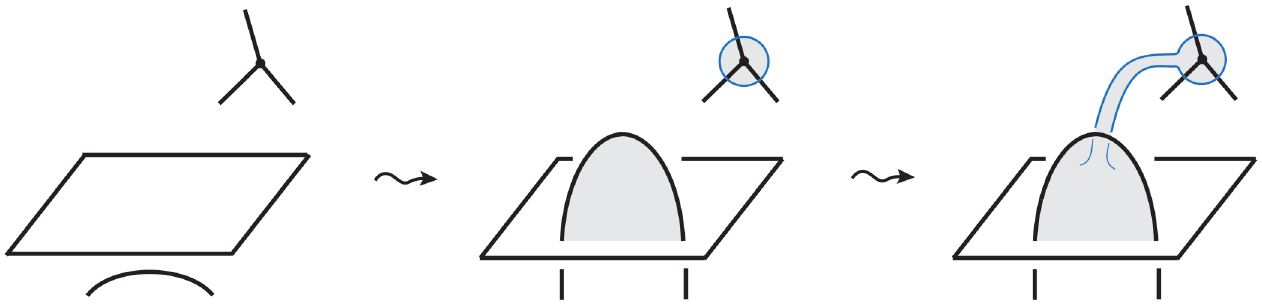}
{\scriptsize     
    \put(-356,2){$f({\sigma}^{}_1) $}
       \put(-363,41){$f({\sigma}^{}_2) $}
       \put(-192,2){$\overline{f}({\sigma}^{}_1) $}
       \put(-199,39){$\overline{f}({\sigma}^{}_2) $}
       \put(-342,86){$f(\nu)$}
      \put(-437,-2){(a)}
        \put(-63,30){$\overline{W}_{12}$}
       \put(-165,83){$S^2_{\nu} $}
       \put(-225,30){$W'_{12}$}
       \put(-110,-2){(c)}
       \put(-271,-2){(b)}
    }  
    \caption[caption]{Stabilization (modifying the obstruction cocycle by a coboundary)
    }
\label{fig:Whitney3}
\end{figure}

\begin{proposition} 
Let $(\overline{f}, \overline{W})$ be the result of a stabilization applied to $(f,W)$. Then the $\Sigma_2$-equivariant map $F_{\overline{f}, \overline{W}}\co \confs{K}{2}\longrightarrow \conf{\R^4}{2}$ associated to $(\overline{f}, \overline{W})$ in Lemma \ref{Whitney-config} is $\Sigma_2$-equivariantly homotopic to $F_{f,W}$.
\end{proposition}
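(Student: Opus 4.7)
The plan is to factor the stabilization into two elementary moves and exhibit an equivariant homotopy for each. Let $\widetilde{W} := W \cup \{W'_{12}\}$ denote the intermediate Whitney collection that uses the embedded disk $W'_{12}$ of Figure \ref{fig:Whitney3}(b) \emph{before} tubing into $S^2_\nu$, so that the stabilization decomposes as
\[
(f, W) \;\rightsquigarrow\; (\overline{f}, \widetilde{W}) \;\rightsquigarrow\; (\overline{f}, \overline{W}).
\]
I will produce equivariant homotopies $F_{f,W} \simeq F_{\overline{f}, \widetilde{W}}$ and $F_{\overline{f}, \widetilde{W}} \simeq F_{\overline{f}, \overline{W}}$ separately, then concatenate.

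For the first, choose a $1$-parameter family $f_s \colon K \to \R^4$ realizing the finger move, with $f_0 = f$ and $f_1 = \overline{f}$, supported in a small $4$-ball $B$ disjoint from every Whitney disk of $W$ and from every $f(\s_j)$ with $j \neq 1, 2$. Once the two new $\s_1 \cap \s_2$ intersections are born, they span an embedded Whitney disk $W'_{12,s} \subset B$ whose associated Whitney move is ambient-isotopic to reversing the finger in $B$. Plugging the family $(f_s, W \cup \{W'_{12,s}\})$ into the construction of Lemma \ref{Whitney-config} yields a continuous $\Sigma_2$-equivariant family of maps on $\confs{K}{2}$ interpolating between $F_{f,W}$ and $F_{\overline{f}, \widetilde{W}}$. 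Continuity through the instant the Whitney disk is born is automatic because the disk neighborhood $\overline{D}_{21,s}$ of the Whitney arc shrinks to a point simultaneously with the Whitney disk itself.

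For the second homotopy, the two Whitney disks $W'_{12}$ and $\overline{W}_{12} = W'_{12} \# S^2_\nu$ yield Whitney-moved images on $\s_1$ differing by a $2$-sphere bubble parallel to $S^2_\nu$ and localized near $\nu$. For each fixed $k_2$, this bubble is null-homotopic in $\R^4 \setminus \{\overline{f}(k_2)\} \simeq S^3$ since $\pi_2(S^3) = 0$. The task is to assemble these pointwise null-homotopies into a single $\Sigma_2$-equivariant map $\confs{K}{2} \times I \to \conf{\R^4}{2}$ by contracting the bubble along a smoothly chosen family of sweeps, parameterized over $\confs{K}{2}$.

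The hard part will be carrying out this parameterized bubble-contraction globally. The potential trouble arises when $k_2$ lies on a $2$-cell $\s_j$ adjacent to $\nu$: the image $\overline{f}(\s_j)$ enters every tubular neighborhood of $f(\nu)$ and generically meets the bubble in a single point, so a naive sweep across $\nu$ would pass through $\overline{f}(k_2)$ for some $k_2 \in \s_j$. The resolution is to perform the contraction not across $\nu$ itself but along a generic direction in the $3$-dimensional normal bundle to $\nu$, avoiding both $\nu$ and the finitely many $1$-dimensional slivers contributed by cells $\s_j$ adjacent to $\nu$. These slivers form a submanifold of positive codimension in the $3$-dimensional normal fiber, leaving a codimension-$0$ family of admissible sweep directions; a $\Sigma_2$-equivariant choice is made by working on a fundamental domain for the swap action and extending equivariantly. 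The continuity and equivariance of the resulting homotopy then reduce to a standard equivariant transversality verification.
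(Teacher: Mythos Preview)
Your two-step decomposition matches the paper's implicit structure, and step 1 is fine. But in step 2 you manufacture a difficulty that is not there, because you have not used the key structural feature of the construction in Lemma~\ref{Whitney-config}: the Whitney disk for the pair $(\sigma_1,\sigma_2)$ enters the definition of $F_{\overline f,\overline W}$ \emph{only} on the single $4$-cell $\sigma_1\times\sigma_2$ (and its $\Sigma_2$-translate). On every other product $\sigma_i\times\sigma_j$ the map is unchanged when $W'_{12}$ is replaced by $\overline W_{12}$. Hence in your second homotopy the point $k_2$ ranges only over $\sigma_2$, not over all of $K$.

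Now recall that stabilization requires $\sigma_1,\sigma_2,\nu$ to be pairwise non-adjacent. Therefore $f(\sigma_2)$ lies entirely outside a neighborhood of $f(\nu)$, and in that neighborhood the disks $W'_{12}$ and $\overline W_{12}=W'_{12}\# S^2_\nu$ are isotopic rel boundary in the complement of $f(\sigma_1)\cup f(\sigma_2)$. This isotopy feeds directly into the recipe of Lemma~\ref{Whitney-config} to give the desired homotopy on $\sigma_1\times\sigma_2$; no parameterized bubble-contraction, no transversality argument, and no worry about cells adjacent to $\nu$ is needed. This is exactly the paper's argument. Your proposed resolution via generic sweep directions would also work, but it is solving a problem that the localization already dissolves.
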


{\em Proof.} The Whitney disk $\overline{W}_{12}$ is used only in the restriction of the map $F_{\overline{f}, \overline{W}}$ to $\s_1\times \s_2$ (and equivariantly to $\s_2\times \s_1$). 
When $f(\nu)$ and all $2$-cells adjacent to it are omitted from the picture, the Whitney disks $\overline{W}_{12}, W'_{12}$ in Figure \ref{fig:Whitney3} are isotopic. Thus it is clear from the proof of Lemma \ref{Whitney-config} that the maps of configurations spaces corresponding to these two Whitney disks are homotopic. (Note that the interior of $\overline{W}_{12}$ is disjoint from $f(\s_1)$ since $\s_1, \nu$ were assumed to be non-adjacent. Thus the result of the Whitney move on $\overline{f}(\s_2)$ along $\overline{W}_{12}$ is disjoint from $f(\s_1)$.) Moreover, the map $f$ in Figure  \ref{fig:Whitney3}a is isotopic to the result of the Whitney move applied to $\overline{f}$ in Figure  \ref{fig:Whitney3}b, so the induced maps on configuration spaces are again homotopic.
\qed

We are in a position to formulate the analogue of Lemma \ref{trivial vk} for the new obstruction.

\begin{lemma} \label{trivial vk3} 
Any cocycle representative of the cohomology class $$\gobs{K,f, W} \in H^{6}_{\Sigma_3}(\confs{S}{3}; \Z[-1])$$ may be realized as the cocycle $w_3(K,f',W')$ associated to some map $f'$ and Whitney disks $W'$.
In particular, if the cohomology class $\gobs{K,f,W}$ is trivial then there exist $f', W'$ whose associated cocycle is identically zero.
\end{lemma}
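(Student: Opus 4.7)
The plan is to mirror the strategy used for Lemma~\ref{trivial vk}, replacing the role of the finger move by the stabilization operation of Definition~\ref{def:stabilization}. Specifically, I would show that each stabilization modifies the cocycle $w_3$ by the coboundary of a specific elementary equivariant $5$-cochain, and then argue that these elementary coboundaries generate all of $B^{6}_{\Sigma_3}(\confs{K}{3}; \Z[(-1)])$. Since any cocycle in the cohomology class $\gobs{K,f,W}$ differs from $w_3(K,f,W)$ by a coboundary, we can then realize it by a finite sequence of stabilizations.

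The main technical step is to compute precisely how stabilization at $(\s_1, \s_2, \nu)$ changes the cocycle. The new Whitney disk $\overline{W}_{12}$ is the connect sum of the embedded Whitney disk $W'_{12}$ from the finger move with the linking $2$-sphere $S^2_\nu$. The disk $W'_{12}$ is embedded and, up to a small isotopy, can be arranged disjoint from all other $2$-cells, so its contribution to $w_3$ is zero. On the other hand, $S^2_\nu$ intersects a $2$-cell $\s_3$ algebraically in the incidence coefficient $[\nu:\partial\s_3]$. Hence the only cells $\s_1\times\s_2\times\s_3$ (with $\s_3$ non-adjacent to both $\s_1$ and $\s_2$) on which the intersection term $\overline{W}_{12}\cdot f(\s_3)$ changes are those with $\nu \subset \partial\s_3$, and the change there equals $\pm [\nu:\partial\s_3]$ with a sign determined by the orientation convention of Definition~\ref{def: W3}. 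This is precisely the value of $\delta u_{\s_1,\s_2,\nu}$ on $\s_1\times\s_2\times\s_3$, where $u_{\s_1,\s_2,\nu}$ is the $\Sigma_3$-equivariant elementary $5$-cochain supported on the $\Sigma_3$-orbit of $\s_1\times\s_2\times\nu\in C_5(\confs{K}{3})$ with the sign character.

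Since the cells $\s_1\times\s_2\times\nu$ (with all three simplices pairwise non-adjacent, $\s_1,\s_2$ of dimension $2$ and $\nu$ of dimension $1$) are the only $5$-cells of $\confs{K}{3}$, their duals give a basis of $C^5_{\Sigma_3}(\confs{K}{3}; \Z[(-1)])$. Consequently every equivariant $5$-coboundary can be written as an integer linear combination of the coboundaries $\delta u_{\s_1,\s_2,\nu}$, and by performing the corresponding (signed) numbers of stabilizations, any such coboundary is realized geometrically. Starting from $(f,W)$ and applying a sequence of stabilizations realizing $\xi - w_3(K,f,W)$ for a given cocycle $\xi$ cohomologous to $w_3(K,f,W)$, we obtain a new pair $(f', W')$ whose associated cocycle is $\xi$, which proves the first assertion. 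The particular case $\xi = 0$ then yields the final conclusion.

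The step I expect to require the most care is verifying the signs: one has to track the orientations on $W'_{12}$ and on $S^2_\nu$ induced by the connect-sum construction, compare them to the induced orientation of $\overline{W}_{12}$ coming from the convention in Definition~\ref{def: W3}, and confirm that the result really matches the coboundary of $u_{\s_1,\s_2,\nu}$ and not its negative. Once the sign convention is pinned down in one instance, equivariance propagates it consistently to the rest of the $\Sigma_3$-orbit.
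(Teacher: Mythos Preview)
Your proposal is correct and follows essentially the same approach as the paper: the paper's proof also observes that a single stabilization at $(\s_1,\s_2,\nu)$ changes $w_3$ by $\pm\delta C_{\s_1,\s_2,\nu}$ (with the sign controlled by the choice of orientation of $S^2_\nu$), and that these elementary coboundaries generate all of $B^6_{\Sigma_3}$, so an arbitrary coboundary is realized by a sequence of stabilizations. Your write-up is more detailed, in particular in spelling out why $\overline{W}_{12}\cdot f(\s_3)$ picks up exactly the incidence coefficient $[\nu:\partial\s_3]$ and in identifying the $5$-cells of $\confs{K}{3}$; the only small clarification is that the sign is not fixed by the conventions of Definition~\ref{def: W3} alone but can be chosen freely via the orientation of $S^2_\nu$, which is what lets you realize both $+\delta u$ and $-\delta u$.
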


{\em Proof.} Consider a generator $C_{\s_1,\s_2,\nu}$ of $\Sigma_3$-equivariant $5$-cochains on $\confs{S}{3}$, corresponding to non-adjacent $2$-cells  $\s_1, \s_2$ and $1$-cell $\nu$ of $K$. 
The stabilization operation $(f,W)\mapsto(\overline{f},\overline{W})$, shown in Figure \ref{fig:Whitney3}, changes the cocycle $w_3(K,f,W)$ by a coboundary $\pm \delta C_{\s_1,\s_2,\nu}$, where the sign depends on the orientation of the sphere $S^2_{\nu}$. Thus changing $w_3(K,f,W)$ by any coboundary may be realized by a suitable sequence of stabilizations.
\qed

As we explain in the next subsection, the vanishing of the cohomology class $\gobs{K,f, W}$ has a geometric consequence: the existence of another layer of Whitney disks, in turn leading to a higher order obstruction.

\subsection{Higher order obstructions from Whitney towers} \label{sec: Whitney towers}
The notion of Whitney towers encodes higher order intersections of surfaces in $4$-manifolds, where the vanishing of the intersections inductively enables one to find the next layer of Whitney disks.  In a sense Whitney towers approximate an embedded disk as the number of layers increases. A closely related notion of capped gropes \cite[Chapter 2]{FQ} is extensively used in the theory of topological $4$-manifolds: they may be found in the context of surgery and of the $s$-cobordism conjecture where surfaces have duals, cf. Proof of Theorem 5.1A in \cite{FQ}. We will use the notion of Whitney towers and their intersection theory developed in \cite{ST, ST1}.  Only a brief summary of the relevant definitions is given below; the reader is referred to the above references for details.

In the setting of this paper the ambient $4$-manifold is $\R^4$, and the surfaces are the images of non-adjacent $2$-cells of a $2$-complex $K$ under a general position map $f\co K\longrightarrow \R^4$. Moreover, we will use the {\em non-repeating} version of Whitney towers considered in \cite{ST1}.

Whitney towers have a parameter, {\em order}, and are defined inductively.  Whitney towers of order $0$ are just surfaces in general position in a $4$-manifold.  Their intersection numbers may be used to define the van Kampen obstruction, as discussed in Section \ref{sec:van Kampen}.
A Whitney tower of order $1$ is a collection of surfaces with trivial intersection numbers, together with a collection of Whitney disks pairing up the intersection points. (As in the preceding sections, all Whitney disks are assumed to be framed, and have disjoint boundaries.) This is the setting for the obstruction in Definition \ref{def: W3}. Note that the Whitney tower incorporates both the map $f$ and the Whitney disks $W$, so $\gobs{K, f, W}$ may be thought of as being defined in terms of a Whitney tower.

All surface stages and intersection points between them in a general Whitney tower are inductively assigned an order in $\Z_{\geq 0}$ as follows.
The base of the construction ({\em order $0$}) is a collection of the original immersed surfaces in $\R^4$.
All surfaces of higher order are Whitney disks pairing up intersections of surfaces of lower order.
The order of an intersection point of surfaces of orders $n_1, n_2$ is defined to be $n_1+n_2$. A Whitney disk pairing up intersection points of order $n$ is said to have order $n+1$. 

Finally, a Whitney tower $W$ of order $n+1$ is defined inductively as a Whitney tower of order $n$ together with a collection of Whitney disks pairing up all intersections of order $n$.
For example, a tower of order $2$ is illustrated on the left in Figure \ref{fig:WhitneyTower}, with the surfaces $\s$ of order $0$ and Whitney disks $V$ of order $1$ and $W$ of order $2$. 

\begin{definition} \label{f admits Whitney tower}
A map $f\co K\longrightarrow \R^4$ {\em admits a Whitney tower} of order $n$ if this condition holds for the images under $f$ of each $n$-tuple of pairwise non-adjacent $2$-cells. 
\end{definition}

We would like to emphasize that in general Definition \ref{f admits Whitney tower} refers not a single Whitney tower, but rather there is a Whitney tower of height $n$ for each $n$-tuple of pairwise non-adjacent $2$-cells.
Note that given a $2$-complex $K$, an obstruction to the existence of a map $f$ admitting a Whitney tower of order $n$ for any $n\geq 1$ is in particular an obstruction to the existence of an embedding $K\hookrightarrow \R^4$.

\begin{figure}[ht]
\centering
\includegraphics[height=3cm]{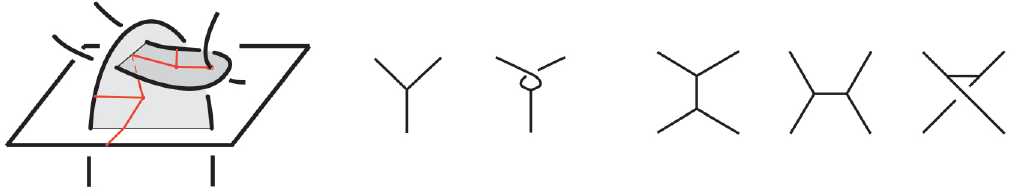}
{\small
\put(-435,27){$\s_j$}
\put(-425,3){$\s_i$}
\put(-435,74){$\s_k$}
\put(-353,82){$\s_l$}
}
{\scriptsize
\put(-380,32){$V$}
\put(-370,48){$W$}
}
{\large 
\put(-250,41){$+$}
\put(-201,41){$= 0 =$}
\put(-122,41){$-$}
\put(-60,41){$+$}
}
\caption{Left: a Whitney tower of order $2$ and the associated tree.  Right: the AS relation and the IHX relation}
\label{fig:WhitneyTower}
\end{figure}

With this terminology at hand, we are ready to formulate a geometric consequence of Lemma \ref{trivial vk3}.

\begin{corollary} \label{cor: height2}
Let $f\co K\longrightarrow \R^4$ be a map 
admitting a Whitney tower of order $1$. (In other words, $f$ is an immersion with double points paired up with Whitney disks $W$, as in Section \ref{sec:higher geometric}.) Suppose the cohomology class $$\gobs{K,f,W} \in H^{6}_{\Sigma_3}(\confs{S}{3};\Z[-1])$$ is trivial. Then there exists a map $\widetilde{f}\co K\longrightarrow \R^4$, obtained from $f$ by stabilizations, which admits a Whitney tower of order $2$.
\end{corollary}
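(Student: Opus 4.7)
The plan is to combine Lemma \ref{trivial vk3} with the local intersection-transfer move of Figure \ref{fig:Whitney2}. Since the cohomology class $\gobs{K,f,W}$ vanishes, I would first apply Lemma \ref{trivial vk3} to obtain (after a sequence of stabilizations of the pair $(f,W)$) a new pair, still denoted $(f,W)$, such that the cocycle $w_3$ of Definition \ref{def: W3} is identically zero. In other words, for every triple $(\s_i, \s_j, \s_k)$ of pairwise non-adjacent $2$-cells one has the geometric identity
\[
W_{ij}\cdot f(\s_k)\; +\; W_{jk}\cdot f(\s_i)\; +\; W_{ki}\cdot f(\s_j)\;=\;0.
\]

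Next, I would invoke the local move of Figure \ref{fig:Whitney2}. Fixing a triple $(\s_i,\s_j,\s_k)$, that move converts a transverse point of $W_{ij}\cap f(\s_k)$ into a transverse point of $W_{ik}\cap f(\s_j)$ (with the sign prescribed by the orientation convention preceding (\ref{eq: 02 def})), and its reverse and cyclic variants allow an interchange of intersections between any two of the three terms appearing in $w_3(\s_i\times\s_j\times\s_k)$. Crucially, since these moves transfer intersections between terms, they preserve the signed triple sum; starting from the identity above, an iterated application of the moves redistributes the intersections until each algebraic intersection number $W_{ij}\cdot f(\s_k)$, $W_{jk}\cdot f(\s_i)$, and $W_{ki}\cdot f(\s_j)$ equals zero individually.

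Once each of these three signed counts vanishes, the intersection points in $W_{ij}\cap f(\s_k)$ may be grouped into pairs of opposite sign and connected by disjoint Whitney arcs in $W_{ij}$ and $f(\s_k)$; each such Whitney-arc pair is then spanned by an immersed Whitney disk in $\R^4$, producing the second layer of Whitney disks for this triple. Doing this for every triple of pairwise non-adjacent $2$-cells of $K$ yields the required order-$2$ (non-repeating) Whitney tower in the sense of Definition \ref{f admits Whitney tower}, and the resulting map is the desired $\widetilde f\colon K\to \R^4$.

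The subtlety to address is ensuring that the moves of Figure \ref{fig:Whitney2}, applied for different triples, are compatible and do not destroy the order-$1$ Whitney tower already present. This is handled by noting that each move is supported in an arbitrarily small neighborhood of an arc in a single $2$-cell together with a neighborhood of the relevant Whitney disk, and the only new singularities it may introduce are self-intersections of Whitney disks or intersections with $2$-cells adjacent to $\s_i,\s_j,\s_k$—neither of which enters the non-repeating intersection theory of \cite{ST1} or the definition of $\gobs{K}$. Hence the moves can be carried out triple by triple without interference, completing the construction.
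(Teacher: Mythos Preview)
Your argument is correct and shares its first step with the paper: both invoke Lemma~\ref{trivial vk3} to pass, via stabilizations, to a pair $(f',W')$ for which the cocycle $w_3$ vanishes identically. The difference lies in what comes next. The paper simply cites \cite[Theorem~2]{ST}, which asserts that a Whitney tower of order $n$ with vanishing intersection invariant $\tau_n$ is regularly homotopic to one of order $n{+}1$; here $n=1$. You instead reprove that special case by hand, using the transfer move of Figure~\ref{fig:Whitney2} to redistribute the intersections among the three terms of~(\ref{eq: 02 def}) until each term vanishes separately, and then building the second-layer Whitney disks directly. That is a legitimate and more self-contained route, and is essentially how the order-$1$ instance of the Schneiderman--Teichner theorem is proved. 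One small imprecision in your non-interference discussion: the move of Figure~\ref{fig:Whitney2} is a finger move on $\sigma_k$ and does create new order-$0$ intersections $\sigma_i\cap\sigma_k$ between \emph{non-adjacent} cells (not adjacent ones); the point is that these come equipped with the new Whitney disk $W_{ik}$, so the order-$1$ structure is preserved and only the triple $(\sigma_i,\sigma_j,\sigma_k)$ sees the change. With that correction your argument goes through; the paper's version is terser because it outsources exactly this geometric work to~\cite{ST}.
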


Indeed, by Lemma \ref{trivial vk3} there exists a map $f'$ and Whitney disks $W'$ such that for each triple of (pairwise non-adjacent) $2$-cells, the intersection invariant (\ref{eq: 02 def}) is trivial. By \cite[Theorem 2]{ST}, the map $\widetilde{f}$ is regularly homotopic to $f'$ which admits a Whitney tower of order $2$, as claimed. 

It follows from Lemma \ref{trivial vk} that if $K$ has trivial van Kampen's obstruction, there exists a map of $K$ into $\R^4$ which admits a Whitney tower of height $1$. Corollary \ref{cor: height2} gives the analogue for the next obstruction: if the class $\gobs{K,f,W}=0$, there exists a map admitting a Whitney tower of height $2$. To define higher obstruction theory, we will now discuss the intersection invariants of Whitney towers.

The obstruction cochain in equation (\ref{eq: 02 def}) was defined using an explicit formula with intersection numbers between Whitney disks and $2$-cells. An elegant way of formulating the intersection invariant \cite{ST} for a general Whitney tower is in terms of trees, described next.

Each unpaired intersection point $p$ of a Whitney tower determines a trivalent tree $t_p$: the trivalent vertices correspond to Whitney disks and the leaves are labeled by (distinct) $2$-cells of $K$. The tree embeds in the Whitney tower, as shown on the left in Figure \ref{fig:WhitneyTower}, and it inherits a cyclic orientation of each trivalent vertex from this embedding. (Recall that Whitney disks are oriented as in Definition \ref{def: W3}.)

The relevant obstruction group in our context will be denoted $\mathcal{T}_n$.  It is defined as a quotient of the free abelian group generated by trivalent trees with $n+2$ leaves (and thus $n$ trivalent vertices). The leaves are labeled by non-repeating labels $\{ 1,\ldots, n+2\}$, and the trivalent vertices are cyclically oriented. The quotient is taken with respect to the AS and IHX relations, shown on the right in Figure \ref{fig:WhitneyTower}. These relations are well-known in the study of finite type invariants; in the context of Whitney towers the AS (anti-symmetry) relation corresponds to switching orientations of Whitney disks, and the IHX relation reflects choices of Whitney arcs, see \cite{CST}.

Following \cite[Section 2.1]{ST}, the {\em intersection tree} ${\tau}_n$ of an order $n$ Whitney tower $W$ is defined to be 
\begin{equation} \label{def: intersection inv}
{\tau}_n(W):= \sum_p {\epsilon}(p)\,  t_p\; \in {\mathcal T}_n,
\end{equation}
where the sum is taken over all unpaired (order $n$) intersections points $p$, and ${\epsilon}(p)$ is the sign of the intersection. For example, for order $1$ Whitney tower the intersection trees are the $Y$ tree with two possible cyclic orderings of the trivalent vertex; the obstruction group ${\mathcal T}_1$ is isomorphic to $\Z$, and the intersection invariant matches the formula (\ref{eq: 02 def}).

Let $\confs{K}{n}$ denote $K^{\times n}$ minus the simplicial diagonal consisting of all products of simplices ${\sigma}_1\times\ldots\times {\sigma}_n$, where at least two of the simplices ${\sigma}_i$, ${\sigma}_j$ have a vertex in common for some $i\neq j$. 
The symmetric group ${\Sigma}_n$ acts in a natural way on the configuration space $\confs{K}{n}$ and also on ${\mathcal T}_{n-2}$. The following definition extends Definition \ref{def: W3} to all $n\geq 3$.

\begin{definition}[\sl The obstruction $\gobsn{K}$] \label{def: Wn} Let $n\geq 3$ and suppose a map $f\co K\longrightarrow \R^4$ admits a Whitney tower $W$ of order $n-2$. Endow the $2$-cells of $K$ with arbitrary orientations; orientations of all Whitney disks in $W$ are then determined as in Definition \ref{def: W3}. 
Consider the $\Sigma_n$-equivariant $2n$-cochain:
\begin{equation} \label{eq: on cocohain} 
w_n\co C_{2n}(\confs{K}{n})\longrightarrow {\mathcal T}_{n-2},
\end{equation}
whose value on the $2n$-cell $\s_1\times\ldots \times\s_n$ is given by the intersection invariant (\ref{def: intersection inv}) of the Whitney tower on the $2$-cells
$f(\s_1), \ldots, f(\s_n)$.
It is a cocycle since it is a top-dimensional cochain on $\confs{K}{2n}$. The resulting cohomology class is denoted $$\gobsn{K, W}\in H^{2n}_{\Sigma_n}(\confs{K}{n}; {\mathcal T}_{n-2}).$$
\end{definition}

Thus $\gobsn{K, W}$ is an obstruction to increasing the order of a given Whitney tower $W$ to $n-1$; in particular it is an obstruction to using the data of the Whitney tower $W$ to find an  embedding of $K$.

\begin{remark} \label{rem: coefficients}
Note that ${\mathcal T}_{n-2}$ is isomorphic to $\Z^{(n-2)!}$, cf. \cite[Lemma 19]{ST1}; compare this with the coefficients of the cohomology group in Theorem \ref{thm: cohom obs}. 
\end{remark}

We note that there is an analogue of stabilization in Definition \ref{def:stabilization} for higher trees generating ${\mathcal T}_n$, and an analogue of Corollary \ref{cor: height2} for higher obstructions ${\mathcal W}_n$. Thus there is an obstruction theory for $2$-complexes in $\R^4$ formulated entirely within the context of intersections of Whitney towers. As we mentioned previously, this paper is centered around the first new obstruction, ${\mathcal W}_3$; we plan to study higher obstructions in more detail in a future work. $\obs{K}$ and $\gobs{K}$ are related in the next section; a conjectural relation between $\obsn{K}$ and $\gobsn{K}$ for $n>3$ is stated in Section \ref{sec: Questions}.

\section{Comparing the cohomological and geometric obstructions} \label{sec:equality}
Here we will relate the obstruction $\obs{K}$ defined in Section \ref{sec: Constructing the obstruction} and $\gobs{K}$ from Section \ref{sec:Whitney towers}; the main result of this section is Theorem \ref{thm: obstructions coincide}. 
Before we state the result, a brief digression is needed to compare the settings of the two obstructions.\footnote{The second author would like to thank Pedro Boavida de Brito for motivating questions.}  As discussed in Section \ref{sec:van Kampen}, the two versions of the van Kampen obstruction are related by $\svk{K}=i^*\vk{K}$, where $i$ is the inclusion map $\confs{K}{2}\subset \conf{K}{2}$. 
The assumption in the theorem below is that $\vk{K}$ is trivial; it follows that $\svk{K}$ vanishes as well, and therefore there exists a map $f\co K\longrightarrow {\mathbb R}^4$ and a collection of Whitney disks for intersections of non-adjacent simplices.
Then Lemma \ref{Whitney-config} gives a $\Sigma_2$-equivariant map $F_{f,W}\co \confs{K}{2}\longrightarrow \conf{R^4}{2}$. However the starting point for the obstruction $\obs{K}$ is a $\Sigma_2$-equivariant map $\conf{K}{2}\longrightarrow \conf{R^4}{2}$. To relate the two contexts, for a given  simplicial $2$-complex we will take a subdivision fine enough to ensure that the inclusion $\confs{K}{2}\hookrightarrow\conf{K}{2}$ is a homotopy equivalence. Then $F_{f,W}$ induces a map (well defined up to equivariant homotopy) $\conf{K}{2}\longrightarrow \conf{R^4}{2}$, which is needed to define $\obs{K}$.\footnote{There are also other ways of relating the two settings; for example one may define a ``simplicial'' version of $\obs{K}$ as the homotopy-lifting obstruction in (\ref{Delta3prime}) where $\confs{K}{3}\longrightarrow \confs{K}{2}^{\times 3}$ is used instead.}

Without loss of generality we will assume that the Whitney disks are split as discussed in Section \ref{Whitney subsection}. 

\begin{theorem} \label{thm: obstructions coincide}
Given a $2$-complex $K$ with trivial van Kampen's obstruction $\vk{K}$, let $W$ be a collection of split Whitney disks for double points of a map $f\co K\longrightarrow {\mathbb R}^4$.  Let $F_{f,W}\co \confs{K}{2}\longrightarrow \conf{R^4}{2}$ be the $\Sigma_2$-equivariant map
determined by $f, W$ in Lemma \ref{Whitney-config}.
Then 
\begin{equation} \label{eq: equal}
\gobs{K,f,W}=i^* \obs{K} \in H^{6}_{\Sigma_3}(\confs{K}{3}; \Z[(-1)]),
\end{equation}
where $i\co \confs{K}{3}\longrightarrow \conf{K}{3}$ is the inclusion map.
\end{theorem}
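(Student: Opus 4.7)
My approach is to compute both sides cellularly on the $CW$-structure that $\confs{K}{3}$ inherits from the simplicial structure on $K$, and show that $i^*\obs{K}$ is represented by the same cellular cocycle as $\gobs{K,f,W}$. The geometric side is already a cellular cocycle by Definition \ref{def: W3}. For the homotopy-theoretic side I use the skeletal (Steenrod--Bredon) formulation of equivariant obstruction theory applied to the lifting problem
\[
\begin{tikzcd}[column sep=4em]
 & \conf{\R^4}{3}\arrow{d}{p_{\R^4}} \\
\confs{K}{3}\arrow[ru, dashed, shift left=2]\arrow[r,"(F_{f,W})^3\circ p_K"] & \conf{\R^4}{2}^{\times 3},
\end{tikzcd}
\]
which represents $i^*\obs{K}$ after passing to a sufficiently fine subdivision (so that the inclusion $\confs{K}{3}\hookrightarrow \conf{K}{3}$ induces the comparison map in cohomology identified in Section \ref{sec:van Kampen}). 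Since the homotopy fiber $F_4$ of $p_{\R^4}$ is $4$-connected with $\pi_5(F_4)\cong \Z[(-1)]$ as a $\Sigma_3$-module by Corollary \ref{corollary: homology}, the principal obstruction lives exactly in $H^{6}_{\Sigma_3}(\confs{K}{3};\Z[(-1)])$.

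First I extend $(F_{f,W})^3\circ p_K$ to a $\Sigma_3$-equivariant lift on the $5$-skeleton of $\confs{K}{3}$. On cells $\s_1\times\s_2\times\s_3$ in which at least one factor has dimension $\le 1$, generic disjointness of the image under $f$ gives a canonical lift, using only Whitney-move data on two coordinates exactly as in Lemma \ref{Whitney-config}. On the boundary of a top-dimensional cell $\s_i\times\s_j\times\s_k$ (all three factors $2$-cells, pairwise non-adjacent), the $5$-sphere $\partial(\s_i\times\s_j\times\s_k)$ is covered by faces of lower dimension on which a lift is already constructed; the proof that these fit together coherently runs in parallel with the proof of Lemma \ref{Whitney-config}, using the splitting of the Whitney disks to guarantee that the Whitney moves performed on pairs of coordinates have pairwise disjoint support. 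The obstruction to extending this lift across $\s_i\times\s_j\times\s_k$ is an element of $\pi_5(F_4)\cong\Z$, and this is the local value of $i^*\obs{K}$ on the cell.

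To compute this $\pi_5$ value I pass to the framed-cobordism lift $\obsfr{K}$ of Section \ref{subsection: cobordism} (whose Hurewicz image is $\obs{K}$), and apply the Pontryagin--Thom construction to the classifying map of Section \ref{sec: construction}. For the local model one reduces to a neighborhood of a single intersection of a split Whitney disk, say $W_{ij}\cap f(\s_k)$, in the Borromean configuration of Figure \ref{fig:Bor}: here the three sheets $\s_i,\s_j,\s_k$ intersect a $4$-ball in a standard Borromean link, and the restriction of the lift on $\partial(\s_i\times\s_j\times\s_k)$ to the corresponding sub-$5$-sphere is the Whitehead product generator of $\pi_5(F_4)$. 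The sign of its contribution to the obstruction cocycle is the sign of the intersection point $W_{ij}\cdot f(\s_k)$, with the orientation convention of Definition \ref{def: W3}. Summing over all intersection points of all split Whitney disks gives
\[
(i^*\obs{K})(\s_i\times\s_j\times\s_k)\;=\;W_{ij}\cdot f(\s_k)+W_{jk}\cdot f(\s_i)+W_{ki}\cdot f(\s_j),
\]
which is exactly $w_3(\s_i\times\s_j\times\s_k)$ from (\ref{eq: 02 def}).

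The main obstacle is the local identification in the third paragraph: matching the Whitehead-product generator of $\pi_5(F_4)$ arising from the Borromean boundary configuration with the Pontryagin--Thom class of a single transverse $W\cdot f(\s)$ intersection, including its sign. I expect to handle this by factoring the $\Sigma_3$-equivariant classifying map $\conf{\R^4}{2}^{\times 3}\to \widehat\Omega^2\Omega^\infty\Sigma^\infty\widehat S^{8}$ of Proposition \ref{prop: cobordism classifying} through the configuration space of three points in a $4$-ball and then computing on the universal local model; an equivariant version of the classical identification of the Hopf invariant of the Whitehead product with $\pm 1$ (together with Corollary \ref{corollary: homology}) delivers the desired sign. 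Once the local model is verified, the general case follows by the splitting of Whitney disks, $\Sigma_3$-equivariance, and additivity of Pontryagin--Thom classes under disjoint union of framed $0$-manifolds, completing the proof of (\ref{eq: equal}).
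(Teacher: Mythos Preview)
Your overall strategy matches the paper's: compute $i^*\obs{K}$ via skeletal obstruction theory on $\confs{K}{3}$, build an explicit $\Sigma_3$-equivariant lift on the $5$-skeleton, and identify the local obstruction on each top cell with the Whitehead-product generator of $\pi_5(F_4)$ at a Borromean model.

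The substantive difference is in how the local identification is carried out. You propose to route through the framed-cobordism lift $\obsfr{K}$ and the classifying map of Proposition~\ref{prop: cobordism classifying}, factoring through a universal model in a $4$-ball. The paper does \emph{not} use Section~\ref{sec: construction} here at all. Instead, after a subdivision that puts each $2$-cell into one of three explicit types (disjoint from all others; meeting exactly one other cell in a canceling pair; or meeting exactly one Whitney disk once), it builds the $5$-skeleton lift $F$ by hand via capped surfaces and symmetric surgery (Construction~\ref{5-skeleton}). It then trivializes the Fadell--Neuwirth fibration $p_{12}\colon\conf{\R^4}{3}\to\conf{\R^4}{2}$ over the image of $F|_{\partial D^6}$ (which lands in a ball in $S^3_{12}$), so that $F$ lifts to the fiber $S^3_{13}\vee S^3_{23}$. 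The Whitehead-product identification is done by the classical Pontryagin construction on this unstable map: the class in $\pi_5(S^3\vee S^3)$ is read off as the linking number in $S^5=\partial(\s_1\times\s_2\times\s_3)$ of the transverse point-preimages of $\bar p_{13}\circ F$ and $\bar p_{23}\circ F$, and these are computed explicitly (Figures~\ref{Whitney4}--\ref{Whitney5}) to be the two $2$-spheres $\{*\}\times\partial D^3_2$ and $\partial D^3_1\times\{*\}$ in $\partial(D^3_1\times D^3_2)$, with linking number $\pm 1$.

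Your proposal is candid that this local step is the ``main obstacle'' and only sketches a plan (an equivariant Hopf-invariant argument via the stable classifying map). As written, that is not yet a proof: you would have to actually execute the computation and match the sign to the orientation convention of Definition~\ref{def: W3}. The paper's route avoids the stable machinery entirely and stays with a concrete linking-number calculation in the Borromean picture. If you continue along your line, you will also want the paper's preliminary subdivision step, since that is what guarantees each $6$-cell sees at most one Whitney-disk intersection and makes the sum in your final display genuinely local rather than a heuristic decomposition.
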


{\em Proof.} For convenience of the reader, the proof of Theorem \ref{eq: equal} is divided into steps.

{\bf Step 1: subdivision}.
The pullback $i^* \obs{K,F_{f,W}}$ is  the obstruction to the existence of a $\Sigma_3$-equivariant dashed map 
making the following diagram commute up to homotopy.
\begin{equation} \label{eq: C K 3 2}
\begin{tikzcd}[column sep = large]
\confs{K}{3} \arrow[r, dashed] \arrow{d}{p_K} & \conf{ \R^4}{3} \arrow{d}{p_{\R^4}} \\
\confs{K}{2}^3 \arrow{r}{(F_{f,W})^3}& \conf{\R^4}{2}^3
\end{tikzcd}
\end{equation}
The first step of the proof is to use subdivision to reduce to a model situation where precisely one of the following holds for the image under $f$ of each $2$-cell $\s$ of $K$:

\begin{enumerate}
    \item{$\s$ is mapped in disjointly from all other non-adjacent $2$-cells,}
\item{$\s$ intersects exactly one other non-adjacent $2$-cell in two points, or}
\item{$\s$ has a single intersection point with one of the Whitney disks.}
\end{enumerate}

(Moreover, the Whitney disks are already assumed to be split, so each one intersects at most one $2$-cell as in Figure \ref{Whitney}.)
To begin with, each $2$-cell $\s$ of $K$ has a finite number of disjoint Whitney arcs, as shown in Figure \ref{fig:isotopy}, and a finite number of intersection points with Whitney disks. The conditions (1)-(3) above are achieved by subdividing so that each $2$-cell contains at most one Whitney arc or intersection point with a Whitney disk. For each pair on intersections of $2$-cells $\s_i, \s_j$ as in case (3) we will choose a particular ordering of $i,j$ that will determine which sheet is pushed by the Whitney move.

Let $K'$ denote the $2$-complex obtained as the result of the subdivision and let $f'\co K'\longrightarrow \R^4$ be the resulting map. The map $F_{f,W}$ in Lemma \ref{Whitney-config} was defined by local modifications of $f$ in disk neighborhoods of the Whitney arcs; $F_{f',W'}$ may be assumed to be defined with respect to the same disk neighborhoods (which are now located in distinct $2$-cells of $K'$). It follows that $F_{f,W}$ is the composition 
$$ \confs{K}{2}\longrightarrow \confs{K'}{2}\longrightarrow \conf{R^4}{2}
$$
of the inclusion and $F_{f',W'}$. Moreover, the cochain (\ref{eq: o3 cocohain}) defining $\gobs{K}$ is natural with respect to subdivisions, so $\gobs{K}$ is the pullback of $\gobs{K'}$ under the inclusion $\confs{K}{3}\longrightarrow \confs{K'}{3}$.
Thus it suffices to prove Theorem \ref{thm: obstructions coincide} for $K'$. For the rest of the proof we will revert to the notation $K$ for the $2$-complex, assuming it is subdivided to satisfy conditions (1)-(3).

{\bf Step 2: a lift on the ${\mathbf 5}$-skeleton}.
Since the homotopy fiber of the map $p_{\R^4}\co \conf{\R^4}{3}\longrightarrow \conf{\R^4}{2}^3$ is $4$-connected,
there is a lift in (\ref{eq: C K 3 2}) on the $5$-skeleton ${\rm Sk}^5\confs{K}{3}$.

\begin{construction} \label{5-skeleton}  The construction described below defines a particular $\Sigma_3$-equivariant map of the $5$-skeleton, $F\co {\rm Sk}^5\confs{K}{3}\longrightarrow \conf{\R^4}{3}$, lifting up to homotopy the $\Sigma_3$-equivariant map 
${\rm Sk}^5\confs{K}{3} \longrightarrow \conf{\R^4}{2}^3$. Its specific geometric form will be used for identifying the point preimages of the map to $S^3\vee S^3$ in diagram (\ref{fibration diag1}). The construction relies on the capped surface description of the Whitney move (Figure \ref{Whitney}), and is an extension of Lemma \ref{Whitney-config}.
\end{construction}

Consider the map on the $4$-skeleton induced by $f$: given any pairwise non adjacent $2$-cell $\s$ and $1$-cells $\nu$, $\tau$, by general position $f(\s), f(\nu)$ and $f(\tau)$ are pairwise disjoint; $F$ is defined on $\s\times\nu\times \tau$ (and its orbit under the $\Sigma_3$ action) by the Cartesian product $f^{\times 3}$. 

The main part of the construction concerns the extension of this map to the $5$-cells. 
We will define $F$ on the boundary of each $6$-cell $\partial(\s_1\times\s_2\times\s_3)$, where $\s_i$, $i=1,2,3$ are $2$-cells of $K$, so that the definition is consistent on the overlap of the boundaries of $6$-cells. The map will be defined for a particular ordering $\s_1,\s_2, \s_3$ and extended to triple products corresponding to other orderings using $\Sigma_3$ equivariance.

There are three cases:
\begin{enumerate}[label=(\roman*)]
    \item{ the images of $\s_i, i=1,2,3,$ are pairwise disjoint,}
    \item{two of them, say $\s_1, \s_3$ intersect, and $W_{13}\cap \s_2=\emptyset$,}
    \item{two of them, say $\s_1, \s_3$ intersect, and $W_{13}\cap \s_2$ is a point.}
\end{enumerate}

In case (i) the map $F$ is defined on $\partial(\s_1\times\s_2\times\s_3)$ as the Cartesian cube $f^{\times 3}$. Consider case (ii). The boundary of the product  $\partial ({\sigma}_1\times {\sigma}_2\times {\sigma}_3)$ naturally decomposes as the union of three parts. The definition of $F$ on two of the parts is again $f^{\times 3}$. 
The definition of $F$ on $\s_1\times \partial \s_2\times \s_3$  is an analogue of the proof of Lemma \ref{Whitney-config}. It is defined on $D_1\times\partial \s_2\times \s_3$
as $f\times f\times  \widetilde f$, where $\widetilde f$ is the result of the Whitney move on $\s_3$, and $D_1$ is a disk neighborhood of the Whitney arc in $\s_1$. As in the proof of that lemma an isotopy in a collar $C$ on the boundary of $D_1$ is used, so that on $\partial \s_1\times \partial \s_2\times \s_3$ the map $F$ equals $f^{\times 3}$.

Now consider the most interesting case (iii), shown in Figure \ref{Whitney}. As in the previous case consider a smaller disk neighborhood $D_1$ of the Whitney arc in $\s_1$.
We will work in the $4$-ball neighborhood of the Whitney disk $W_{13}$; the intersection of ${\sigma}_i, i=1,2,3$ with $\partial D^4$ forms the Borromean rings, illustrated in Figure \ref{fig:Bor}.  
The disk $\s_3$ may be converted into a punctured torus as in Figure \ref{Whitney}.

It will be convenient to represent disks in $D^4$ as movies in $D^3\times [-1,1]$ with time $-1\leq t\leq 1$, where most of the activity takes place at time $t=0$. The remaining figures in this section illustrate $D^3\times \{ 0\}$. Figure \ref{Whitney1} shows the capped torus (referred to above) bounded by $\partial \s_3$ in this representation. The punctured torus consists of two plumbed bands, with caps $C'$ (intersecting $\s_2$) and $C''$ (intersecting $D_1$). The intersections of $D_1$ and $\s_2$ with the slice $D^3\times \{0\}$ are arcs; they extend as (arc$\times I$) into the past and the future. 

\begin{figure}[ht]
\hspace*{-.7cm}\includegraphics[height=3.2cm]{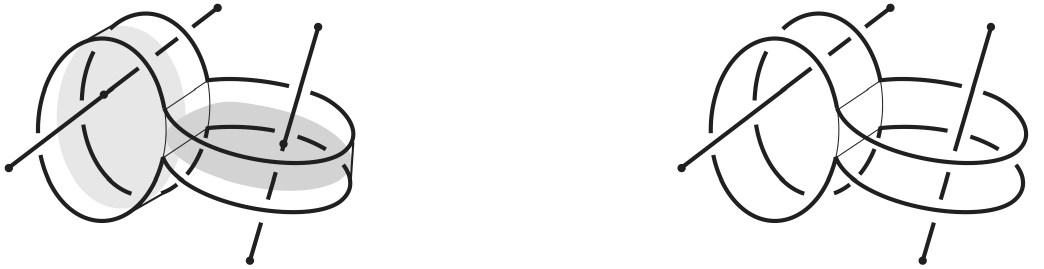}
{    
    \put(-18,88){${\sigma}_2 $}
       \put(-140,35){$D_1 $}
       \put(-104,6){$\partial {\sigma}_3$}
       }
       {\scriptsize
       \put(-315,40){$C''$}
       \put(-252,25){$C'$}
    }  
    \caption{Left: the capped torus bounded by $\partial \s_3$ with caps $C', C''$.  Right: the map $f$ defining $F_{12}$.}
\label{Whitney1}
\end{figure}

The disks bounded by $\s_3$ in Figures \ref{Whitney2}, \ref{Whitney3} are the surgeries along the two caps and the symmetric surgery, and they will be entirely in the present. The original map $f$ is recovered by the surgery along the cap $C''$ (Figure \ref{Whitney2}, left), and the result of the Whitney move $\widetilde f$ is the surgery on $C'$ (Figure \ref{Whitney2}, right).

We will now proceed to define $F$ on the three parts of the boundary $\partial ({D}_1\times {\sigma}_2\times {\sigma}_3)$.
The map $F_{12}\co{D}_1\times {\sigma}_2\times \partial {\sigma}_3 \longrightarrow \conf{\R^4}{3}$ is defined as the Cartesian product $f^{\times 3}$ where $f$ is the original map $K\longrightarrow \R^4$; it is an embedding when restricted to $ {D}_1\coprod {\sigma}_2\coprod \partial {\sigma}_3\hookrightarrow {\mathbb R}^4$, Figure \ref{Whitney1} (right).

The maps $F_{23}\co \partial {D}_1\times {\sigma}_2\times {\sigma}_3 \longrightarrow \conf{\R^4}{3}$, $F_{13}\co {D}_1\times \partial{\sigma}_2\times  {\sigma}_3 \longrightarrow \conf{\R^4}{3}$ are defined respectively as $f^{\times 3}$, $(\widetilde f)^{\times 3}=f\times f\times \widetilde f$ where $f$ is again the original map which restricts to an embedding $f\co \partial{D}_1\coprod {\sigma}_2\coprod {\sigma}_3\hookrightarrow {\mathbb R}^4$, and $\widetilde f \co {D}_1\coprod \partial {\sigma}_2\coprod {\sigma}_3\hookrightarrow {\mathbb R}^4$ is the result of the Whitney move on $\s_3$, Figure \ref{Whitney2}.

\begin{figure}[ht]
\centering
\includegraphics[height=3.5cm]{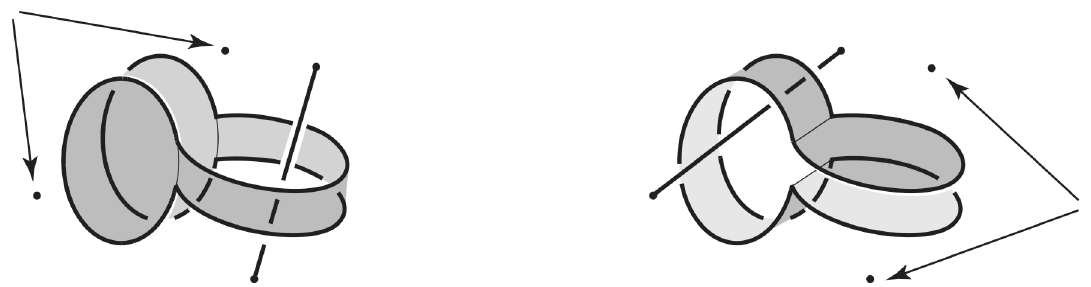}
{    
    \put(-1,25){${\partial {\sigma}_2} $}
       \put(-170,30){$D_1 $}
       \put(-119,8){$ {\sigma}_3$}
        \put(-268,82){${ {\sigma}_2} $}
       \put(-395,97){$\partial D_1 $}
       \put(-325,12){$ {\sigma}_3$}
    }  
    \caption{The map $f$ defining $F_{23}$ (left) and $\widetilde f$ defining $F_{13}$ (right).}
\label{Whitney2}
\end{figure}

The only part of the definition where the map differs from $f^{\times 3}$ is ${D}_1\times \partial{\sigma}_2\times  {\sigma}_3$, where $F$ is defined as $(\widetilde f)^{\times 3}=f\times f\times \widetilde f$.
As in case (ii) and in the proof of Lemma \ref{Whitney-config}, consider a collar $C$ on $\partial D_1$ in $\s_1$ and extend  $F$ to ${C}\times \partial{\sigma}_2\times  {\sigma}_3$ using an isotopy from $\widetilde f$ to $f$. The half point of the isotopy, the symmetric surgery discussed above, is shown in Figure \ref{Whitney3}. Finally, the map is set to be $f^{\times 3}$ on $(\s_1\smallsetminus({C}\cup D_1))\times \partial{\sigma}_2\times  {\sigma}_3$.

\begin{figure}[ht]
\centering
\includegraphics[height=3.5cm]{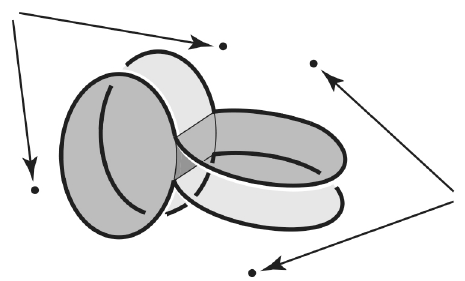}
{
    \put(-2,25){$\partial {\sigma}_2 $}
       \put(-178,98){$\partial{\sigma}_1 $}
       \put(-112,10){${\sigma}_3$}
    }  
    \caption{The symmetric surgery on the capped torus.}
\label{Whitney3}
\end{figure}

The map $F$ is well-defined on the $5$-skeleton: consider an overlap $\partial \s_2\cap \partial \s_2'$, where $\s_2$ intersects $W_{13}$ as in case (iii) and $\s_2'$ is disjoint from $W_{13}$, as in case (ii). The definition in the two cases above assigns the same map to $\s_1\times (\partial \s_2\cap \partial \s_2')\times \partial \s_3$.

The constructed map $F\co {\rm Sk}^5\confs{K}{3}\longrightarrow \conf{\R^4}{3}$ lifts
${\rm Sk}^5\confs{K}{3} \longrightarrow \conf{\R^4}{2}^3$ up to homotopy because the surgeries on the two caps, defining $F$, are isotopic.
This concludes the description of the map $F$ in Construction \ref{5-skeleton}.

{\bf Step 3: comparing obstructions on the cochain level}. 
In the remainder of the proof of Theorem \ref{thm: obstructions coincide} we will show that the cohomology classes $\gobs{K}$, $i^*\obs{K}$ coincide on the cochain level. 
The value of the cocycle $w_3$ in (\ref{eq: 02 def}) is zero on the $6$-cell $D^6:=\s_1\times \s_2\times \s_3$ in cases (i), (ii) above, and it equals $\pm 1$ in case (iii).
Recall that $i^*\obs{K}$ is the $6$-dimensional cohomological obstruction to lifting the map $\confs{K}{3}\to \conf{\R^4}{2}^3$
in the diagram (\ref{eq: C K 3 2}) to a map $\confs{K}{3}\to \conf{\R^4}{3}$. We now recall the skeletal construction of the obstruction, which we mentioned in Section~\ref{sec: Constructing the obstruction}. According to the skeletal approach, a choice of a lift $F$ defined on the $5$-dimensional skeleton of $\confs{K}{3}$ determines a cochain representative of $i^*\obs{K}$. A change of choice alters the cochain by a coboundary.  The value of the obstruction cochain on the $6$-cell $D^6$ is the element represented by $F(\partial D^6)$ in $\pi_5$ of the homotopy fiber of the map $p_{\R^4}\co \conf{\R^4}{3}\longrightarrow \conf{\R^4}{2}^3$; we will focus on the non-trivial case (iii) to match it with the value of $w_3$.

As we did in Section~\ref{sec: Constructing the obstruction}, let us denote the fiber of the map $p_{\R_4}\colon\conf{\R^4}{3}\to \conf{R^4}{2}^3$ by $F_4$. We saw earlier that $F_4$ is $4$-connected, and $\pi_5(F_4)\cong\Z$ (see Corollary~\ref{corollary: homology} and the remark immediately following it). Now we need to identify the generator of $\pi_5(F_4)$ as a Whitehead product.

Choose two points $x_1, x_2\in \R^4$. This choice determines an embedding, which is also a homotopy equivalence $$S^3\vee S^3\stackrel{\simeq}{\hookrightarrow} \R^4\setminus\{x_1, x_2\}$$ as a wedge sum of two round spheres whose centers are $x_1$ and $x_2$. Furthermore, we have an embedding $$\R^4\setminus \{x_1, x_2\} \hookrightarrow \conf{\R^4}{3},$$ which sends $x$ to $(x_1, x_2, x)$. 

Let $\alpha, \beta$ be the two standard generators of $\pi_3(S^3\vee S^3)$. We also denote the images of $\alpha$ and $\beta$ in $\pi_3(\conf{\R^4}{3})$ by the same letters.

\begin{lemma}
The Whitehead product $[\alpha, \beta]$ is mapped to zero by $p_{\R^4}$. It lifts to a generator of $\pi_5(F_4).$
\end{lemma}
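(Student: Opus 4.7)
The plan is to address the two assertions in turn. For the first, that $p_{\R^4*}[\alpha,\beta]=0$, I would invoke naturality of the Whitehead product: $p_{\R^4*}[\alpha,\beta]=[p_{\R^4*}\alpha,\,p_{\R^4*}\beta]$, and then compute each term by writing the composition $\R^4\setminus\{x_1,x_2\}\hookrightarrow\conf{\R^4}{3}\xrightarrow{p_{\R^4}}\conf{\R^4}{2}^{\times 3}$ component-wise as $x\mapsto\bigl((x_1,x_2),\,(x_2,x),\,(x,x_1)\bigr)$. On the sphere $\alpha$ around $x_1$ the first coordinate is constant, the second is null-homotopic (contract $x$ to $x_1$ while keeping $x_2$ fixed), and the third realises a generator of $\pi_3(\conf{\R^4}{2})\cong\Z$; symmetrically, $\beta$ is supported in the second factor. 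Thus $p_{\R^4*}\alpha$ and $p_{\R^4*}\beta$ are pushed forward from distinct factors of the product $\conf{\R^4}{2}^{\times 3}$, and Whitehead products of classes coming from different factors of a product vanish (because the wedge-to-product factorization extends canonically over the smash).

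For the second assertion, I would compare with a universal model. Step 1 shows that the restriction of $p_{\R^4}$ to $S^3\vee S^3\simeq \R^4\setminus\{x_1,x_2\}$ factors through some $j\colon S^3\times S^3\to\conf{\R^4}{2}^{\times 3}$, namely $j(y,x)=\bigl((x_1,x_2),y,x\bigr)$. This furnishes a commutative ladder of homotopy fibration sequences
\begin{equation*}
\begin{tikzcd}[column sep=small]
\operatorname{fib}(\iota)\ar[r]\ar[d] & S^3\vee S^3\ar[r,"\iota"]\ar[d] & S^3\times S^3\ar[d,"j"]\\
F_4\ar[r] & \conf{\R^4}{3}\ar[r,"p_{\R^4}"] & \conf{\R^4}{2}^{\times 3},
\end{tikzcd}
\end{equation*}
where $\iota$ is the canonical inclusion. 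It is classical that $\operatorname{fib}(\iota)$ is $4$-connected with $\pi_5\cong\Z$ generated by $[\alpha,\beta]$: using the Serre spectral sequence one sees $H^{<5}(\operatorname{fib}(\iota))=0$ and $H^5(\operatorname{fib}(\iota))\cong\Z$; from the long exact sequence the image of $\pi_5(\operatorname{fib}(\iota))$ in $\pi_5(S^3\vee S^3)$ equals $\ker\bigl(\pi_5(S^3\vee S^3)\to\pi_5(S^3\times S^3)\bigr)=\Z\langle[\alpha,\beta]\rangle$, and since the source is $\Z$ this map is an isomorphism onto that subgroup. Since the lift $\widetilde{[\alpha,\beta]}\in\pi_5(F_4)$ is the image of the canonical lift of $[\alpha,\beta]$ in $\pi_5(\operatorname{fib}(\iota))$ under the induced map on fibres, it suffices to show $\operatorname{fib}(\iota)\to F_4$ is an isomorphism on $\pi_5$.

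By Hurewicz (both spaces being $4$-connected), this reduces to showing the map is an isomorphism on $H^5$. Here I would use naturality of the transgression. Since $H^5(\conf{\R^4}{3})=0=H^5(S^3\vee S^3)$, both transgressions $d_6$ from $H^5$ of the fibre are injective; the bottom one lands in $H^6(\conf{\R^4}{2}^{\times 3})$ with image $\Z\langle\text{Arnold class}\rangle$ (Lemma \ref{lemma: arnold relation}), and the top one lands in $H^6(S^3\times S^3)\cong\Z$ with image the whole group (generated by the top class). Naturality then reduces the claim to the calculation that $j^*$ sends the Arnold class to a generator of $H^6(S^3\times S^3)$. This is direct: in the Arnold class $u\otimes u\otimes 1+(-1)^{d-1}u\otimes 1\otimes u+1\otimes u\otimes u$, the first two summands pull back to zero under $j$ because they contain a factor of $u$ in the first (constant) coordinate, while the last summand pulls back to $y^*u\smile x^*u$, which is the top class.

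The principal obstacle is the careful bookkeeping in Step 1 — verifying that $p_{\R^4*}\alpha$ and $p_{\R^4*}\beta$ really are supported in distinct factors, so that the factorisation $j$ exists — together with matching signs in the Arnold class computation. Once the comparison ladder is in place, both assertions follow formally from naturality of the Whitehead product and of the Serre spectral sequence.
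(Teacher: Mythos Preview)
Your argument is correct, but it takes a noticeably longer route than the paper's. The paper observes that $F_4$ is the \emph{total} homotopy fibre of the square
\[
\begin{tikzcd}
\conf{\R^4}{3}\ar[r,"p_{13}\times p_{23}"]\ar[d,"p_{12}"'] & \conf{\R^4}{2}\times\conf{\R^4}{2}\ar[d]\\
\conf{\R^4}{2}\ar[r] & *
\end{tikzcd}
\]
and hence is the homotopy fibre of the induced map on vertical fibres, namely $\R^4\setminus\{x_1,x_2\}\to\conf{\R^4}{2}^2$. Under the standard equivalences this \emph{is} the inclusion $S^3\vee S^3\hookrightarrow S^3\times S^3$, so $F_4\simeq\operatorname{fib}(\iota)$ on the nose, and the classical fact (cited from Whitehead) finishes both assertions simultaneously. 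In effect the paper recognises that your comparison map $\operatorname{fib}(\iota)\to F_4$ is already an equivalence, without needing to check anything on $\pi_5$.

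Your approach, by contrast, treats the two assertions separately: the vanishing of $p_{\R^4*}[\alpha,\beta]$ by naturality of Whitehead products, and the generation of $\pi_5(F_4)$ by constructing the ladder and then verifying the induced map is an isomorphism on $\pi_5$ via Hurewicz, transgression, and an explicit Arnold-class computation. This is more work, but it has the virtue of making the connection to the Arnold class completely explicit (a theme that recurs in Lemma~\ref{lem: o3 Arnold} and Section~\ref{sec:Examples}). The paper's iterated-fibre trick is the cleaner and more conceptual argument; yours is more computational but ties directly into the cohomological picture developed elsewhere in the paper.
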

\begin{proof}
{
Consider the following diagram:
\[
\begin{tikzcd}
& S^3 \vee S^3 \arrow[d, "\simeq"] & \\
F_4\arrow[r] &\R^4\setminus \{x_1, x_2\} \arrow[r]\arrow[d] & S^3 \times S^3 \arrow[d] \\
& \conf{\R^4}{3} \arrow[d, "p_{12}"]\arrow[r, "p_{13}\times p_{23}"] & \conf{\R^4}{2}\times \conf{\R^4}{2}\arrow[d] \\ 
& \conf{\R^4}{2} \arrow[r] & * 
\end{tikzcd}
\]
In this diagram $p_{ij}$ is the map that sends $(x_1, x_2, x_3)$ to $(x_i, x_j)$. The space $F_4$ is the total homotopy fiber of the bottom square. It is naturally equivalent to the homotopy fiber of the top right horizontal map, which is the map between vertical fibers of the bottom square. The top right horizontal map is in turn naturally equivalent to the inclusion $S^3\vee S^3 \hookrightarrow S^3\times S^3$, so we have an identification of $F_4$ with the homotopy fiber of this inclusion. By classical homotopy theory, the first non-trivial homotopy group of the homotopy fiber of this inclusion is $\pi_5$, it is isomorphic to $\Z$, and it is generated by the Whitehead product $[\alpha, \beta]$ (see~\cite[Theorem XI.1.7]{Whitehead}).
}
\end{proof}
Now we continue the analysis of $F(\partial D^6)$. The value of $F(\partial D^6)$ will be determined as follows.
As above, consider the fibration $p_{12}\co \conf{\R^4}{3}\longrightarrow \conf{\R^4}{2}$, $p_{12}(x_1, x_2, x_3)=(x_1, x_2)$:
\begin{equation} \label{fibration diag1}
\centering
\begin{tikzcd}
& {\mathbb R}^4\smallsetminus 2\, {\rm points} \arrow[r, "\simeq"] \arrow[d] & S_{13}^3\vee S_{23}^3 \\
\partial ({\sigma}_1\times {\sigma}_2\times {\sigma}_3) \arrow[r, "F"] & \conf{\R^4}{3}\arrow[d, "p_{12}"]  \arrow[dr, bend right=10, "\bar p_{12}"] & \\
& \conf{\R^4}{2}\arrow[r,"\simeq"] & S_{12}^3
\end{tikzcd}
\end{equation}

The composition $p_{12}\,\circ F$  is null-homotopic, where the map $F\co \partial ({\sigma}_1\times {\sigma}_2\times {\sigma}_3) \longrightarrow \conf{\R^4}{3}$ is the result of Construction \ref{5-skeleton}. In fact, it is clear from Figure \ref{Whitney1} that $\bar p_{12}\circ F$ is not surjective: its image is contained in a ball $D^3\subset S^3$. Trivializing the fibration over $D^3$, the map $F$ lifts to the fiber, yielding a map $\widetilde F\co S^5=\partial ({\sigma}_1\times {\sigma}_2\times {\sigma}_3)\longrightarrow S_{13}^3\vee S_{23}^3$. The remainder of the proof of Theorem \ref{thm: obstructions coincide} amounts to checking that the homotopy class of this map in ${\pi}_5(S_{13}^3\vee S_{23}^3)$ represents the Whitehead product of the two wedge summands. 

{\bf Step 4: identifying the homotopy class as the Whitehead product}.
The compositions of the map $\widetilde F$ with the projections of $S_{13}^3\vee S_{23}^3$ onto the wedge summands are homotopic to $\bar p_{13}\circ F$, $\bar p_{23}\circ F$ in the diagram (\ref{fibration diag2}). In both diagrams, the map $\bar p_{ij}\co \conf{\R^4}{3}\longrightarrow S^3_{ij}$ is given by $\bar p_{ij}(x_1, x_2, x_3)=(x_i, x_j)/|x_i-x_j|$, $i\neq j\in\{1,2,3\}$.

\begin{equation} \label{fibration diag2}
\centering
\begin{tikzcd}
& & & S_{13}^3\\
\partial ({\sigma}_1\times {\sigma}_2\times {\sigma}_3) \arrow[r, "F"] & \conf{\R^4}{3}\arrow[d, "\bar p_{12}"']  \arrow[urr, "\bar p_{13}"] \arrow[drr, "\bar p_{23}"']  &  & \\
& S_{12}^3 & & S_{23}^3
\end{tikzcd}
\end{equation}

Using the Pontryagin construction, the homotopy class of $\widetilde F$ in  ${\pi}_5(S_{13}^3\vee S_{23}^3)$ can be determined by the linking number of point preimages of $\bar p_{13}\circ F$, $\bar p_{23}\circ F$. 

\begin{figure}[ht]
\centering
\includegraphics[height=3.5cm]{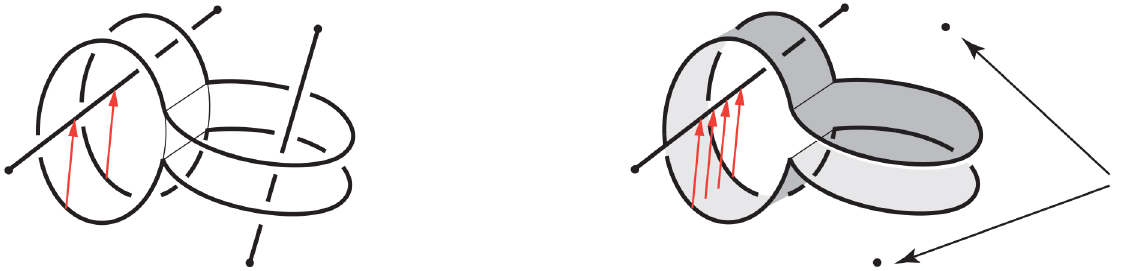}
{    
    \put(-2,28){${\partial {\sigma}_2} $}
       \put(-196,37){${\sigma}_1 $}
       \put(-138,12){$ {\sigma}_3$}
        \put(-292,90){${ {\sigma}_2} $}
       \put(-426,40){${\sigma}_1 $}
       \put(-366,10){$ \partial{\sigma}_3$}
    }  
    \caption{}
\label{Whitney4}
\end{figure}

A transverse point preimage of $\bar p_{13}\circ F$ is shown in Figure \ref{Whitney4}, where a point in $S^3_{13}$ is represented as a vector $v$ in ${\mathbb R}^4$ (colored red online). The preimage of $\bar p_{13}\circ F_{23}$ (defined on the left in Figure \ref{Whitney2}) is empty. The preimage of $\bar p_{13}\circ F_{12}$ is shown on the left of Figure \ref{Whitney4} and consists of two disks $\{ a_0\}\times\s_2\times \{ b_0\}$, $\{ a_1\}\times\s_2\times \{ b_1\}$. Here $a_0, a_1\in \s_1$ and $b_0, b_1\in\partial \s_3$ are the endpoints of the two vectors parallel to $v$ shown in the figure. The preimage of $\bar p_{13}\circ F_{13}$ is shown on the right of Figure \ref{Whitney4} and consists of the annulus $\{ a_t\}\times\partial\s_2\times \{ b_t\}$, $0\leq t\leq 1$. The entire point preimage of $\bar p_{13}\circ F$ is a $2$-sphere assembled of these two disks and the annulus.

Similarly, the point preimage of $\bar p_{23}\circ F$ is analyzed in Figure \ref{Whitney5}. It consists of a $2$-sphere which is the union of two disks $\s_1\times \{ c_0\} \times \{ d_0\}$, $\s_1\times \{ c_1\} \times \{ d_1\}$ and the annulus $\partial \s_1\times \{ c_t\} \times \{ d_t\}$, $0\leq t\leq 1$. Here $c_t\in \s_2$ and $d_t\in\s_3$ for all $t\in [0,1]$, with $d_0, d_1\in \partial \s_3$.

Consider $\s_3$ as a product of two intervals $I_1\times I_2$, and let $D^3_i=\s_i\times I_i$, $i=1, 2$. Note that the two $0$-spheres $(b_0, b_1)$ and $(d_0, d_1)$ link in $\partial \s_3$. Reparametrize $\s_1\times \s_2\times\s_3$ as $D^3_1\times D^3_2$. 
The point preimages of $\bar p_{13}\circ F$, $\bar p_{23}\circ F$ are seen to be the two $2$-spheres $ \{*\} \times\partial D^3_2$,  $\partial D^3_1\times \{*\} \subset \partial (D^3_1\times D^3_2)$.
This concludes the proof of Theorem \ref{thm: obstructions coincide}.
\qed

\begin{figure}[ht]
\centering
\includegraphics[height=4.5cm]{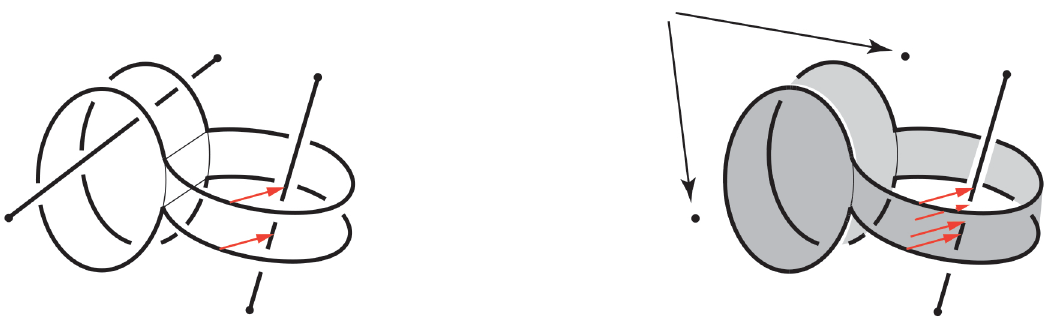}
{    
    \put(-10,95){${ {\sigma}_2} $}
       \put(-170,125){$\partial {\sigma}_1 $}
       \put(-105,10){$ {\sigma}_3$}
         \put(-285,95){${ {\sigma}_2} $}
      \put(-433,40){${\sigma}_1 $}
       \put(-370,10){$ \partial{\sigma}_3$}
    }  
    \caption{}
\label{Whitney5}
\end{figure}

\begin{remark}
Link-homotopy invariants using Whitehead products in configuration spaces were defined and studied in \cite{Koschorke}. The context of the above proof is similar, but the actual method and details of the proof are independent of the results of \cite{Koschorke}.
\end{remark}

\section{A triple collinearity interpretation}\label{sec: construction}
In this section we prove Proposition~\ref{prop: cobordism classifying}. That is, following the notational convention \ref{action convention}, we will construct a $\Sigma_3$-equivariant map $\conf{\R^d}{2}^3\to \widehat\Omega^2 \Omega^\infty \Sigma^\infty \widehat S^{2d}$ that makes the square~\eqref{eq: cobordism classifying square} $3d-5$-cartesian. In fact, we will do something slightly stronger. Namely, we will construct a $\Sigma_3$-equivariant map (recall that ${\widetilde S}^{d-1}$ is $\Sigma_2$-equivariantly equivalent to $\conf{\R^d}{2}$)
\[
f\colon ({\widetilde S}^{d-1})^3\to \widehat \Omega^2 \widehat S^{2d}
\]
such that the following composition is $\Sigma_3$-equivariantly null-homotopic
\[
\conf{\R^3}{3}\to ({\widetilde S}^{d-1})^3\xrightarrow{f} \widehat\Omega^2 \widehat S^{2d}
\]
and moreover the following square diagram is $3d-5$-cartesian. 
\begin{equation}\label{eq: classifying square}
\begin{tikzcd}
\conf{\R^d}{3} \arrow[r] \arrow{d} & ({\widetilde S}^{d-1})^3 \arrow{d}{f} \\
* \arrow{r}& \widehat \Omega^2 \widehat S^{2d}
\end{tikzcd}.
\end{equation}
We call a map $f$ with these properties a {\it classifying map}. Since there is a natural map $$\widehat\Omega^2 \widehat S^{2d} \to \widehat\Omega^2\Omega^\infty \Sigma^\infty \widehat S^{2d}$$ that is $4d-3$-connected, it follows that the square~\eqref{eq: classifying square} is $3d-5$-cartesian if and only if the square~\eqref{eq: cobordism classifying square} is $3d-5$-cartesian. 

The following lemma gives a practical way to verify that a given map is a classifying map. 
\begin{lemma}\label{lemma: cocartesian}
Suppose that we have a $\Sigma_3$-equivariant map
\[
f\colon ({\widetilde S}^{d-1})^3\to \widehat\Omega^2 \widehat S^{2d}
\]
satisfying the following conditions:
\begin{enumerate}
    \item The composite map 
    \begin{equation}\label{eq: classifying sequence}
\conf{\R^d}{3}\to ({\widetilde S}^{d-1})^3\to  \widehat\Omega^2 \widehat S^{2d}
\end{equation}
is equivariantly null-homotopic.
\item $f$ induces an epimorphism on $H_{2d-2}$ (or, equivalently, a monomorphism on $H^{2d-2}$).
\end{enumerate}
Then $f$ is a classifying map.
\end{lemma}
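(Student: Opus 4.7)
The plan is to factor $f$ through the homotopy cofiber $C$ of $p_{\R^d}\colon \conf{\R^d}{3}\to (\widetilde S^{d-1})^3$, and reduce the problem to the analogous square for $C$, which is already known to be $(3d-5)$-cartesian by the Blakers--Massey argument appearing in the proof of Lemma~\ref{lemma: cohomology classifying} (using that $p_{\R^d}$ is $(2d-3)$-connected by Corollary~\ref{corollary: homology} and that $\conf{\R^d}{3}$ is $(d-2)$-connected). First I would use condition (1) to produce a $\Sigma_3$-equivariant factorization $f\simeq c\circ \iota$, where $\iota\colon (\widetilde S^{d-1})^3\to C$ is the canonical map to the cofiber and $c\colon C\to \widehat\Omega^2 \widehat S^{2d}$ is the induced $\Sigma_3$-equivariant map. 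This exhibits the given square as the image of the square involving $C$ under a map of squares whose only nontrivial component is $c$ on the bottom-right corner.

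Next, the fiber sequence $\operatorname{hofib}(\iota)\to \operatorname{hofib}(f)\to \operatorname{hofib}(c)$ associated with the composition $f=c\circ \iota$ reduces the comparison to controlling the connectivity of $c$. Indeed, to pass from the $(3d-5)$-cartesianness of the square with $C$ to that of the square with $\widehat\Omega^2 \widehat S^{2d}$, it suffices to show that $c$ is $(3d-4)$-connected, since then $\operatorname{hofib}(c)$ is $(3d-5)$-connected and the induced map $\operatorname{hofib}(\iota)\to \operatorname{hofib}(f)$ is $(3d-5)$-connected, which combined with the known connectivity of $\conf{\R^d}{3}\to \operatorname{hofib}(\iota)$ yields the desired conclusion.

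To analyze $c$, I would first note that by Corollary~\ref{corollary: homology} both $C$ and $\widehat\Omega^2 \widehat S^{2d}$ are $(2d-3)$-connected with $H_{2d-2}\cong \Z[(-1)^{d-1}]$. Condition (2) forces $c$ to be an isomorphism on this group (a surjection $\Z\to\Z$ is automatically an iso), hence on $\pi_{2d-2}$ by Hurewicz. The remaining task is to upgrade this to $(3d-4)$-connectivity of $c$, by computing $H_*(C)$ in the intermediate range through the long exact sequence of the cofibration $\conf{\R^d}{3}\to (\widetilde S^{d-1})^3\to C$ and the Arnold-relation description of $H^*(\conf{\R^d}{3})$, and comparing with the structure of $\widehat\Omega^2 \widehat S^{2d}$ in the same range.

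The main obstacle is the integral (not merely rational) control of $c$ in degrees between $2d-2$ and $3d-4$: while rationally the homology of both spaces is concentrated in degree $2d-2$ in this range, the integral homotopy of $\widehat\Omega^2 \widehat S^{2d}=\Omega^2 S^{2d}$ contains torsion coming from stable stems (via $\pi_i(\widehat\Omega^2 \widehat S^{2d})=\pi_{i+2}(S^{2d})$, which first contributes a class of order two in degree $2d-1$). The argument must verify that this torsion is either absent from the relevant range or is appropriately matched by the cell structure of $C$ inherited from the configuration space, so that $c$ achieves the required connectivity.
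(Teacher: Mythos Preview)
Your approach is essentially the same as the paper's, just phrased differently: the paper argues that the square is $(3d-4)$-cocartesian and then applies Blakers--Massey, and since the homotopy pushout of $*\leftarrow \conf{\R^d}{3}\to(\widetilde S^{d-1})^3$ is exactly your cofiber $C$, ``$(3d-4)$-cocartesian'' is precisely the statement that your map $c\colon C\to \widehat\Omega^2\widehat S^{2d}$ is $(3d-4)$-connected. Your reduction via the fiber sequence $\operatorname{hofib}(\iota)\to\operatorname{hofib}(f)\to\operatorname{hofib}(c)$ is correct and recovers the Blakers--Massey step.

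The genuine gap is that you stop short of establishing the $(3d-4)$-connectivity of $c$, and your stated obstacle---torsion in $\pi_*(\Omega^2 S^{2d})$ from stable stems---is a red herring. Connectivity of a map between simply-connected spaces is detected on \emph{homology}, not homotopy, so the $\Z/2$ in $\pi_{2d-1}(\Omega^2 S^{2d})$ is irrelevant. What you need is that $H_i(\Omega^2 S^{2d})=0$ for $2d-2<i\le 3d-4$, and this follows at once from Freudenthal: the double suspension map $S^{2d-2}\to\Omega^2 S^{2d}$ is $(4d-5)$-connected, so $H_i(\Omega^2 S^{2d})\cong H_i(S^{2d-2})$ for $i\le 4d-5$, and in particular vanishes in the range you care about. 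On the other side, the cofiber sequence shows $H_*(C)$ is $\Z$ in degree $2d-2$ and zero until degree $3(d-1)$. Your condition~(2) then forces $c$ to be an isomorphism on $H_{2d-2}$ (a surjection $\Z\to\Z$), and since both homologies vanish in the intermediate range, $c$ is $(3d-4)$-connected by the homology Whitehead theorem. That completes your argument, and it is exactly what the paper does.
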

\begin{proof}
By Lemma~\ref{lemma: arnold relation} and Corollary~\ref{corollary: homology}, the homology of the space $
\conf{\R^d}{3}$ is concentrated in degrees $0, d-1, 2(d-1)$. Similarly the homology of $({\widetilde S}^{d-1})^3$ is concentrated in degrees $i(d-1)$, where $i\le 3$. The map $$
\conf{\R^d}{3}\to ({\widetilde S}^{d-1})^3$$ induces an isomorphism on $H_{d-1}$ and a monomorphism on $H_{2(d-1)}$. The cokernel of this map in $H_{2(d-1)}$ is isomorphic to $\Z$, which is also isomorphic to $H_{2(d-1)}(\Omega^2 S^{2d})$. Our assumption implies that the homomorphism from the cokernel of $f$ in $H_{2(d-1)}$ to $H_{2(d-1)}(\Omega^2 S^{2d})$ is an epimorphism from $\Z$ to $\Z$. Therefore it is an isomorphism. Since all the spaces in the diagram~\ref{eq: classifying square} have trivial homology in dimension above $2(d-1)$ and below $3(d-1)$, it follows that the square is $3d-4$-cocartesian. Furthermore, the maps from $\conf{\R^d}{3}$ to $({\widetilde S}^{d-1})^3$ and to $*$ are $2d-3$ and $d-1$-connected respectively. By the Blakers-Massey theorem, the square~\eqref{eq: classifying square} is $3d-5$-cartesian.
\end{proof}
Now we are ready to construct a classifying map. We will use the Thom-Pontryagin collapse map associated with the diagonal inclusion $\widetilde S^{d-1}\hookrightarrow (\widetilde S^{d-1})^3$. To get a clean description of the $\Sigma_3$-equivariant properties of this collapse map, let us first consider a more general setting, where $M$ is a manifold with a free action of $\Sigma_2$. The action of $\Sigma_2$ can be extended to an action of $\Sigma_3$ via the surjective homomorphism $\Sigma_3\twoheadrightarrow \Sigma_2$. In this way, we consider $M$ as a space with an action of $\Sigma_3$. 

The group $\Sigma_3$ acts on $M^3$ via either one of the identifications
\[
M^3\cong \map_{\Sigma_2}(\Sigma_3, M)\cong \map(\Sigma_3/\Sigma_2, M).
\]
The diagonal inclusion $\Delta\colon M\hookrightarrow M^3$ is a $\Sigma_3$-equivariant map (note again that the action of $\Sigma_3$ on $M$ is not trivial). The normal bundle of this inclusion has an induced action of $\Sigma_3$. The normal bundle is $\Sigma_3$-equivariantly isomorphic to the quotient bundle $3\tau/\Delta(\tau)$. Here $\tau$ is the tangent bundle of $M$, $3\tau=\tau\oplus\tau\oplus\tau$, and $\Delta(\tau)$ is the diagonal copy of $\tau$ in $3\tau$. We denote the normal bundle by $\widehat{2\tau}$. It is the tensor product of $\tau$ with $\widehat \R^2$. Let $M^{\widehat 2\tau}$ denote the Thom space of the normal bundle.
The Thom-Pontryagin collapse map associated with $\Delta$ is a $\Sigma_3$-equivariant map $M^3\to M^{\widehat 2\tau}$.

Now apply this to the case $M=\widetilde S^{d-1}$, the $(d-1)$-dimensional sphere, endowed with the antipodal action of $\Sigma_2$. The Thom-Pontryagin collapse map has the form
\[
(\widetilde S^{d-1})^3\to (\widetilde S^{d-1})^{\widehat 2\tau}
\]
Note that this is an unpointed map, as the space $(\widetilde S^{d-1})^3$ does not have an equivariant basepoint. Sometimes we like to think of the collapse map as a pointed map 
\[
(\widetilde S^{d-1})^3_+\to (\widetilde S^{d-1})^{\widehat 2\tau}
\]
Let us take smash product of this map with $\widehat S^2$, to obtain the following $\Sigma_3$-equivariant map
\[
(\widetilde S^{d-1})^3_+\wedge \widehat S^2\to (\widetilde S^{d-1})^{\widehat 2\tau}\wedge\widehat S^2.
\]
Now observe that there is a homeomorphism 
\[
(\widetilde S^{d-1})^{\widehat 2\tau}\wedge\widehat S^2\cong (\widetilde S^{d-1})^{\widehat2(\tau\oplus \R)}.
\]
Next, recall that the tangent bundle $\tau$ of $S^{d-1}$ satisfies the isomorphism $\tau \oplus \R\cong \R^d$. Under this isomorphism, the natural action of $\Sigma_3$ on $\tau\oplus \R$ corresponds to the sign action on $\R^d$. It follows that there is a $\Sigma_3$-equivariant homeomorphism
\[
(\widetilde S^{d-1})^{\widehat2(\tau\oplus \R)}\cong (\widetilde S^{d-1})^{\widehat2(\R^d)}\cong \widetilde S^{d-1}_+ \wedge \widehat S^{2d}.
\]
The action of $\Sigma_3$ on $\widehat S^{2d}$ is induced by the tensor product of the standard action on $\widehat \R^2$ and the sign action on $\R^d$. But this is equivalent to just the standard action of $\widehat\R^2$, without the sign twist.

Next we compose this homeomorphism with the collapse map $\widetilde S^{d-1}_+ \wedge \widehat S^{2d}\to \widehat S^{2d}$, and pre-compose with the (suspended) Thom-Pontryagin collapse map above. We obtain the map
\[
(\widetilde S^{d-1})^3_+\wedge \widehat S^2\to \widehat S^{2d}.
\]
Taking an adjoint, we obtain an unpointed $\Sigma_3$-equivariant map
\begin{equation}\label{eq: first classifying}
(\widetilde S^{d-1})^3\to \widehat \Omega^2\widehat S^{2d}.
\end{equation}
This is our model for a classifying map.
\begin{lemma}\label{lemma: Thom Pontryagin obstruction}
The map~\eqref{eq: first classifying} is a classifying map.
\end{lemma}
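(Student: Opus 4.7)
My plan is to invoke Lemma~\ref{lemma: cocartesian}, so I need to verify its two hypotheses: that the composition $\conf{\R^d}{3}\to (\widetilde S^{d-1})^3 \to \widehat\Omega^2\widehat S^{2d}$ is $\Sigma_3$-equivariantly null-homotopic, and that the map induces an epimorphism on $H_{2d-2}$.

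For the first hypothesis, I will show that the image of $\conf{\R^d}{3}$ inside $(\widetilde S^{d-1})^3$ avoids an open $\Sigma_3$-invariant neighborhood of the diagonal. The map $\conf{\R^d}{3}\to(\widetilde S^{d-1})^3$ sends $(x_1,x_2,x_3)$ to the triple of unit vectors $u_{ij}=(x_j-x_i)/|x_j-x_i|$. Since $(x_2-x_1)+(x_3-x_2)+(x_1-x_3)=0$, the three unit vectors satisfy a linear relation $a\,u_{12}+b\,u_{23}+c\,u_{31}=0$ with $a,b,c>0$. A short computation shows this bounds the image uniformly away from the diagonal $\{u_{12}=u_{23}=u_{31}\}$: if all three $u_{ij}$ were close to a common unit vector $v$, then $(a+b+c)v\approx 0$, forcing $v\approx 0$, a contradiction. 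Choosing the tubular neighborhood used in the Thom--Pontryagin construction to be $\Sigma_3$-equivariantly contained in such a neighborhood of the diagonal, the Thom--Pontryagin collapse sends the image of $\conf{\R^d}{3}$ to the basepoint, producing the desired equivariant null-homotopy.

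For the second hypothesis, I will identify the class in $H^{2d-2}((\widetilde S^{d-1})^3)$ corresponding (by adjunction and the Hurewicz map, since $\widehat\Omega^2\widehat S^{2d}$ is $(2d-3)$-connected with $H^{2d-2}\cong\Z$) to the map~\eqref{eq: first classifying}. By the standard property of the Thom--Pontryagin construction, the collapse map $c\colon(\widetilde S^{d-1})^3\to(\widetilde S^{d-1})^{\widehat{2\tau}}$ pulls back the Thom class to the Poincar\'e dual of the diagonal $\Delta\colon\widetilde S^{d-1}\hookrightarrow(\widetilde S^{d-1})^3$. The subsequent smash with $\widehat S^2$, the identification $(\widetilde S^{d-1})^{\widehat{2\tau}}\wedge\widehat S^2\cong \widetilde S^{d-1}_+\wedge\widehat S^{2d}$, and the collapse to $\widehat S^{2d}$ preserve this class under the Thom/suspension isomorphisms. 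Hence the pullback of the fundamental class of $\widehat\Omega^2\widehat S^{2d}$ is $[\Delta]^\vee$, which in the ring $H^*((\widetilde S^{d-1})^3)\cong\Z[a_1,a_2,a_3]/(a_i^2)$ takes the form $a_1 a_2\pm a_2 a_3\pm a_1 a_3$ (this is the Arnold class up to sign). Pairing this with the cycle $\widetilde S^{d-1}\times\widetilde S^{d-1}\times\{pt\}$ (or equivalently, evaluating on the transverse cycle $\{pt\}\times\widetilde S^{d-1}\times\widetilde S^{d-1}$, whose intersection with $\Delta$ is a single point) yields $\pm 1$, so the pullback class is primitive. Dualizing, $f_*$ is an epimorphism $H_{2d-2}((\widetilde S^{d-1})^3)\twoheadrightarrow H_{2d-2}(\widehat\Omega^2\widehat S^{2d})\cong\Z$.

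The main obstacle I expect is the bookkeeping in the second step: carefully tracking the Thom class and orientation signs through the chain of identifications (Thom--Pontryagin collapse, smashing with $\widehat S^2$, trivialization $\tau_{S^{d-1}}\oplus\R\cong\R^d$, adjunction) to confirm that the pullback of the fundamental class is precisely the Poincar\'e dual of the diagonal rather than some nontrivial multiple. Once this identification is in hand, surjectivity on $H_{2d-2}$ follows from the transverse intersection computation, and both hypotheses of Lemma~\ref{lemma: cocartesian} are met, completing the proof.
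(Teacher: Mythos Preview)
Your proposal is correct and follows essentially the same approach as the paper: both verify the two hypotheses of Lemma~\ref{lemma: cocartesian}, using the fact that the image of $\conf{\R^d}{3}$ avoids the thin diagonal for the first, and a transverse intersection of $S^{d-1}\times S^{d-1}\times\{*\}$ with the diagonal for the second. Your treatment of hypothesis~(2) is phrased cohomologically (identifying the pullback class as the Poincar\'e dual of the diagonal), whereas the paper works directly on the adjoint map $S^2\wedge (S^{d-1})^3_+\to S^{2d}$ in homology, but the underlying computation is the same.
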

\begin{proof}
We need to check that the map satisfies the hypotheses of Lemma~\ref{lemma: cocartesian}. The first hypothesis is that the composite map 
\[
\conf{\R^d}{3}\to ({\widetilde S}^{d-1})^3\to  \widehat\Omega^2 \widehat S^{2d}
\]
is equivariantly null homotopic. By construction, the second map factors through the Thom-Pontryagin collapse map associated with the inclusion of the thin diagonal of $(\widetilde S^{d-1})^3$. Clearly the space $\conf{\R^d}{3}$, which is the complement of the fat diagonal of $(\widetilde S^{d-1})^3$, is contained in the complement of the thin diagonal, and therefore the restriction of the Thom-Pontryagin collapse to $\conf{\R^d}{3}$ is (equivariantly) null homotopic.

The second hypothesis that we need to check is that the following homomorphism is an epimorphism
\[
H_{2d-2}(({\widetilde S}^{d-1})^3)\to  H_{2d-2}(\widehat\Omega^2 \widehat S^{2d})
\]
This is equivalent to showing that the adjoint map
\[
S^2\wedge (S^{d-1}\times S^{d-1}\times S^{d-1})_+ \to S^{2d}
\]
Induces an epimorphism on $H_{2d}$ (till the end of this proof we will omit the `tilde' and `hat' decorations, since we are not concerned with the action of $\Sigma_3$ at this point).
Once again we recall that this map factors through the Thom-Pontryagin collapse as follows 
\[
S^2\wedge (S^{d-1}\times S^{d-1}\times S^{d-1})_+\to S^2\wedge (S^{d-1})^{2\tau}\xrightarrow{\cong}  S^{d-1}_+ \wedge S^{2d}\to  S^{2d}.
\]
We need to prove that this composite map induces an epimorphism on $H_{2d}$. To see this, choose a point $*\in S^{d-1}$ and consider the inclusion $S^{d-1}\times S^{d-1}\times \{*\} \hookrightarrow (S^{d-1})^3$. This inclusion intersects the thin diagonal transversely at a single point $(*,*,*)\in (S^{d-1})^3$. It follows quite easily that the composite map 
\[
S^2\wedge S^{d-1}\times S^{d-1}\times \{*\}_+ \to S^2\wedge (S^{d-1}\times S^{d-1}\times S^{d-1})_+ \to S^{2d}
\]
is the double suspension of the Thom-Pontryagin collapse map associated with the inclusion of a point $(*,*)\hookrightarrow S^{d-1}\times S^{d-1}$. In other words, it is the double suspension of the map $S^{d-1}\times S^{d-1}\to S^{2d-2}$ that collapses the complement of a Euclidean neighborhood of $(*,*)$. Clearly this map is surjective on $H_{2d}$, and therefore the map $S^2\wedge (S^{d-1}\times S^{d-1}\times S^{d-1})_+ \to S^{2d}$ is also surjective on $H_{2d}$.
\end{proof}

\label{geometric interpretation}
Lemma~\ref{lemma: Thom Pontryagin obstruction} leads to a geometric interpretation of the obstruction class $\obs{K}$ as a triple collinearity condition. In the manifold case, such an interpretation was hinted at by Munson~\cite{Munson}. We will describe this geometric interpretation in the case of embedding a 2-dimensional complex in $\R^4$.

To begin with, Lemma~\ref{lemma: Thom Pontryagin obstruction} tells that the obstruction class $\obs{K}$ is the pullback of a Thom class. Suppose, as usual, that $K$ is a 2-dimensional simplicial complex and we have a $\Sigma_2$-equivariant map \[f_2\colon K\times K\setminus K \to \widetilde S^3.\] 
Recall that we associate with $f_2$ a $\Sigma_3$-equivariant map
\[
\begin{array}{ccccccccc}
f_2^3\circ p_K& \colon &  \conf{K}{3}&\to& \widetilde S^3&\times &\widetilde S^3&\times &\widetilde S^3 \\[5pt]
& &(k_1, k_2, k_3) &  \mapsto & (f_2(k_1, k_2)&,& f_2(k_2, k_3)&,& f_2(k_3, k_1)).
\end{array}
\]
Recall that the cohomological obstruction $\obs{K}$ is determined by the map  $f_2^3\circ p_K$. The following lemma is really a corollary of Lemma~\ref{lemma: Thom Pontryagin obstruction}. Let $\widetilde S^3_\Delta\subset \widetilde S^3 \times \widetilde S^3\times \widetilde S^3$ be the diagonal copy of $\widetilde S^3$.
\begin{lemma}\label{lemma: Thom pullback}
The obstruction class $\obs{K}$ is the pullback of the Thom class of the normal bundle of $\widetilde S^3_\Delta$ in $\widetilde S^3 \times \widetilde S^3\times \widetilde S^3$.
\end{lemma}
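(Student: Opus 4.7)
The plan is to recognize the classifying map constructed in Section~\ref{sec: construction} as a geometric representative of the Thom class of the diagonal, and then to pull this recognition back along $f_2^{\times 3}\circ p_K$.

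First I would unwind the definition of $\obs{K}$ from Section~\ref{subsection: secondary}. By construction, $\obs{K}$ is the cohomology class represented by the composition
\[
\conf{K}{3}\xrightarrow{p_K} \conf{K}{2}^{\times 3} \xrightarrow{f_2^{\times 3}} (\widetilde S^{3})^{\times 3} \longrightarrow K(\Z[-1], 6),
\]
where the last arrow is the cohomological classifying map of Lemma~\ref{lemma: cohomology classifying}. By Lemma~\ref{lemma: Thom Pontryagin obstruction} and the discussion at the end of Section~\ref{subsection: cobordism} relating the cohomological classifying map to the framed cobordism one via the Hurewicz map, this cohomological classifying map may be factored as
\[
(\widetilde S^{3})^{\times 3}\xrightarrow{c} \widehat\Omega^{2}\widehat S^{8}\longrightarrow K(\Z[-1], 6),
\]
where $c$ is the map~\eqref{eq: first classifying} and the second arrow is induced by the Hurewicz map on the fundamental class.

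Second, I would recall from Section~\ref{sec: construction} that $c$ was built precisely as (the adjoint of) the Thom-Pontryagin collapse map $(\widetilde S^{3})^{\times 3}\to (\widetilde S^{3})^{\widehat{2\tau}}$ for the diagonal inclusion $\widetilde S^{3}_{\Delta}\hookrightarrow (\widetilde S^{3})^{\times 3}$, followed by the canonical equivariant identification of the Thom space with $\widetilde S^{3}_{+}\wedge \widehat S^{8}$ and the subsequent collapse to $\widehat S^{8}$. By the standard construction of the Thom class via Pontryagin-Thom theory, the cohomology class on $(\widetilde S^{3})^{\times 3}$ represented by this composition with the Eilenberg-MacLane fundamental class is, by definition, the pullback along the collapse map of the Thom class of $\widehat{2\tau}$. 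This is what we mean by ``the Thom class of the normal bundle of $\widetilde S^{3}_{\Delta}$ in $(\widetilde S^{3})^{\times 3}$'' as a class in $H^{6}_{\Sigma_{3}}((\widetilde S^{3})^{\times 3};\Z[-1])$.

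Combining the two steps, $\obs{K}$ is the pullback of this Thom class along $f_2^{\times 3}\circ p_K$, as claimed. The one place requiring care is verifying that the equivariant orientation character of the rank-$6$ normal bundle $\widehat{2\tau}=\tau\otimes \widehat{\R}^{2}$ agrees with the sign representation $\Z[-1]$ of $\Sigma_{3}$ that appears in the coefficients of $\obs{K}$; I expect this bookkeeping to be the principal obstacle, but it has in effect already been done in Section~\ref{sec: construction} in the course of identifying the $\Sigma_{3}$-action on $\widehat S^{8}$, so no additional argument is needed.
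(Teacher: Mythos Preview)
Your proposal is correct and follows essentially the same route as the paper's own proof: both identify $\obs{K}$ via the factorization through the Thom--Pontryagin collapse map constructed in Section~\ref{sec: construction}, observe that the composition $(\widetilde S^{3})^{3}\to K(\Z[-1],6)$ obtained this way represents the Thom class of the normal bundle of the diagonal, and conclude by pulling back along $f_2^{\times 3}\circ p_K$. Your version is slightly more explicit about the intermediate step connecting the cohomological classifying map of Lemma~\ref{lemma: cohomology classifying} to the framed-cobordism one via Hurewicz, and you flag the orientation-character bookkeeping, but the substance is identical.
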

\begin{proof}
It follows from Lemma~\ref{lemma: Thom Pontryagin obstruction}, that $\obs{K}$ is represented by the following composition of maps from $\conf{K}{3}$ to an Eilenberg - Mac Lane space
\[
\conf{K}{3}\xrightarrow{p_K} (\conf{K}{2})^3\xrightarrow{f_2^{\times 3}} (\widetilde S^3)^3 \to (\widetilde S^3)^{2\tau} \to \widehat \Omega^2\widehat S^{8}\to \widehat\Omega^2\Omega^\infty H\mathbb Z \wedge \widehat S^{8}\cong K(\mathbb Z[-1]; 6).
\]
Unraveling the definitions, one finds that the composition of the maps $(\widetilde S^3)^3\to K(\mathbb Z[-1]; 6)$ represents the Thom class of the normal bundle of the diagonal in $(\widetilde S^3)^3$. It follows that the composition of the maps $\conf{K}{3}\to K(\mathbb Z[-1]; 6)$ represents the pullback of the Thom class along $f_2^3\circ p_K$.
\end{proof}

Now we can use Lemma~\ref{lemma: Thom pullback} to interpret $\obs{K}$ as an intersection class. The lemma says that that $\obs{K}$ is the pullback of the Thom class along $f_2^3\circ p_K$.
For the purpose of geometric interpretation, let us restrict the domain of this map to $\confs{K}{3}$, where $\confs{K}{3}$ denotes, as usual, the union of triple products of simplices $\sigma_1\times\sigma_2\times \sigma_3$ of $K$ that are pairwise disjoint. $\confs{K}{3}$ is a subspace of $\conf{K}{3}$.
After subdividing if necessary, we may assume that the inclusion $\confs{K}{3}\hookrightarrow \conf{K}{3}$ is a homotopy equivalence. 

For the purpose of this discussion we restrict the domain of $f_2^3\circ p_K$ to be $\confs{K}{3}$ rather than $\conf{K}{3}$.

Without loss of generality, $(f_2^3\circ p_K)(\conf{K}{3})$ may be assumed to intersect $\widetilde S^3_\Delta$ transversely.
Then the set 
\[(f_2^3\circ p_K)^{-1}(\widetilde S^3_\Delta)=\{(k_1, k_2, k_3)\in\confs{K}{3}\mid f_2(k_1, k_2)=f_2(k_2, k_3)=f_2(k_3, k_1)\}\]
is a finite collection of points,  contained in the union of interiors of open $6$-dimensional cells of $\confs{K}{3}$. 


Consider the following cellular cochain $\mathcal{I}_3\colon C_6(\confs{K}{3})\to \mathbb Z$. For every cell of the form $\s_i\times \s_j\times \s_k$, where $\s_i, \s_j, \s_k$ are pairwise-disjoint $2$-dimensional simplices of $K$, $\mathcal{I}_3(\s_i\times \s_j\times \s_k)$ is defined to be the algebraic intersection number $$\left( f_2^3\circ p_K(\s_i\times \s_j\times \s_k)\right) \, \cdot \, S^3_\Delta.$$ 
The algebraic intersection number depends on a choice of orientation of each $2$-dimensional simplex of $K$ and also on a choice of orientation of $\widetilde S^3$.

Recall that $\Sigma_3$ acts on $\widetilde S^3$ by the pullback of the antipodal action of $\Sigma_2$ along the surjective homomorphism $\Sigma_3\twoheadrightarrow\Sigma_2$. It follows that $\Sigma_3$ acts trivially on the set of orientations of $\widetilde S^3$. On the other hand, the action of $\Sigma_3$ on the set of orientations of $\widetilde S^3\times \widetilde S^3 \times \widetilde S^3$ is non-trivial; it is the pullback of the free action of $\Sigma_2$. This implies that the cochain $\mathcal I_3$ really is an equivariant cochain with coefficients in the sign representation. In symbols, $\mathcal I_3\in C^6(\confs{K}{3}; \mathbb Z[-1])$. Since $\confs{K}{3}$ is a $6$-dimensional cell complex, $\mathcal I_3$ is automatically a cocycle, so it represents an element in equivariant cohomology $[\mathcal I_3]\in H^6_{\Sigma_3}(\confs{K}{3}; \mathbb Z[-1])$.
Now we are ready for the main result of this subsection.
\begin{proposition}
The cohomology class $[\mathcal I_3]$ coincides with the obstruction class $\obs{K}$.
\end{proposition}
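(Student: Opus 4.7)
By the Thom pullback lemma just established, the obstruction class $\obs{K}$ equals $g^{*}\tau$, where $g = f_2^3\circ p_K$ (restricted to $\confs{K}{3}$) and $\tau\in H^6((\widetilde S^3)^3;\mathbb{Z}[-1])$ is the Thom class of the normal bundle of the diagonal $\widetilde S^3_\Delta \hookrightarrow (\widetilde S^3)^3$. Thus my plan is to identify the pullback $g^{*}\tau$ with the cochain $\mathcal{I}_3$ at the cellular level.

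After a small $\Sigma_3$-equivariant perturbation of $f_2$ (which leaves its equivariant homotopy class, and hence $\obs{K}$, unchanged), I may assume the map $g$ is transverse to $\widetilde S^3_\Delta$. Since $\widetilde S^3_\Delta$ has codimension $6$ in $(\widetilde S^3)^3$ and $\confs{K}{3}$ has dimension $6$, the preimage $g^{-1}(\widetilde S^3_\Delta)$ is then a finite collection of points, all lying in the interiors of top-dimensional open cells $\s_i\times \s_j\times \s_k$, where $\s_i,\s_j,\s_k$ are pairwise vertex-disjoint $2$-simplices. In particular the intersection number $(g(\s_i\times \s_j\times \s_k))\cdot \widetilde S^3_\Delta$ is a well-defined integer for each top cell.

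The core of the argument is the standard geometric interpretation of Thom-class pullbacks: for a transverse map $g$ from a CW complex of dimension equal to the codimension of the submanifold, the cellular cochain representing $g^{*}\tau$ evaluates on each top cell $D^6$ by the signed count of points in $g(D^6)\cap \widetilde S^3_\Delta$, i.e.\ by the algebraic intersection number $g(D^6)\cdot \widetilde S^3_\Delta$. This is precisely the defining property of the Thom class via the Thom–Pontryagin collapse, and it agrees by construction with the value $\mathcal{I}_3(\s_i\times \s_j\times \s_k)$. Hence $g^{*}\tau$ and $\mathcal{I}_3$ coincide as cellular $6$-cochains and a fortiori represent the same cohomology class.

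It remains to verify that the $\Sigma_3$-action and the coefficient twist $\mathbb{Z}[-1]$ match on both sides. The normal bundle of $\widetilde S^3_\Delta$ in $(\widetilde S^3)^3$ is $\Sigma_3$-equivariantly isomorphic to the bundle $\widehat{2\tau}$ identified in Section~\ref{sec: construction}, on which $\Sigma_3$ acts on orientations through the sign homomorphism; this is precisely the source of the coefficient system $\mathbb{Z}[-1]$. On the geometric side, an odd permutation of the factors of $(\widetilde S^3)^3$ reverses the orientation of the ambient space while preserving that of the diagonal $\widetilde S^3_\Delta$, so the intersection numbers defining $\mathcal{I}_3$ transform in the sign representation, matching the twist. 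The main delicate point, and the only step requiring genuine care, is arranging transversality $\Sigma_3$-equivariantly; this is handled by a standard equivariant general-position argument on a $\Sigma_3$-invariant triangulation of $\confs{K}{3}$. This completes the identification $[\mathcal{I}_3]=g^{*}\tau=\obs{K}$.
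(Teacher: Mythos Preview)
Your proof is correct and follows essentially the same approach as the paper: invoke the Thom pullback lemma to identify $\obs{K}$ with $g^*\tau$, then use the standard fact that under transversality the pullback of a Thom class is computed cellularly by intersection numbers with the submanifold. The paper's proof is a terse two sentences stating exactly this, whereas you spell out the transversality setup, the cellular cochain identification, and the sign-representation check in more detail; but the underlying argument is the same.
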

\begin{proof}
It follows from Lemma~\ref{lemma: Thom pullback} that $\obs{K}$, or rather the restriction of $\obs{K}$ to $\confs{K}{3}$, is the pullback of the Thom class of the normal bundle of the diagonal in $\widetilde S^3\times \widetilde S^3\times \widetilde S^3$. Under a transversality assumption, the pullback of the Thom class is the intersection number with the diagonal, which is the definition of $\mathcal I_3$.
\end{proof}
\begin{remark}
Here is a heuristic explanation why the intersection number $\mathcal I_3$ is an obstruction to the existence of an embedding. Suppose that $f_2$ is a normalized deleted square of some embedding $f\colon K\hookrightarrow \R^4$. I.e., suppose that
\[
f_2(k_1, k_2)=\frac{f(k_2)-f(k_1)}{|f(k_2)-f(k_1)|}
\]
Then for all $k_1, k_2, k_3$, the three vectors $f(k_2)-f(k_1), f(k_3)-f(k_2), f(k_1)-f(k_3)$ sum up to zero. On the other hand, $(f_2^3\circ p_K)^{-1}(\widetilde S^3_\Delta)$ represents the set of triples $k_1, k_2, k_3$ where these three vectors are {\it co-directed}. It is natural that this set represents an obstruction to the existence of $f$.

(It would be interesting to compare this with the interpretation of the second coefficient of the Conway polynomial of a knot in terms of collinear triples in \cite{BCSS}.)
\end{remark}

\section{Examples where the obstruction does not vanish} \label{sec:Examples}

An explicit $2$-complex $K$ which does not embed into ${\mathbb R}^4$, but has a vanishing van Kampen obstruction was constructed in \cite{FKT}. The proof of non-embeddability in \cite{FKT} is group-theoretic in nature (using the Stallings theorem) and is quite different from the methods of this paper. In this section we reprove the non-embeddability of $K$ by showing that our obstruction is realized in this example. 

Let us begin by reviewing the construction of the complex $K$ in~\cite{FKT}. Let $\Delta^6$ (respectively ${\Delta^6}'$) be the six-dimensional simplex with vertex set $v_1,\ldots ,v_7$ (respectively $v_1',\ldots ,v_7'$).
Denote the triangle on vertices $v_a, v_b, v_c$ by $\Delta_{abc}$ and similarly the triangle on vertices $v_a', v_b', v_c'$ by $\Delta_{abc}'$.

Let $\mathrm{sk}^n\Delta^6$ denote the $n$-skeleton of $\Delta^6$.  Let $G_7$ (respectively $G'_7$) be the 2-skeleton of $\Delta^6$ minus the 2-cell associated with the triangle $\Delta_{123}$ (respectively the analogous subcomplex of ${\Delta^6}'$). 

Let $K_0=G_7\vee G'_7$ be the wedge sum obtained by identifying $v_1$ and $v_1'$ (in~\cite{FKT} the authors add an edge $v_1v_1'$, but this difference does not matter). Finally, let $K$ be the complex obtained by attaching to $K_0$ a 2-cell along the commutator of the loops $v_1v_2v_3v_1$ and $v_1'v_2'v_3'v_1'$. The closure of this 2-cell is a torus embedded in $K$. We denote this torus simply by $\Delta_{123}\times \Delta_{123}'$.

\begin{remark} 
\label{rem: examples}
This example admits an immediate generalization to a family of examples, where instead of two copies of $G_7$ and a basic commutator of two loops as above, one takes $n$ copies of the $2$-complex $G_7$ and an element of the mod $2$ commutator subgroup of the free group $F_n$ on $n$ generators. The analysis below also goes through for such commutators which are not in the next (second, in the convention of \cite[Lemma 7]{FKT}) term of the mod $2$ lower central series of $F_n$; for simplicity of notation we concentrate on the basic example described above. We expect that the examples corresponding to higher commutators are detected by our  higher obstructions $\obsn{K}, \gobsn{K}$; see Section \ref{sec: Questions}.
\end{remark}

As explained in~\cite{FKT}, van Kampen showed that $\mathrm{sk}^2\Delta^6$ can not be embedded in $\mathbb R^4$, but $G_7$ can. It follows that the complex $K_0$ can be embedded in $\mathbb R^4$. 

Let $S\subset G_7$ be the sphere that is the union of the four 2-cells that are disjoint from the triangle $\Delta_{123}$, namely the cells corresponding to $\Delta_{456}, \Delta_{457}, \Delta_{467}$ and $\Delta_{567}$. $S$ is the dual tetrahedron to the triangle $\Delta_{123}$ in the $6$-simplex. Dually, let $S'\subset G_7'$ be the dual sphere to the triangle $\Delta_{123}'$.

The following key result about embeddings of $K_0$ into $\mathbb R^4$ is proved in~\cite{FKT} (we do not reprove it).
\begin{proposition}[\cite{FKT}, Lemma 6]\label{prop: link}
For any PL embedding of $K_0$ into $\mathbb R^4$, the linking numbers of $S, S'$ and $\Delta_{123}, \Delta_{123}'$ satisfy the following
(see figure~\ref{fig:Example} for a schematic illustration):
\[
\mathrm{link}(S, \Delta_{123})\equiv \mathrm{link}(S', \Delta_{123}')\equiv 1 (\mathrm{mod} 2).
\]
\[
\mathrm{link}(S, \Delta_{123}')= \mathrm{link}(S', \Delta_{123})=0.
\]
\end{proposition}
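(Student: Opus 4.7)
The plan is to derive both statements from the non-vanishing of the classical van Kampen obstruction of $\mathrm{sk}^2\Delta^6$ together with simple homological computations in $G_7, G_7'$.

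For the first congruence, I would argue as follows. Suppose a PL embedding $K_0 \hookrightarrow \mathbb R^4$ is given, so in particular $G_7$ is embedded. Choose a general position PL disk $D\subset \mathbb R^4$ with $\partial D = \partial\Delta_{123}$ and define a general position map $f\co \mathrm{sk}^2\Delta^6\to \mathbb R^4$ by extending the embedding of $G_7$ using $D$ as the image of $\Delta_{123}$. Since the van Kampen obstruction of $\mathrm{sk}^2\Delta^6$ is nonzero mod $2$, the van Kampen cocycle $o_f$ from \eqref{eq: vk cohain} satisfies
\[
\sum_{\sigma,\tau\text{ non-adjacent}} f(\sigma)\cdot f(\tau) \equiv 1 \pmod{2}.
\]
Because $f|_{G_7}$ is an embedding, every pair $(\sigma,\tau)$ of non-adjacent $2$-cells contained in $G_7$ contributes $0$. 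The only remaining pairs are $(\Delta_{123},\tau)$ where $\tau$ is a triangle of $\mathrm{sk}^2\Delta^6$ sharing no vertex with $\Delta_{123}$; these triangles have their vertices in $\{4,5,6,7\}$, so they are exactly the four $2$-cells $\Delta_{456},\Delta_{457},\Delta_{467},\Delta_{567}$ making up $S$. Hence
\[
D\cdot S = \sum_i D\cdot \sigma_i \equiv 1 \pmod{2},
\]
and since $D$ is a Seifert surface for $\partial \Delta_{123}$ this intersection number is by definition $\mathrm{link}(\partial\Delta_{123}, S)$. The symmetric argument applied to $G_7'$ gives the congruence for $\mathrm{link}(S',\Delta_{123}')$.

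For the vanishing of $\mathrm{link}(S,\Delta_{123}')$, I would exhibit a Seifert surface for $\partial\Delta_{123}'$ that misses $S$. Consider the $2$-chain $c := \Delta_{124}' - \Delta_{134}' + \Delta_{234}'$ in $G_7'$. Since the boundary of the $3$-simplex on $\{v_1',v_2',v_3',v_4'\}$ is the $2$-cycle $c - \Delta_{123}'$, we have $\partial c = \partial\Delta_{123}'$. Under the embedding $K_0\hookrightarrow \mathbb R^4$, the image of $c$ lies inside the embedded subcomplex $G_7'$, which meets $G_7$ only at the wedge point $v_1 = v_1'$. Since $S$ uses only vertices $\{4,5,6,7\}$, the vertex $v_1$ does not lie on $S$, and hence $c\cdot S = 0$ in $\mathbb R^4$. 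Therefore $\mathrm{link}(S,\partial\Delta_{123}') = c\cdot S = 0$, and symmetrically $\mathrm{link}(S',\partial\Delta_{123}) = 0$.

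The main conceptual point is the first claim: one has to recognize that the classical van Kampen intersection sum for $f$ collapses to precisely the intersections of the filling disk $D$ with the dual sphere $S$, which by definition equal the linking number in $\mathbb R^4$. The second claim is then almost formal once one exhibits an explicit null-homology of $\partial \Delta_{123}'$ supported inside $G_7'$, which is available because $\partial\Delta_{123}'$ is the boundary of three of the four faces of a tetrahedron on $\{v_1',v_2',v_3',v_4'\}$, all of which lie in $G_7'$.
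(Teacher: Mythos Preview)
The paper does not give its own proof of this proposition; it cites \cite{FKT} and explicitly says ``we do not reprove it.'' Your argument is essentially the one in \cite{FKT} and is correct.

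One small point of precision in the first part: the implication you state, ``the van Kampen obstruction of $\mathrm{sk}^2\Delta^6$ is nonzero mod $2$, hence $\sum_{\sigma,\tau} f(\sigma)\cdot f(\tau)\equiv 1\pmod 2$,'' runs slightly backwards. Nonvanishing of the cohomology class does not by itself single out this particular numerical sum; one needs a homology class to pair against. What you are really invoking is van Kampen's sharper statement that for \emph{every} general position map $f\colon \mathrm{sk}^2\Delta^6\to\R^4$ the sum over \emph{unordered} pairs of disjoint $2$-simplices is odd --- this is exactly how the nonvanishing of the obstruction is proved in the first place. With that fact cited directly, the rest of your argument (collapsing the sum to $D\cdot S$, and the explicit Seifert chain $c\subset G_7'$ for the vanishing claim) goes through as written.
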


\begin{figure}[ht]
\centering
\includegraphics[height=3.5cm]{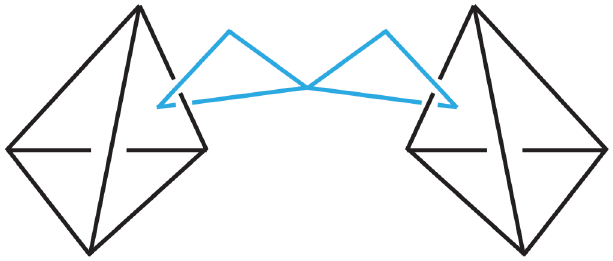}
{    
       \put(-223,75){$S$}
        \put(-142,52){$\Delta_{123}$}
        \put(-25,75){$S'$}
        \put(-100,52){$\Delta_{123}'$}
    }  
    \caption{The $2$-complex $K$ is obtained by attaching a $2$-cell along the commutator of $\Delta_{123}$ and $\Delta_{123}'$.}
\label{fig:Example}
\end{figure}
It is also shown in~\cite{FKT} that the van Kampen obstruction vanishes on $K$.
Now we can state the main result of this section. Of course it is also proved in~\cite{FKT}, using fundamental group instead of cohomology.
Another, more recent, viewpoint on this result using triple intersections may be found in \cite[Lemma 2.4]{AMSW}. We present a novel approach, in terms of the cohomology of configuration spaces and specifically the Arnold class.
\begin{proposition}\label{prop: example}
Suppose $f_2\colon \conf{K}{2}\to \conf{\mathbb R^4}{2}$ is a $\Sigma_2$-equivariant map, such that the restriction of $f_2$ to $\conf{K_0}{2}$ is equivalent to the deleted square of some embedding $f\colon K_0\hookrightarrow \mathbb R^4$. Then the following composition map 
\begin{equation}\label{eq: composition}
\conf{K}{3}\xrightarrow{p_K} \conf{K}{2}\times \conf{K}{2}\times \conf{K}{2}\xrightarrow{f_2^3} \conf{\mathbb R^4}{2}\times \conf{\mathbb R^4}{2}\times \conf{\mathbb R^4}{2}
\end{equation}
does not lift to a $\Sigma_3$-equivariant map
\[
\conf{K}{3}\to\conf{\mathbb R^4}{3}.
\]
\end{proposition}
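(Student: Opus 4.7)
The plan is to show $\obs{K}\ne 0$ by pairing it with an explicit $6$-cycle in $\conf{K}{3}$ built from the three $2$-cycles naturally present in $K$: the torus $T:=\Delta_{123}\times\Delta_{123}'$ and the dual spheres $S\subset G_7$, $S'\subset G_7'$. Their vertex sets $\{v_1,v_2,v_3,v_1',v_2',v_3'\}$, $\{v_4,v_5,v_6,v_7\}$, and $\{v_4',v_5',v_6',v_7'\}$ are pairwise disjoint, so the triple product $T\times S\times S'$ maps into $\conf{K}{3}$ and defines a homology class $[C]\in H_6(\conf{K}{3})$.

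By Lemma \ref{lem: o3 Arnold}, the obstruction $\obs{K}$ is the pullback along $f_2^3\circ p_K$ of the Arnold class. Writing $v:=f_2^*(u)\in H^3(\conf{K}{2})$ for the generator $u\in H^3(\conf{\R^4}{2})$, this pullback equals
\[
\pi_{12}^*(v)\cup\pi_{23}^*(v)\;-\;\pi_{12}^*(v)\cup\pi_{31}^*(v)\;+\;\pi_{23}^*(v)\cup\pi_{31}^*(v),
\]
where $\pi_{ij}\co \conf{K}{3}\to\conf{K}{2}$ projects onto coordinates $(i,j)$. The first simplification is that $H^3(S\times S')=H^3(S^2\times S^2)=0$ by K\"unneth, so $v|_{S\times S'}=0$, killing the first and third terms upon restriction to $[C]$. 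K\"unneth further gives $H^3(T\times S)=H^1(T^2)\otimes H^2(S^2)\cong\Z^2$, so $v|_{T\times S}=\alpha\otimes[S]^*$ for some $\alpha\in H^1(T^2)$, and analogously $v|_{S'\times T}=[S']^*\otimes\gamma$. A direct K\"unneth calculation on the surviving middle term then yields
\[
\langle\obs{K},[C]\rangle\;=\;-\bigl(\alpha(a)\gamma(b)-\alpha(b)\gamma(a)\bigr),
\]
where $a:=\partial\Delta_{123}$ and $b:=\partial\Delta_{123}'$ are the standard generators of $H_1(T^2)$.

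The main step, and the technical heart of the argument, is identifying the four numbers $\alpha(a),\alpha(b),\gamma(a),\gamma(b)$ with linking numbers in $\R^4$. The point is that although $T\not\subset K_0$, the classes $a$ and $b$ both lie in $K_0$, so evaluating $\alpha$ on $[a]$ only requires $f_2$ on $a\times S\subset \conf{K_0}{2}$; by hypothesis $f_2|_{\conf{K_0}{2}}$ is equivariantly homotopic to the deleted square of an embedding $f\co K_0\hookrightarrow\R^4$, which identifies $\alpha(a)$ with the degree of the Gauss map $a\times S\to S^3$, and in turn, via a standard Pontryagin--Thom argument, with the linking number $\mathrm{lk}(f(a),f(S))$. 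Proposition \ref{prop: link} then gives $\alpha(a)$ and $\gamma(b)$ odd and $\alpha(b)=\gamma(a)=0$, so $\langle\obs{K},[C]\rangle$ is an odd integer. Since this integer is the pairing with $[C]$ of the image of $\obs{K}$ under the forgetful map $H^6_{\Sigma_3}(\conf{K}{3};\Z[-1])\to H^6(\conf{K}{3};\Z)$, its nonvanishing implies $\obs{K}\ne 0$ already at the equivariant level, and Proposition \ref{prop: O3} rules out the existence of any $\Sigma_3$-equivariant lift of $(f_2)^3\circ p_K$ to $\conf{\R^4}{3}$.
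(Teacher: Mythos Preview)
Your proof is correct and follows essentially the same approach as the paper: both arguments restrict to the product of the three disjoint subcomplexes $T$, $S$, $S'$ inside $\conf{K}{3}$, use $H^3(S\times S')=0$ to kill two of the three summands of the Arnold class, and invoke Proposition~\ref{prop: link} to show the remaining summand evaluates to an odd integer. The only differences are cosmetic---you phrase the computation as a homology pairing $\langle\obs{K},[C]\rangle$ while the paper pulls back the Arnold class along the inclusion $i$ and checks it is nonzero in $H^6(T\times S\times S')$, and you order the factors as $T\times S\times S'$ rather than $S\times S'\times T$---but the substance is identical.
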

It follows in particular that no embedding $K_0\hookrightarrow \mathbb R^4$ can be extended to an embedding $K\hookrightarrow \mathbb R^4$.  Also recall from Remark \ref{rem: Wh disk in example} that an embedding of $K_0$ extends to an order $1$ Whitney tower. It follows from Lemma \ref{Whitney-config} and Propositions \ref{prop: link} and \ref{prop: example} that no embedding of $K_0$ extends to an order $2$ tower. A more general statement is likely to be true, that there does not exist any map $K\longrightarrow \R^4$ (not necessarily restricting to an embedding of $K_0$) admitting an order $2$ Whitney tower. Its proof requires an extension of Proposition \ref{prop: link} from embeddings to maps of Whitney towers which is outside the scope of this paper.

To prove the proposition, we give a cohomological interpretation of our obstruction $\obs{K}$ in terms of the Arnold class, which may be of independent interest. Consider, once again, the problem of constructing a $\Sigma_3$-equivariant lift in a diagram of the following form
\[ \begin{tikzcd}
  & \conf{\R^4}{3} \arrow[d, "p_{\R^4}"]  \\
\conf{K}{3} \arrow[r, "f_2^3\circ p_K "]\arrow[ru, dashed, shift left=2] & \conf{\R^4}{2}\times \conf{\R^4}{2}\times \conf{\R^4}{2}
\end{tikzcd}
\]
Recall the definition of the Arnold class 
\[u\otimes u\otimes 1- u\otimes 1\otimes u+1\otimes u\otimes u\in H^6(\conf{\mathbb R^4}{2}\times \conf{\mathbb R^4}{2}\times \conf{\mathbb R^4}{2}).\]
By Lemma~\ref{lemma: arnold relation}, this class generates the kernel of $p_{\R^4}$ in $H^6$. We get the following easy sufficient condition for our obstruction to be non-zero.
\begin{lemma}\label{lemma: easy cohomological}
Referring to the diagram above, suppose $h^*(u\otimes u\otimes 1- u\otimes 1\otimes u+1\otimes u\otimes u)\ne 0$. Then a lift does not exists and~$\obs{K}\ne 0$.
\end{lemma}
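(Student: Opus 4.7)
The plan is to identify the obstruction class $\obs{K}$ with the pullback of the Arnold class, up to a sign, so that both assertions of the lemma become immediate.

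First, I would recall from Lemma~\ref{lemma: cohomology classifying} that the $\Sigma_3$-equivariant map $\conf{\R^4}{2}^3 \to K(\Z[-1], 6)$ used in the definition of $\obs{K}$ represents a cohomology class $\kappa\in H^6_{\Sigma_3}(\conf{\R^4}{2}^3; \Z[-1])$ satisfying $p_{\R^4}^*(\kappa)=0$. By the Arnold relation (Lemma~\ref{lemma: arnold relation}), the Arnold class generates $\ker p_{\R^4}^*$ in degree $6$, and by Corollary~\ref{corollary: homology} this kernel is the infinite cyclic summand $\Z[-1]$. The construction of the classifying map in Lemma~\ref{lemma: cohomology classifying} further ensures that $\conf{\R^4}{2}^3 \to K(\Z[-1],6)$ is an isomorphism on the bottom nontrivial homotopy group $\pi_6$, so $\kappa$ is a generator, i.e.\ $\kappa = \pm\,(\text{Arnold class})$. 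Since $\obs{K} = h^*(\kappa)$ by construction (with $h=f_2^3\circ p_K$), we conclude $\obs{K} = \pm\,h^*(\text{Arnold class})$, which is nonzero by hypothesis.

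Second, to rule out the existence of the lift, I would argue directly on cohomology. If a $\Sigma_3$-equivariant homotopy lift $g\co\conf{K}{3}\to\conf{\R^4}{3}$ existed, then $h$ would be equivariantly homotopic to $p_{\R^4}\circ g$, so $h^* = g^*\circ p_{\R^4}^*$. The Arnold class lies in $\ker p_{\R^4}^*$ by Lemma~\ref{lemma: arnold relation}, hence $h^*(\text{Arnold class})=0$, contradicting the hypothesis. Alternatively, in the examples of interest we have $\dim K\leq 2$ and $d=4$, so $3\dim K\leq 2d-2$ and Proposition~\ref{prop: O3} gives completeness of $\obs{K}$, whence $\obs{K}\neq 0$ alone would already suffice.

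The only non-routine step is the identification $\kappa=\pm\,(\text{Arnold class})$; this amounts to tracking the equivariant Postnikov construction in the proof of Lemma~\ref{lemma: cohomology classifying} carefully enough to see that $\kappa$ generates the rank-one subgroup $\ker p_{\R^4}^*\subset H^6_{\Sigma_3}(\conf{\R^4}{2}^3;\Z[-1])$. Since we only care about non-vanishing of $\obs{K}$ up to a unit, the ambiguous sign is inessential to the conclusion.
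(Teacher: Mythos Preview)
Your proposal is correct and matches the paper's reasoning. The paper does not give a separate proof of this lemma (it is stated as an ``easy sufficient condition''), but your two ingredients are exactly the ones the paper uses: the direct cohomological argument that a lift would force $h^*$ to factor through $p_{\R^4}^*$ and hence kill the Arnold class, and the identification of $\obs{K}$ with the pullback of the Arnold class, which the paper records immediately afterward as Lemma~\ref{lem: o3 Arnold} with essentially the same justification you give.
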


One can make the connection between $\obs{K}$ and the Arnold class a little more precise. By definition, $\obs{K}$ is an element in the $\Sigma_3$-equivariant cohomology group $H_{\Sigma_3}^6(\conf{K}{3}; \mathbb{Z}[-1])$. There is a natural homomorphism
\[
H_{\Sigma_3}^6(\conf{K}{3}; \mathbb{Z}[-1])\to H^6(\conf{K}{3}; \mathbb{Z}[-1])^{\Sigma_3}\subset H^6(\conf{K}{3})
\]
\begin{lemma} \label{lem: o3 Arnold}
The image of $\obs{K}$ in $H^6(\conf{K}{3})$ under this homomorphism is (the image of) the Arnold class under the map $f_2^3\circ p_K\colon \conf{K}{3}\to \conf{\R^4}{2}^3$.
\end{lemma}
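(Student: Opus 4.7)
The plan is to identify, non-equivariantly, the cohomology class represented by the classifying map of Lemma~\ref{lemma: cohomology classifying}, and then to deduce the statement by naturality. By construction, $\obs{K}$ is the $\Sigma_3$-equivariant homotopy class of the composite
\[
\conf{K}{3}\xrightarrow{p_K}\conf{K}{2}^{3}\xrightarrow{f_2^{3}}\conf{\R^d}{2}^{3}\xrightarrow{\kappa}K(\Z[(-1)^{d-1}],2d-2),
\]
where $\kappa$ is the classifying map supplied by Lemma~\ref{lemma: cohomology classifying}. The forgetful homomorphism then sends $\obs{K}$ to $(f_2^{3}\circ p_K)^{*}\alpha$, where $\alpha=[\kappa]\in H^{2d-2}(\conf{\R^d}{2}^{3};\Z[(-1)^{d-1}])$ is the underlying non-equivariant class. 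By naturality, it suffices to prove that $\alpha$ equals the Arnold class.

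I would first locate $\alpha$ inside $\ker(p_{\R^d}^{*})$. Since $\kappa\circ p_{\R^d}$ is null-homotopic by Lemma~\ref{lemma: cohomology classifying}, we have $p_{\R^d}^{*}\alpha=0$. By the Arnold relation (Lemma~\ref{lemma: arnold relation}), the kernel of $p_{\R^d}^{*}$ is the ideal generated by the Arnold class, and in degree $2d-2$ this ideal coincides with the infinite cyclic subgroup $\Z\cdot\mathrm{Arn}$, the rank being confirmed by Corollary~\ref{corollary: homology}. Consequently $\alpha=c\cdot\mathrm{Arn}$ for some integer $c$.

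To pin down $c$, I would use the cofiber sequence $\conf{\R^d}{3}\to\conf{\R^d}{2}^{3}\to C$ built in the proof of Lemma~\ref{lemma: cohomology classifying}. Its long exact sequence in cohomology with coefficients $\Z[(-1)^{d-1}]$, together with Corollary~\ref{corollary: homology}, yields an isomorphism $\widetilde H^{2d-2}(C;\Z[(-1)^{d-1}])\xrightarrow{\cong}\ker(p_{\R^d}^{*})$ in degree $2d-2$. Being a $(2d-1)$-equivalence by construction, the map $C\to K(\Z[(-1)^{d-1}],2d-2)$ represents a generator of $\widetilde H^{2d-2}(C;\Z[(-1)^{d-1}])$ via the Hurewicz/Eilenberg--Mac\,Lane identification; pulling it back to $\conf{\R^d}{2}^{3}$ exhibits $\alpha$ as a generator of $\ker(p_{\R^d}^{*})$, so $c=\pm 1$. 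The main obstacle is matching the Hurewicz-type sign of this generator with the sign conventions in Definition~\ref{def: Arnold}; this is a one-time normalization, which we absorb by choosing the generator $u\in H^{d-1}(\conf{\R^d}{2})$ compatibly with the relative Hurewicz isomorphism $\pi_{2d-2}(C)\cong\Z[(-1)^{d-1}]$, giving $c=+1$ and concluding the proof.
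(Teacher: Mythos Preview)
Your proof is correct and follows essentially the same approach as the paper's: both arguments identify the non-equivariant class $\alpha$ represented by the classifying map $\kappa$ as lying in $\ker(p_{\R^d}^{*})$, then show it is a generator of that cyclic kernel (hence the Arnold class up to a sign that can be normalized away). The paper phrases the generator step via the split short exact sequence in $H_{2d-2}$ and $H^{2d-2}$ established in Section~\ref{sec: Constructing the obstruction}, while you unpack this through the cofiber sequence and the $(2d-1)$-connectedness of $C\to K(\Z[(-1)^{d-1}],2d-2)$; these are the same argument at different levels of detail.
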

\begin{proof}
We saw in Section~\ref{sec: Constructing the obstruction} that $\obs{K}$ is represented by a map
\[
\conf{\mathbb R^n}{2}^3 \to K(\mathbb Z, 2n-2)
\]
with the property that the sequence
\[
\conf{\mathbb R^n}{3}\to \conf{\mathbb R^n}{2}^3 \to K(\mathbb Z, 2n-2)
\]
induces a split short exact sequence in $H_{2n-2}$ and in $H^{2n-2}$. It follows that the map $\conf{\mathbb R^n}{2}^3 \to K(\mathbb Z, 2n-2)$ representing our obstruction sends a generator of $H^{2n-2}(K(\mathbb Z, 2n-2))$ to a generator of $\ker(H^{2n-2}(\conf{\mathbb R^n}{2}^3)\to H^{2n-2}(\conf{\mathbb R^n}{3})$, which is precisely the Arnold class (up to sign, which we can adjust).
\end{proof}
Now let us prove the main result of this section.
\begin{proof}[Proof of Proposition~\ref{prop: example}]
The map $f_2^3\circ p_K$ of~\eqref{eq: composition} induces a homomorphism in cohomology
\[
(f_2^3\circ p_K)^*\colon H^6(\conf{\mathbb R^4}{2}\times \conf{\mathbb R^4}{2}\times \conf{\mathbb R^4}{2})\to H^6(\conf{K}{3}). 
\]
By Lemma~\ref{lemma: easy cohomological}, it is enough to show that this homomorphism does not send the element $u\otimes u\otimes 1- u\otimes 1\otimes u+1\otimes u\otimes u$ to zero.

Inside $K$ there are three disjoint subspaces: the spheres $S$ and $S'$, and the torus $\Delta_{123}\times \Delta_{123}'$. Since these subspaces are disjoint, the obvious inclusion $$S\times S'\times (\Delta_{123}\times \Delta_{123}')\hookrightarrow K\times K\times K$$
factors through an inclusion
$$i\colon S\times S'\times (\Delta_{123}\times \Delta_{123}')\hookrightarrow \conf{K}{3}.$$
We will want to give names to elements in the cohomology of $S\times S'\times (\Delta_{123}\times \Delta_{123}')$. For this purpose, let $v, v', \tau_{123},$ and $\tau_{123}'$ be generators of $H^2(S), H^2(S'), H^{1}(\Delta_{123}), H^1(\Delta_{123}')$ respectively.

Consider the composition
\begin{multline}\label{eq: detect}
S\times S'\times (\Delta_{123}\times \Delta_{123}')\xrightarrow{i} \conf{K}{3}\xrightarrow{p_K}\\ \to \conf{K}{2}\times \conf{K}{2}\times \conf{K}{2}\xrightarrow{f_2^3}  \conf{\mathbb R^4}{2}\times \conf{\mathbb R^4}{2}\times \conf{\mathbb R^4}{2}.
\end{multline}
We want to analyze the effect of this map on cohomology. Recall that for any space $X$, the map $p_X\colon \conf{X}{3}\to \conf{X}{2}^3$ is defined by the formula
\[
p_X(x_1, x_2, x_2)=((x_1, x_2), (x_2, x_3), (x_3, x_1)).
\]
For $i=1, 2, 3$, let $pr_X^i\colon \conf{X}{2}^3\to \conf{X}{2}$ be the projection onto the $i$-th factor. Notice that $pr_X^i\circ p_X(x_1, x_2, x_3)=(x_i, x_{i+1})$, where $i+1$ is computed cyclically. We have the following commutative diagram:
\[ \begin{tikzcd}
S\times S' \times (\Delta_{123}\times \Delta'_{123})\arrow[r, "p_K\circ i"]\arrow[d]  & \conf{K}{2}^3 \arrow[r, "f_2^3"]\arrow[d, "pr_K^1"] & \conf{R^4}{2}^3\arrow[d, "pr_{\R^4}^1"]  \\
S\times S' \arrow[r] & \conf{K}{2} \arrow[r, "f_2"] & \conf{\R^4}{2}
\end{tikzcd}
\]
The composition of the two maps at the bottom is a map $S\times S'\to \conf{\mathbb R^4}{2}$. This map is zero on reduced cohomology for the obvious reason that the target only has non-trivial cohomology in degree $3$ and the source has trivial cohomology in degree $3$. Notice that the right vertical map $pr_{\R^4}^1$ takes the cohomology generator $u$ to $u\otimes 1\otimes 1$. 
It follows that the composition of the two horizontal map at the top of the diagram takes the classes $u\otimes u \otimes 1$ and $u\otimes 1 \otimes u$ to zero.

It remains to see what happens to the third summand of the Arnold class, namely $1\otimes u\otimes u$. For this purpose consider the following diagram
\[ \begin{tikzcd}
S\times S' \times (\Delta_{123}\times \Delta'_{123})\arrow[r, "p_K\circ i"]\arrow[d]  & \conf{K}{2}^3 \arrow[r, "f_2^3"]\arrow[d, "pr_K^3"] & \conf{R^4}{2}^3\arrow[d, "pr_{\R^4}^3"]  \\
S\times (\Delta_{123}\times \Delta'_{123}) \arrow[r] & \conf{K}{2} \arrow[r, "f_2"] & \conf{\R^4}{2}
\end{tikzcd}
\]

It follows from Proposition~\ref{prop: link} that the composition of maps at the bottom of this diagram sends the generator $u$ of $H^{3}(\conf{\mathbb R^4}{2}$ to an odd multiple of $v\otimes \tau_{123}\otimes 1$. It follows that the composition of the maps at the top sends $1\otimes 1\otimes u$ to an odd multiple of $v\otimes 1\otimes \tau \otimes 1$. A similar diagram shows that the composition of top maps sends $1\otimes u \otimes 1$ to an odd multiple of $1\otimes v'\otimes 1 \otimes \tau'$. 

Putting all these calculations together, it follows that the map $f_2^3\circ p_K \circ i$ sends the Arnold class to an odd multiple of $v\otimes v'\otimes \tau \otimes \tau'$, which is not zero. It follows that $f_2^3\circ p_K$ does not send the Arnold class to zero. This is what we wanted to prove.
\end{proof}

\begin{remark} \label{rem: examples in other dim}
In the discussion above we focused on the case of $2$-complexes in $\R^4$, but a similar calculation 
shows that the obstruction $\obs{K}$ detects non-embeddability of examples (with vanishing obstruction $\vk{K}$) in all dimensions outside the metastable range, $2d<3(m+1)$, such that 
$d \geq {\rm max}(4, m)$. Such examples of $m$-dimensional complexes were constructed and shown to not admit an embedding in $\R^d$ in \cite{SS, SSS}. 
The construction involves the Whitehead product of meridional spheres $S^l, l=d-m-1$, linking two $m$-spheres $S$, $S'$, rather than the commutator of loops in the construction above. 
To be precise, this remark concerns the version of the examples from \cite{SS, SSS} which is a direct analogue of the example from \cite{FKT} and illustrated in figure \ref{fig:Example}. That is, the wedge sum is taken so that the spheres $S$, $S'$ do not have vertices in common. We discuss the examples of \cite{SS, SSS} again in
Section \ref{sec: convergence}.

There are three disjoint subspaces in the complex: the spheres $S, S'$, and a $2l$-torus, and the calculation of the Arnold class analogous to the above shows that it is non-trivial. This gives a unified proof of non-embeddability of the examples in \cite{FKT} and in \cite{SS, SSS}, while the arguments in these original references are quite different, both from each other and from the new perspective in this paper.
\end{remark}

\section{Higher obstructions} \label{the tower}
In this section we show how the obstruction $\vk{K}$ and $\obs{K}$ can be extended to a sequence of obstructions $\obsn{K}$, using a primitive version of the Goodwillie-Weiss tower. We will then give a conjectural description of a framed cobordism refinement of $\obsn{K}$.
\begin{definition}
Let $\Fin$ be the category of finite sets and injective functions between them, and $\Fin_n\subset \Fin$ be the full subcategory consisting of sets of cardinality at most $n$.  
\end{definition}

As before, let $\emb(K, \R^d)$ denote the space of topological embeddings of $K$ into $\R^d$. In the case when $i$ is a finite set and $X$ is any space, $\emb(i, X)$  is the configuration space of ordered $i$-tuples of pairwise distinct points of $X$. We also denote this space by $\conf{X}{i}:=\emb(i, X)$.

Given a small category $\mathcal C$ and functors $F, G\colon \mathcal C\to \Top$, we let $\nat_{\mathcal C}(F, G)$ denote the space of natural transformations from $F$ to $G$, and let $\hnat_{\mathcal C}(F, G)$ denote the space of derived natural transformations from $F$ to $G$. In other words, $\hnat_{\mathcal C}(F, G)$ is the space of natural transformations from a cofibrant replacement of $F$ to a fibrant replacement of $G$. The (co)fibrant replacements can be taken in any Quillen model structure on the functor category $[\mathcal C, \Top]$, where the weak equivalences are defined levelwise. We will use the projective model structure, in which every functor is fibrant. 

\begin{remark}
To save notation, if $F, G$ are functors $\mathcal C^{\op} \to \Top$, we will use the notation $\hnat_{\mathcal C}(F, G)$ rather than $\hnat_{\mathcal C^{\op}}(F, G)$.
\end{remark}

A topological space $K$ determines a functor $\conf{K}{-}\colon \Fin^{\op} \to \Top$ that sends a set $i$ to $\conf{K}{i}=\emb(i, K)$. A topological embedding $f\colon K\hookrightarrow \R^d$ gives rise to a natural transformation $\conf{K}{-}\to \conf{\R^d}{-}$, which sends an embedding $\alpha\colon i\hookrightarrow K$ to the embedding $f\circ \alpha\colon i\hookrightarrow \R^d$. This gives rise to natural maps.
\begin{equation}\label{eq: main map}
\emb(K, \R^d) \to \nat_{\Fin}(\conf{K}{-}, \conf{\R^d}{-}) \to  \hnat_{\Fin}(\conf{K}{-}, \conf{\R^d}{-}) .
\end{equation}
One useful feature of the space $\hnat_{\Fin}(\conf{K}{-}, \conf{\R^d}{-})$ is that it admits a natural tower of approximations. 
\begin{definition}\label{def: primitive}
For each $n\ge 1$ define
\begin{equation} \label{eq:tower}
T_n\emb(K, \R^n)=\hnat_{\Fin_n}(\conf{K}{-}, \conf{\R^d}{-})
\end{equation}
\end{definition}
The inclusions of categories $\cdots\Fin_{n-1}\subset \Fin_n\subset \cdots \subset \Fin$ give rise to a tower whose homotopy inverse limit is equivalent to $\hnat_{\Fin}(\conf{K}{-}, \conf{\R^d}{-})$
\[
\hnat_{\Fin}(\conf{K}{-}, \conf{\R^d}{-})\to \cdots \to T_n\emb(K, \R^n) \to T_{n-1}\emb(K, \R^d)\to \cdots
\]
\begin{remark}
Readers familiar with the embedding calculus of Goodwillie and Weiss will readily recognize $T_n\emb(K, \R^n)$ as a primitive analogue of the $n$-the Taylor approximation in the Goodwillie tower. Indeed, the Goodwillie-Weiss construction is essentially the same as the one in Definition~\ref{def: primitive}, except that instead of the category $\Fin_n$ of sets with at most $n$ elements, they use the category whose objects are manifolds diffeomorphic to the disjoint union of at most $n$ copies of $\R^m$, and whose morphisms are smooth embeddings. At least this is one way to construct the Goodwillie-Weiss tower. For more information about this approach to the Goodwillie-Weiss calculus see the paper of Boavida and Weiss~\cite{BW}.
\end{remark}
The following lemma is an immediate consequence of the existence of the map~\eqref{eq: main map}.
\begin{lemma}
If $\hnat_{\Fin_n}(\conf{K}{-}, \conf{\R^d}{-})$ is empty for some $n$, then there does not exists an embedding of $K$ into $\R^d$.
\end{lemma}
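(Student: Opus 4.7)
The argument is essentially the contrapositive of the existence of the map~\eqref{eq: main map}, restricted to $\Fin_n\subset \Fin$. The plan is as follows.

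First, I would argue that an embedding $f\colon K \hookrightarrow \R^d$ determines a strict natural transformation $\eta_f\colon \conf{K}{-} \to \conf{\R^d}{-}$ of functors $\Fin^{\op}\to \Top$: given a finite set $i$ and a point $\alpha\in\conf{K}{i}=\emb(i,K)$, set $\eta_f(\alpha)= f\circ \alpha \in \emb(i,\R^d)=\conf{\R^d}{i}$. Injectivity of $f$ guarantees that $f\circ \alpha$ is again an embedding, and naturality with respect to morphisms $i\hookrightarrow j$ in $\Fin$ is immediate because precomposition commutes with postcomposition. Restricting to the full subcategory $\Fin_n\subset \Fin$, we obtain an element of $\nat_{\Fin_n}(\conf{K}{-},\conf{\R^d}{-})$.

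Next, I would push this element through the canonical map
\[
\nat_{\Fin_n}(\conf{K}{-}, \conf{\R^d}{-}) \longrightarrow \hnat_{\Fin_n}(\conf{K}{-}, \conf{\R^d}{-}) = T_n\emb(K,\R^d),
\]
which exists because $\hnat_{\Fin_n}$ is computed by replacing $\conf{K}{-}$ by a cofibrant model (while $\conf{\R^d}{-}$ is already fibrant in the projective model structure on $[\Fin_n^{\op},\Top]$), and any strict natural transformation precomposes with the cofibrant replacement to give a derived one. Putting these two steps together yields a map $\emb(K,\R^d)\to T_n\emb(K,\R^d)$ sending $f$ to an actual point of $T_n\emb(K,\R^d)$.

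Hence if $\emb(K,\R^d)$ is non-empty, so is $T_n\emb(K,\R^d)=\hnat_{\Fin_n}(\conf{K}{-},\conf{\R^d}{-})$. Contrapositively, if the latter is empty for some $n$, then $K$ cannot be embedded in $\R^d$. The only point requiring a bit of care is the passage from strict to derived natural transformations, but in the projective model structure on $[\Fin_n^{\op},\Top]$ this is standard and there is no obstacle; there is no genuinely hard step in this proof.
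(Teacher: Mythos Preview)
Your proof is correct and follows exactly the approach indicated in the paper: the lemma is stated there as an immediate consequence of the existence of the map~\eqref{eq: main map}, and you have simply unpacked why that map exists (an embedding gives a strict natural transformation, which passes to the derived mapping space via cofibrant replacement in the projective model structure) and then taken the contrapositive.
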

Our goal is to study obstructions for a path component of $T_{n-1}\emb(K, \R^d)$ to be in the image of $T_{n}\emb(K, \R^d)$. For this purpose it is useful to have an inductive description of $T_{n}\emb(K, \R^d)$. Such a description is given by Proposition~\ref{proposition: inductive} below. The proposition is elementary and no doubt well-known. But for completeness we will give a proof. We need some preparation.
\begin{definition}
Let 
\[
\subconf{\R^d}{n}=\underset{S\subsetneq \{1,\ldots, n\}}{\holim}\conf{\R^d}{S}
\]
\end{definition}
In words, $\subconf{\R^d}{n}$ is the homotopy limit of all the ordered configuration spaces of proper subsets of $\{1, \ldots, n\}$ into $\R^d$. 
\begin{remark}
 It is worth noting that $\subconf{\R^d}{3}\simeq \conf{\R^d}{2}^{3}$-a space that we encountered in sections~\ref{sec: Constructing the obstruction} and~\ref{sec: construction}. Everything we are doing in this section is a generalization of what we did in those two sections for $n=2, 3$.
\end{remark}
There is another, equivalent, construction of the space $\subconf{\R^d}{n}$ that will come up. Let $\Fin_{n-1}\downarrow n$ be the category whose objects are injective maps of sets $i\hookrightarrow n$, where $n$ is shorthand for $\{1, \ldots, n\}$ and $i\in \Fin_{n-1}$ denotes a set with strictly fewer elements than $n$. Morphisms in $\Fin_{n-1}\downarrow n$ are commuting triangles. There is a functor from $\Fin_{n-1}\downarrow n$ to the category (poset) of proper subsets of $\{1, \ldots, n\}$ which sends an injective map $i\hookrightarrow n$ to its image. This functor is easily seen to be faithful, full and surjective, so it is an equivalence of categories. Therefore it induces an equivalence
\begin{equation}\label{eq: extensions}
\underset{S\subsetneq \{1, \ldots, n\}}{\holim} \conf{\R^d}{S}
\xrightarrow{\simeq} \underset{i\hookrightarrow n\in \Fin_{n-1}\downarrow n}{\holim} \conf{\R^d}{i}.
\end{equation}
Another notion that we will use in the proof of Proposition~\ref{proposition: inductive} is that of a homotopy right Kan extension. Let us quickly review what this is. Suppose $\mathcal C$ is a category and $\mathcal C_0$ is a subcategory. Next, suppose $G\colon \mathcal C_0\to \Top$ is a functor defined on a subcategory of $\mathcal C$. Then let $RG\colon \mathcal C \to \Top$ denote the homotopy right Kan extension of $G$ from $\mathcal C_0$ to $\mathcal C$. Recall that $RG$ can be defined on the objects of $\mathcal C$ by the following formula
\[
RG(x)=\underset{x\to z\in x\downarrow \mathcal C_0}{\holim} G(z).
\]
The homotopy right Kan extension is a derived right adjoint to the restriction functor. This means that for any functor $F\colon \mathcal C \to \Top$ there is a natural equivalence
\begin{equation}\label{eq: adjunction}
\hnat_{\mathcal C}(F, RG)\simeq \hnat_{\mathcal C_0}(F|_{\mathcal C_0}, G)
\end{equation}
The adjunction also means that there is a natural transformation of functors $F\to RF|_{\mathcal C_0}$. If $\mathcal C_0$ is a full subcategory of $\mathcal C$ then this natural transformation is an equivalence when evaluated on objects of $\mathcal C_0$.

Now we are ready to state and prove the inductive description of $T_n\emb(K, \R^d)$.
\begin{proposition}\label{proposition: inductive}
There is a homotopy pullback square, where the right vertical map is induced by the canonical map $\conf{\R^d}{n}\to \subconf{\R^d}{n}$
\[
\begin{array}{ccc}
T_n\emb(K, \R^d) & \to &  \map(\conf{K}{n}, \conf{\R^d}{n})^{\Sigma_n}\\
\downarrow & & \downarrow \\
T_{n-1}\emb(K, \R^d) & \to & \map(\conf{K}{n}, \subconf{\R^d}{n})^{\Sigma_n}
\end{array}
\]
\end{proposition}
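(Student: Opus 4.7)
The plan is to realize the claimed square as the one obtained by applying $\hnat_{\Fin_n}(\conf{K}{-},-)$ to the unit of the homotopy right Kan extension along the fully faithful inclusion $j^{\op}\colon\Fin_{n-1}^{\op}\hookrightarrow\Fin_n^{\op}$. For a functor $G\colon\Fin_n^{\op}\to\Top$, let $RG$ denote the homotopy right Kan extension of $G\circ j^{\op}$ back along $j^{\op}$. Since $j^{\op}$ is fully faithful, the unit $\eta\colon G\to RG$ is a levelwise weak equivalence on objects of $\Fin_{n-1}^{\op}$. At the remaining object $n$, the pointwise formula for the right Kan extension, combined with the equivalence~\eqref{eq: extensions}, identifies
\[
(RG)(n)\;\simeq\;\underset{(i\hookrightarrow n)\in\,\Fin_{n-1}\downarrow n}{\holim}\, G(i),
\]
and for $G=\conf{\R^d}{-}$ this is precisely $\subconf{\R^d}{n}$, with $\eta_n$ the canonical map $\conf{\R^d}{n}\to\subconf{\R^d}{n}$.

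Applying $\hnat_{\Fin_n}(\conf{K}{-},-)$ to $\eta$ together with evaluation at the object $n$ yields a commutative square
\[
\begin{tikzcd}[column sep=large]
\hnat_{\Fin_n}(\conf{K}{-},\conf{\R^d}{-})\arrow[r,"\mathrm{ev}_n"]\arrow[d] & \map(\conf{K}{n},\conf{\R^d}{n})^{\Sigma_n}\arrow[d]\\
\hnat_{\Fin_n}(\conf{K}{-},RG)\arrow[r,"\mathrm{ev}_n"] & \map(\conf{K}{n},\subconf{\R^d}{n})^{\Sigma_n}
\end{tikzcd}
\]
(taking $G=\conf{\R^d}{-}$). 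The homotopy Kan extension adjunction~\eqref{eq: adjunction} identifies the bottom-left term with $T_{n-1}\emb(K,\R^d)$, compatibly with the canonical restriction map $T_n\emb(K,\R^d)\to T_{n-1}\emb(K,\R^d)$ induced by $j$. Under this identification the square becomes the one in the statement of the proposition.

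The main step, and the one I expect to be the main obstacle, is verifying that this square is a homotopy pullback. For this, one uses that $\Fin_{n-1}^{\op}\subset\Fin_n^{\op}$ is a \emph{cosieve}: any morphism in $\Fin_n^{\op}$ with source in $\Fin_{n-1}^{\op}$ has target in $\Fin_{n-1}^{\op}$. This cosieve property stratifies $\Fin_n^{\op}$ into $\Fin_{n-1}^{\op}$ and the $\Sigma_n$-orbit of the object $n$, and implies that a derived natural transformation on $\Fin_n^{\op}$ is equivalent data to (i) a derived natural transformation on $\Fin_{n-1}^{\op}$, (ii) a $\Sigma_n$-equivariant map $\conf{K}{n}\to\conf{\R^d}{n}$, and (iii) a coherent matching telling us that the two induced maps $\conf{K}{n}\rightrightarrows\subconf{\R^d}{n}$ agree. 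This is exactly the data parametrized by the homotopy pullback. Equivalently, the unit $\eta$ is constructed so that $\conf{\R^d}{-}$ becomes, levelwise over $\Fin_n^{\op}$, the homotopy pullback of the square relating it to the extension by $\subconf{\R^d}{n}$ at the object $n$; since $\hnat_{\Fin_n}(\conf{K}{-},-)$ commutes with homotopy pullbacks in the target, the conclusion follows. In practice this cosieve decomposition is a direct homotopy-limit computation splitting the indexing over the two strata, and could equivalently be handled via the matching-object formalism for the cardinality filtration on $\Fin_n$.
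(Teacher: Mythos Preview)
Your proof is correct and follows essentially the same approach as the paper: both use the homotopy right Kan extension along $\Fin_{n-1}^{\op}\hookrightarrow\Fin_n^{\op}$, identify its value at $n$ with $\subconf{\R^d}{n}$, apply $\hnat_{\Fin_n}(\conf{K}{-},-)$, and invoke the Kan adjunction~\eqref{eq: adjunction}. The only presentational difference is that the paper packages ``evaluation at $n$'' as a second right Kan extension $R_\Sigma^{\Fin}$ from the one-object subcategory $\Sigma_n\subset\Fin_n$, which lets it write down an explicit levelwise homotopy pullback square of functors~\eqref{diagram: coskeletal} and verify the pullback property object-by-object (at $i<n$ the vertical maps are equivalences, at $n$ the horizontal maps are); your cosieve/stratification argument and your alternative levelwise remark amount to the same verification, just stated less explicitly.
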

\begin{proof}
Since $T_n\emb(K, \R^d)=\hnat_{\Fin_n}(\conf{K}{-}, \conf{\R^d}{-})$, our task is to prove that there exists a homotopy pullback diagram of the following form
\begin{equation}\label{diagram: desired}
\begin{tikzcd}
\hnat_{\Fin_n}(\conf{K}{-}, \conf{\R^d}{-}) \ar{r}\ar{d}&  \map(\conf{K}{n}, \conf{\R^d}{n})^{\Sigma_n}\ar{d}\\
\hnat_{\Fin_{n-1}}(\conf{K}{-}, \conf{\R^d}{-}) \ar{r} & \map(\conf{K}{n}, \subconf{\R^d}{n})^{\Sigma_n}
\end{tikzcd}
\end{equation}
The strategy is to express all four corners of this square as spaces of homotopy natural transformations between functors defined on $\Fin_n$ using homotopy right Kan extension.

Let $R_{n-1}^n\conf{\R^d}{-}$ be the homotopy right Kan extension of the functor $\conf{\R^d}{-}$ from $\Fin_{n-1}$ to $\Fin_n$. By~\eqref{eq: adjunction} we know that restriction from $\Fin_n$ to $\Fin_{n-1}$ induces an equivalence
\[
\hnat_{\Fin_n}(\conf{K}{-}, R_{n-1}^n\conf{\R^d}{-})\xrightarrow{\simeq} \hnat_{\Fin_{n-1}}(\conf{K}{-}, \conf{\R^d}{-}).
\]
Now let us analyse the functor $R_{n-1}^n\conf{\R^d}{-}$. There is a natural transformation of (contravariant) functors on $\Fin_n$
\[
\conf{\R^d}{-}\to R_{n-1}^n\conf{\R^d}{-}.
\]
This natural transformation is an equivalence when evaluated on objects of $\Fin_{n-1}$ because $\Fin_{n-1}$ is a full subcategory of $\Fin_n$. On the other hand, we have the following formula for $R_{n-1}^n\conf{\R^d}{n}$ 

\[
R_{n-1}^n\conf{\R^d}{n}\simeq \underset{i\hookrightarrow n\in \Fin_{n-1}\downarrow n}{\holim} \conf{\R^d}{i}
\]
By~\eqref{eq: extensions} we have an equivalence
\[\subconf{\R^d}{n}=\underset{S\subsetneq \{1, \ldots, n\}}{\holim} \conf{\R^d}{S}
\xrightarrow{\simeq} \underset{i\hookrightarrow n\in \Fin_{n-1}\downarrow n}{\holim} \conf{\R^d}{i} .
\]
Therefore there is an equivalence 
\[
R_{n-1}^n\conf{\R^d}{n}\simeq \subconf{\R^d}{n}.
\]
And the map $\conf{\R^d}{n}\to R_{n-1}^n\conf{\R^d}{n}$ is equivalent to the natural map $\conf{\R^d}{n}\to \subconf{\R^d}{n}$.
Now consider the full subcategory of $\Fin_n$ consisting of the single object $n$ and its endomorphisms. This category is the symmetric group, and we will denote it by $\Sigma_n$. A functor from $\Sigma_n$ to $\Top$ is the same thing as a space with an action of $\Sigma_n$. Given a space $X_n$ with an action of $\Sigma_n$, we let $R_{\Sigma}^{\Fin}X_n(-)$ denote the homotopy right Kan extension of this functor from $\Sigma_n$ to $\Fin_n$. Since there are no morphisms in $\Fin_n$ from $n$ to smaller sets, it follows that $R_{\Sigma}^{\Fin} X_n(n)=X_n$ and $R_{\Sigma}^{\Fin}X_n(i)\simeq *$ for $i<n$. 

From the discussion above we conclude that there is a homotopy pullback square of functors from $\Fin_n$ to $\Top$
\begin{equation}\label{diagram: coskeletal}
\begin{tikzcd}
\conf{\R^d}{-} \arrow{r}\arrow{d} & R_{\Sigma}^{\Fin}\conf{\R^d}{n}(-) \arrow{d}\\
R_{n-1}^n\conf{\R^d}{-} \arrow{r} & R_{\Sigma}^{\Fin}\subconf{\R^d}{n}(-)
\end{tikzcd}
\end{equation}
Indeed, when evaluated at a set $i<n$, the vertical morphisms in this square are equivalences, and when evaluated at $n$, the horizontal morphisms are equivalences. So it is a homotopy pullback square of functors. 

Applying $\hnat_{\Fin_n}(\conf{K}{-}, -)$ to~\eqref{diagram: coskeletal}, we obtain a homotopy pullback square
\[
\begin{tikzcd}
\hnat_{\Fin_n}(\conf{K}{-},\conf{\R^d}{-}) \ar{r}\ar{d} & \hnat_{\Fin_n}(\conf{K}{-}, R_{\Sigma}^{\Fin}\conf{\R^d}{n}(-))\ar{d} \\
\hnat_{\Fin_n}(\conf{K}{-}, R_{n-1}^n\conf{\R^d}{-}) \ar{r} & \hnat_{\Fin_n}(\conf{K}{-}, R_{\Sigma}^{\Fin}\subconf{\R^d}{n}(-))
\end{tikzcd}
\]
Using the fact that right Kan extension is derived right adjoint to restriction we obtain that this square is equivalent to the desired square~\eqref{diagram: desired} at the beginning of the proof. So we have proved that a homotopy pullback square of this form exists.
\end{proof}
\begin{remark}
One can interpret the homotopy pullback square~\eqref{diagram: coskeletal} as an inductive description of the coskeletal filtration on a functor defined on a (generalized) Reedy category. See~\cite[Section 6]{BeMo}.
\end{remark}
Proposition~\ref{proposition: inductive} leads to an inductive procedure for constructing obstructions to the existence of an embedding $K\hookrightarrow \R^d$. Suppose we have a point $g_{n-1}\colon \hnat_{\Fin_{n-1}}(\conf{K}{-}, \conf{\R^d}{-})$, and we want to know if (the path component of) $g_{n-1}$ lies in the image of $$\hnat_{\Fin_{n}}(\conf{K}{-}, \conf{\R^d}{-}).$$ The bottom map in diagram~\eqref{diagram: desired} sends $g_{n-1}$ 
to a $\Sigma_n$-equivariant map $$\tilde f_n\colon\conf{K}{n}\to \subconf{\R^d}{n},$$ which really factors as a composite 
\[
\conf{K}{n}\to \subconf{K}{n}\to \subconf{\R^d}{n}.
\]
The path component of $g_{n-1}$ is in the image of a path component of 
$\hnat_{\Fin_{n}}(\conf{K}{-}, \conf{\R^d}{-})$ if and only if $\tilde f_n$ lifts up to homotopy to a $\Sigma_n$-equivariant map $f_n\colon \conf{K}{n}\to \conf{\R^d}{n}$, as per the following diagram
\begin{equation} \label{eq: C C0}
\begin{tikzcd}
 & \conf{\R^d}{n} \arrow{d}  \\
\conf{K}{n} \arrow{r}{\tilde f_n} \arrow[ru, dashed , "f_n", shift left =1]& \subconf{\R^d}{n}
\end{tikzcd}
\end{equation}
At this point obstruction theory kicks in. We will assume that $d\ge 3$, so that the spaces $\conf{\R^d}{n}$ and $\subconf{\R^d}{n}$ are simply connected. The first obstruction to the existence of a lift $f_n$ lies in the equivariant cohomology of $\conf{K}{n}$ with coefficients in the first non-trivial homotopy group of the homotopy fiber of the map $\conf{\R^d}{n}\to \subconf{\R^d}{n}$. The following proposition is known~\cite{GW, Kosanovic}.
\begin{proposition}
The map $\conf{\R^d}{n}\to \subconf{\R^d}{n}$ is $(d-2)(n-1)+1$-connected. Let $F$ be the homotopy fiber of this map. The first non-trivial homotopy group of $F$ is
\[
\pi_{(d-2)(n-1)+1}(F)\cong \Z^{(n-2)!}.
\]
\end{proposition}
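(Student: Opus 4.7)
The plan is to identify $F$ with the total homotopy fiber of the $n$-cube of configuration spaces $\mathcal{Y}\colon T\mapsto\conf{\R^d}{\{1,\ldots,n\}\setminus T}$ indexed by $T\subseteq\{1,\ldots,n\}$, with face maps the ``forget a point'' projections. After re-indexing subsets by complementation, one checks $\holim_{T\neq\emptyset}\mathcal{Y}(T)\simeq\subconf{\R^d}{n}$, so the total homotopy fiber of $\mathcal{Y}$ coincides with our $F$. This reformulation brings standard cubical machinery---in particular Goodwillie's higher Blakers--Massey theorem~\cite{G}---to bear on the problem.

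For the connectivity bound I would proceed by induction on $n$. The base case $n=2$ is immediate: $\subconf{\R^d}{2}$ is the homotopy limit of a diagram of contractible spaces, so $F\simeq\conf{\R^d}{2}\simeq S^{d-1}$, with $\pi_{d-1}\cong\Z$, matching $(d-2)(n-1)+1=d-1$ and $(n-2)!=1$. For the inductive step, partition $\mathcal{Y}$ into two $(n-1)$-cubes according to whether $1\in T$; the total fiber of $\mathcal{Y}$ then sits in a fiber sequence of the total fibers of these two sub-cubes. The ``$1\in T$'' sub-cube is just the $(n-1)$-version of our construction, with total fiber $(d-2)(n-2)$-connected by induction. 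The ``$1\notin T$'' sub-cube, after pinning the position of the first point via a Fadell--Neuwirth fibration, becomes an analogous cube for $n-1$ points in $\R^d\setminus\{\ast\}\simeq S^{d-1}$; the $(d-2)$-connectivity of the punctured ambient space contributes an extra connectivity factor to its total fiber. Splicing the two via Blakers--Massey yields connectivity exactly $(d-2)(n-1)+1$.

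For the identification of $\pi_{(d-2)(n-1)+1}(F)$, the same inductive splitting determines the first nontrivial homotopy group from those of the two $(n-1)$-sub-fibers, giving a recursion on ranks whose solution, from the $n=2$ base, is $(n-2)!$. A more conceptual identification---which also recovers the $\Sigma_n$-action---goes through the interpretation of this first nontrivial layer as (a suitable shift of) the space of trivalent trees on $n$ labeled leaves modulo the AS and IHX relations; this is exactly the obstruction group $\mathcal{T}_{n-2}$ of Section~\ref{sec: Whitney towers}, whose rank is $(n-2)!$. The main obstacle is upgrading the bare connectivity bound to this precise equivariant computation: the direct inductive recursion detects the rank but obscures the Lie/tree structure, whereas the tree-space interpretation requires the full apparatus of the Goodwillie--Weiss tower and its layers, as in \cite{Kosanovic}.
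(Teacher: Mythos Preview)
The paper does not supply a proof of this proposition; it simply cites \cite{GW, Kosanovic}. So your proposal is being compared against the standard argument in the literature rather than anything in this paper.

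Your overall framework is correct: identifying $F$ with the total homotopy fiber of the $n$-cube $T\mapsto\conf{\R^d}{\{1,\ldots,n\}\setminus T}$ and invoking cubical Blakers--Massey is exactly the right setup. The base case is fine. The gap is in the inductive step. You assert that the total fiber of the ``$1\notin T$'' sub-cube, which you correctly identify with $F_{n-1}(\R^d\setminus\{\ast\})$, acquires ``an extra connectivity factor'' from the $(d-2)$-connectivity of the punctured ambient space. This is false. Already for $n=3$ one computes $F_2(\R^d\setminus\{\ast\})\simeq\operatorname{hofib}\bigl(S^{d-1}\vee S^{d-1}\to S^{d-1}\bigr)$, which has $\pi_{d-1}\cong\Z$, the \emph{same} bottom degree as $F_2(\R^d)\simeq S^{d-1}$. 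The extra connectivity of $F_n$ does not come from either sub-total-fiber being highly connected; it comes from the \emph{map} between them being highly connected, and establishing that is tantamount to the original claim. So the induction as written does not close.

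The standard repair is to take one more Fadell--Neuwirth step before inducting. The total fiber of your $n$-cube equals the total fiber of the $(n-1)$-cube of vertical Fadell--Neuwirth fibers, namely $T'\mapsto\R^d\setminus\{x_j:j\in\{2,\ldots,n\}\setminus T'\}$. This is an intersection cube of open subsets $U_j=\R^d\setminus\{x_j\}$; every $2$-face is a genuine pushout of open sets, so the cube is strongly cocartesian, and each initial map (filling one puncture back in) has cofiber $S^d$ and is therefore $(d-1)$-connected. Now Goodwillie's higher Blakers--Massey \cite{G} applies directly to give that the cube is $\bigl(1-(n-1)+(n-1)(d-1)\bigr)=(d-2)(n-1)+1$-cartesian, which is the desired connectivity. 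The identification of $\pi_{(d-2)(n-1)+1}(F)$ with $\Z^{(n-2)!}$ then follows from the Hilton--Milnor description of the bottom homotopy of the total fiber of this wedge-of-spheres cube (iterated Whitehead products indexed by binary trees), which is the tree computation you allude to; your reference to \cite{Kosanovic} for the equivariant refinement is appropriate.
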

Since the space $F$ is simply-connected, the action of $\Sigma_n$ on these spaces induces a well-defined action on the first non-trivial homotopy group of $F$. Thus the group $\Z^{(n-2)!}$ is a representation of $\Sigma_n$. Standard obstruction theory implies the following result.
\begin{theorem} \label{thm: cohom obs}
Suppose, as above, that we have a point $g_{n-1}\colon \hnat_{\Fin_{n-1}}(\conf{K}{-}, \conf{\R^d}{-})$, and we want to know if (the path component of) $g_{n-1}$ lies in the image of $$\hnat_{\Fin_{n}}(\conf{K}{-}, \conf{\R^d}{-}).$$
There is a cohomological obstruction $\obsnfull{K}{g_{n-1}}$ to the existence of such a lift. The class $\obsnfull{K}{g_{n-1}}$ is an element of the equivariant cohomology group.
\[
H^{(d-2)(n-1)+2}_{\Sigma_n}\left(\conf{K}{n}, \Z^{(n-2)!}\right).
\]
We will write simply $\obsn{K}$ rather than $\obsnfull{K}{g_{n-1}}$, when $g_{n-1}$ is irrelevant. If $\dim(K)\cdot n= (d-2)(n-1)+2$ then $\obsn{K}$ is a complete obstruction to the existence of a lift $f_n$. In particular, this holds when $\dim(K)=2$ and $d=4$.
\end{theorem}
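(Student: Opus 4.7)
The plan is to generalize the construction of Section~\ref{sec: Constructing the obstruction} (where the cases $n=2$ and $n=3$ were worked out in detail) by combining Proposition~\ref{proposition: inductive} with $\Sigma_n$-equivariant Postnikov obstruction theory. First, by Proposition~\ref{proposition: inductive}, the path component of $g_{n-1}$ lies in the image of $T_n\emb(K,\R^d)$ if and only if the induced $\Sigma_n$-equivariant map $\tilde f_n\colon\conf{K}{n}\to\subconf{\R^d}{n}$ admits a $\Sigma_n$-equivariant homotopy lift along $p\colon\conf{\R^d}{n}\to\subconf{\R^d}{n}$, exactly as in diagram~\eqref{eq: C C0}.

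Next I would construct a $\Sigma_n$-equivariant classifying map for the principal obstruction to this lift, modelled on the proof of Lemma~\ref{lemma: cohomology classifying}. Let $F$ denote the homotopy fiber of $p$. By the proposition stated just before Theorem~\ref{thm: cohom obs}, $F$ is $(d-2)(n-1)$-connected with $\pi_{(d-2)(n-1)+1}(F)\cong\Z^{(n-2)!}$ as a $\Sigma_n$-module (the action is well-defined since $d\ge 3$ forces all spaces in sight to be simply connected). The connectivities of $p$ and of $\conf{\R^d}{n}$, together with the Blakers--Massey theorem, force the homotopy cofibre $C$ of $p$ to sit in a highly cartesian square, and the equivariant Postnikov theory of~\cite[Chapter II.1]{May-Alaska} then produces a $\Sigma_n$-equivariant model of $K(\Z^{(n-2)!},(d-2)(n-1)+2)$ together with a $\Sigma_n$-equivariant map $C\to K(\Z^{(n-2)!},(d-2)(n-1)+2)$ that is a $\pi_k$-isomorphism for $k\le (d-2)(n-1)+2$. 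Splicing, one obtains a $((d-2)(n-1)+2)$-cartesian $\Sigma_n$-equivariant square
\[
\begin{tikzcd}
\conf{\R^d}{n} \arrow[r,"p"] \arrow{d} & \subconf{\R^d}{n} \arrow{d} \\
* \arrow{r}& K(\Z^{(n-2)!},(d-2)(n-1)+2)
\end{tikzcd}
\]
and $\obsnfull{K}{g_{n-1}}$ is defined to be the class in $H^{(d-2)(n-1)+2}_{\Sigma_n}(\conf{K}{n};\Z^{(n-2)!})$ represented by the composite $\conf{K}{n}\xrightarrow{\tilde f_n}\subconf{\R^d}{n}\to K(\Z^{(n-2)!},(d-2)(n-1)+2)$. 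Its vanishing is necessary for the existence of a lift, because a lift produces an equivariant null-homotopy of this composite through $\conf{\R^d}{n}$.

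For the completeness assertion, in the case $n\cdot\dim(K)=(d-2)(n-1)+2$ I would subdivide $K$ finely enough so that $\confs{K}{n}\hookrightarrow\conf{K}{n}$ is a $\Sigma_n$-equivariant homotopy equivalence, giving $\conf{K}{n}$ (up to equivariant equivalence) a free $\Sigma_n$-CW structure of dimension exactly $n\cdot\dim(K)$. One then runs the equivariant Postnikov obstruction theory for the map $p$ skeleton by skeleton, in the style of~\cite[Chapter II]{Bredon}; the higher obstructions to lifting $\tilde f_n$ lie in the equivariant cohomology groups $H^k_{\Sigma_n}(\conf{K}{n};\pi_{k-1}(F))$ for $k>(d-2)(n-1)+2=n\cdot\dim(K)$, and these vanish for dimensional reasons. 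Hence $\obsn{K}$ is a complete obstruction.

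The main technical obstacle will be to set up the $\Sigma_n$-equivariant Postnikov machinery carefully enough that the Eilenberg--Mac Lane space realizes the correct $\Sigma_n$-action on $\pi_{(d-2)(n-1)+1}(F)$---this representation is the \emph{cyclic Lie representation} hinted at in the introduction---and that the classifying map $C\to K(\Z^{(n-2)!},(d-2)(n-1)+2)$ can be chosen genuinely $\Sigma_n$-equivariantly. Once the abstract $\Sigma_n$-module structure is identified, the remaining arguments are a formal upgrade of those in Section~\ref{sec: Constructing the obstruction}.
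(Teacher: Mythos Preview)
Your proposal is correct and follows the same route as the paper, which in fact gives no proof beyond the single sentence ``Standard obstruction theory implies the following result.'' You have correctly unpacked what that sentence means: reduce via Proposition~\ref{proposition: inductive} to the lifting problem~\eqref{eq: C C0}, invoke the connectivity statement for the fiber $F$ recorded just before the theorem, and then run the equivariant obstruction theory exactly as in Section~\ref{sec: Constructing the obstruction} (Lemma~\ref{lemma: cohomology classifying} and Proposition~\ref{prop: O3}) with $n$ in place of $3$. Your completeness argument via the dimension of $\confs{K}{n}$ is likewise the intended one.
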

It is easy to see that for $n=2, 3$ the general definition of $\obsn{K}$ agrees with the definitions of $\vk{K}$ and $\obs{K}$ that we saw earlier.

We end this section by describing a conjectural refinement of $\obsn{K}$ to an obstruction $\obsnfr{K}$ living in equivariant stable cobordism, extending the definitions of $\vkfr{K}$ and $\obsfr{K}$ that we saw earlier. 

The construction of $\obsnfr{K}$ uses a geometric realization of the group $\pi_{(d-2)(n-1)+1}(F)\cong \Z^{(n-2)!}$ as the cohomology of the {\it space of non 2-connected graphs}. Recall that a graph $G$ is called {\it $2$-connected} if $G$ connected, and for every vertex $x$, $G\setminus\{x\}$ is connected.
\begin{definition}
For $n>1$, let $\Delta_n^2$ be the poset of non-trivial non $2$-connected graphs with vertex set $\{1,\ldots, n\}$. Let $T_n$ be the unreduced suspension of the geometric realization of $\Delta_n^2$.
\end{definition}
The space $T_n$ was initially introduced by Vassiliev, and was studied in the paper~\cite{Babsonetal}. For example, $T_2=S^0$, $T_3=S^2$, with the standard (non-trivial) action of $\Sigma_3$. 

The following is well-known~\cite{Babsonetal}.
\begin{theorem}
There is a homotopy equivalence $$T_n\simeq \bigvee_{(n-2)!} S^{2n-4}$$
\end{theorem}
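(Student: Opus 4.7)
The plan is first to reduce the claim to the non-suspended statement: for $n\geq 3$, the order complex $|\Delta_n^2|$ is homotopy equivalent to $\bigvee_{(n-2)!} S^{2n-5}$. Since unreduced suspension takes a CW-wedge of $d$-spheres to a wedge of the same number of $(d+1)$-spheres, this immediately yields $T_n \simeq \bigvee_{(n-2)!} S^{2n-4}$. The edge case $n=2$ is verified directly: the only graphs on two vertices are the empty one (which is trivial) and $K_2$ (which is 2-connected), so $\Delta_2^2$ is empty and $T_2$, being the unreduced suspension of the empty set, is $S^0 = \bigvee_{0!} S^{0}$, as required.

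For the main claim on $|\Delta_n^2|$, I would follow the topological-combinatorics strategy of \cite{Babsonetal}. The idea is to construct an acyclic matching (in Forman's discrete-Morse sense) on the face poset of $|\Delta_n^2|$, equivalently on chains in $\Delta_n^2$, whose critical cells all lie in dimension $2n-5$ and number exactly $(n-2)!$. A convenient matching chooses a total order on the edges of $K_n$ (say lexicographic) and, for each chain $G_1 \subsetneq \cdots \subsetneq G_k$ of non-trivial non-2-connected graphs, pairs it with a neighboring chain obtained by adding or deleting a distinguished edge selected via an inductive rule based on the ``block-tree'' decomposition of graphs into 2-connected blocks meeting at cut vertices. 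Standard Forman machinery then presents $|\Delta_n^2|$ as a CW complex with $(n-2)!$ cells of dimension $2n-5$ and no other positive-dimensional cells; since the complex is simply connected for $n\geq 3$ (the bottom cells sit in dimension $\geq 1$ and the connectivity is high enough), this forces the wedge-of-spheres homotopy type.

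The remaining step is a bijective count of the critical chains. I would identify them with cyclic orderings of $\{1,\ldots,n\}$ modulo rotation (equivalently, with linear orders of $\{2,\ldots,n\}$), a combinatorial model of size $(n-2)!$ that coincides with the dimension of the cyclic Lie representation of $\Sigma_n$. The bijection exploits the observation that the block-tree of a critical (unmatched) graph chain in $\Delta_n^2$ carries exactly the data of such a cyclic order: the blocks are forced to be edges and the cut-vertex structure determines the order.

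The main obstacle is the verification that the proposed matching is indeed acyclic and that every unmatched chain corresponds to exactly one critical cell of the correct dimension. An alternative and arguably cleaner route avoids explicit Morse matchings: one exhibits an EL-labeling of the augmented poset $\hat{0} \cup \Delta_n^2 \cup \hat{1}$ with labels in the edge set of $K_n$, whereupon Bj\"orner's theorem on lexicographic shellability delivers both the wedge-of-spheres homotopy type and the count of spheres via the falling-chain bijection. Either route ultimately reduces to a delicate case analysis of how block decompositions transform under minimal edge insertion and deletion, which is the core combinatorial content of the theorem.
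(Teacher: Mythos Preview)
The paper does not give its own proof of this theorem: it records the statement as well-known and cites \cite{Babsonetal}. So there is nothing to compare at the level of argument; your sketch is effectively a summary of the approach in the cited reference rather than an alternative to anything in the paper.

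As a sketch of the Babson--Bj\"orner--Linusson--Shareshian--Welker argument your outline is reasonable. One small correction: your parenthetical that $|\Delta_n^2|$ is simply connected for $n\ge 3$ is false at $n=3$ (where $|\Delta_3^2|\simeq S^1$), and in any case unnecessary. Once discrete Morse theory hands you a CW model with a single $0$-cell and all remaining cells in a fixed dimension $d$, the attaching maps are forced to be null and the complex is automatically a wedge of $d$-spheres; no separate connectivity argument is needed. The genuine work, as you correctly flag, is in verifying acyclicity of the matching (or the EL-labeling) and executing the bijection with cyclic orders; your proposal does not carry this out, but it identifies the right ingredients.
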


\begin{conj}\label{conj: principal}
There is a natural $\Sigma_n$-equivariant map
\[
\subconf{\R^d}{n} \to \mbox{map}_*(T_n, \Omega^\infty\Sigma^\infty S^{d(n-1)})
\]
So that there is an $(d-2)n+1$-cartesian square
\[
\begin{tikzcd}
\conf{\R^d}{n} \ar{r}\ar{d} & \subconf{\R^d}{n}\ar{d}\\
* \ar{r} & \operatorname{map}_*(T_n, \Omega^\infty\Sigma^\infty S^{d(n-1)})
\end{tikzcd}
\]
\end{conj}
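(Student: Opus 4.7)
The plan is to build the classifying map by an equivariant Pontryagin--Thom collapse construction organized by the poset $\Delta_n^2$ of non-trivial non-$2$-connected graphs, and then establish the cartesian property via Blakers--Massey together with a calculation of the first non-trivial layer. This mirrors, in the general $n$ case, the construction used in Section~\ref{sec: construction} for $n=3$ (where $T_3 = S^2$); the extra difficulty is that one must now keep track of several Thom--Pontryagin contributions indexed by a non-trivial poset.

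For each non-trivial non-$2$-connected graph $G$ on $\{1,\ldots,n\}$, let $\Delta_G \subset (\R^d)^n$ denote the partial diagonal on which $x_i = x_j$ for every edge $\{i,j\}$ of $G$. The key geometric observation is that because $G$ has a cut vertex, the collision pattern recorded by $\Delta_G$ is visible from the smaller configurations comprising $\subconf{\R^d}{n}$ via the homotopy-limit description in~\eqref{eq: extensions}. Concretely, this yields a Pontryagin--Thom collapse from $\subconf{\R^d}{n}$ to the Thom space of the normal bundle of $\Delta_G$. These collapses are natural with respect to refinement of graphs, so they assemble into a map from $\subconf{\R^d}{n}$ into the homotopy colimit over $G \in \Delta_n^2$ of such Thom spaces. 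Collapsing every partial-diagonal Thom space $\Sigma_n$-equivariantly onto the Thom space of the thin diagonal (whose normal bundle has dimension $d(n-1)$), and taking the unreduced suspension in the target, identifies this homotopy colimit with $T_n \wedge S^{d(n-1)}$. Stabilizing and taking the evident adjoint gives the required $\Sigma_n$-equivariant map
\[
\subconf{\R^d}{n} \longrightarrow \map_*(T_n,\ \Omega^\infty \Sigma^\infty S^{d(n-1)}).
\]
The composition out of $\conf{\R^d}{n}$ is nullhomotopic by construction, since any configuration in $\conf{\R^d}{n}$ lies in the complement of every partial diagonal and is therefore sent to the basepoint simultaneously in every Thom space.

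To verify that the resulting square is $((d-2)n+1)$-cartesian, I would compare the induced map of homotopy fibers and show that it is $(d-2)n$-connected. The connectivity statement preceding the conjecture already tells us that both fibers are $(d-2)(n-1)$-connected, and that the first non-trivial homotopy group has rank $(n-2)!$ on the domain; on the codomain, the same rank comes from the decomposition $T_n \simeq \bigvee_{(n-2)!} S^{2n-4}$. I would identify these $\Sigma_n$-representations as the cyclic Lie / graph-complex representation, and verify that the Pontryagin--Thom map realizes the comparison by localizing near a single graph stratum --- taking for instance $G$ the star centered at one vertex --- and reducing to a local Borromean-ring-type model that directly generalizes the computation of Section~\ref{sec: construction}. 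A Blakers--Massey estimate applied to the cube $S \mapsto \conf{\R^d}{S}$ then propagates the isomorphism upward through the claimed range.

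The main obstacle will be this last comparison, which requires not just an abstract count of homotopy groups but an honest $\Sigma_n$-equivariant identification of $\pi_{(d-2)(n-1)+1}$ of the fiber with the Lie-type representation carried by $H_\ast(T_n)$, together with a verification that our Pontryagin--Thom construction realizes it on the nose. The ingredients needed are essentially the operadic / graph-complex description of the layers of the Goodwillie--Weiss tower, which is well understood in the smooth setting and which our primitive tower~\eqref{eq:tower} inherits in the stable range; the hard part is to extract this identification cleanly in the present configuration-space context and to track the $\Sigma_n$-action carefully through the assembly of the collapse map.
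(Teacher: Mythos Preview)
The statement you are attempting to prove is stated in the paper as a \emph{conjecture}, not a theorem: the paper gives no proof of it. In the remark immediately following the conjecture the authors explain their reasons for believing it (orthogonal calculus identifies $\map_*(T_n,\Omega^\infty\Sigma^\infty S^{d(n-1)})$ as the bottom layer of the difference between $\conf{\R^d}{n}$ and $\subconf{\R^d}{n}$), but they explicitly say that ``the Thom-Pontryagin collapse map that we defined for the case $n=3$ in Section~\ref{sec: construction} does not seem to generalize easily to higher values of $n$.'' So there is no paper proof to compare against, and the authors themselves flag precisely the strategy you are pursuing as problematic.

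The concrete gap in your sketch is at the first step. You assert that for each non-$2$-connected graph $G$ there is a Pontryagin--Thom collapse from $\subconf{\R^d}{n}$ onto the Thom space of the normal bundle of the partial diagonal $\Delta_G$. But $\subconf{\R^d}{n}$ is the homotopy limit of the punctured $n$-cube $S\mapsto\conf{\R^d}{S}$; it is not $(\R^d)^n$, nor a manifold in which $\Delta_G$ sits as a submanifold with a tubular neighborhood. For $n=3$ this issue disappears only because of the accidental equivalence $\subconf{\R^d}{3}\simeq\conf{\R^d}{2}^3\simeq(\widetilde S^{d-1})^3$, which is a genuine product manifold containing a thin diagonal. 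For $n\ge 4$ no such model is available, and the phrase ``the collision pattern recorded by $\Delta_G$ is visible from the smaller configurations'' does not by itself produce a map to a Thom space. Likewise, your assembly step---identifying a homotopy colimit of Thom spaces over $\Delta_n^2$ with $T_n\wedge S^{d(n-1)}$---is asserted rather than argued; even granting the individual collapse maps, the coherence needed to factor through the unreduced suspension of $|\Delta_n^2|$ is exactly the delicate point. Your closing paragraph is honest about this, but what it describes is not a remaining detail: it is the whole content of the conjecture.
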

Assuming the conjecture, we have natural maps
\[
T_{n-1}\emb(K, \R^d)  \to  \map(\conf{K}{n}, \subconf{\R^d}{n})^{\Sigma_n} \to \operatorname{map}_*(\conf{K}{n}_+\wedge T_n, \Omega^\infty\Sigma^\infty S^{d(n-1)})^{\Sigma_n}
\]
This map associates to a point in $T_{n-1}\emb(K, \R^d) $ an element of the equivariant stable cohomotopy group of $\conf{K}{n}\wedge T_n$. This element is an obstruction $\obsnfr{K}$ to the point of $T_{n-1}\emb(K, \R^d) $ being in the image of $T_{}\emb(K, \R^d) $. The obstruction is complete so long as $d\ge\dim(K)+2$. 
\begin{remark}
Our reasons to believe Conjecture~\ref{conj: principal} come from Orthogonal Calculus~\cite{WeissOrth}. The functor that sends $\R^d$ to the spectrum $$\operatorname{map}_*(T_n, \Omega^\infty\Sigma^\infty S^{d(n-1)})$$ is the bottom non-trivial layer of the difference between $\conf{\R^d}{n}$ and $\subconf{\R^d}{n}$. In fact, the conjecture is almost a formal consequence of the existence of orthogonal calculus and what we know about the derivatives of functors related to $\conf{\R^d}{n}$. However, it would be interesting to have an explicit map
\[
\subconf{\R^d}{n} \to \mbox{map}_*(T_n, \Omega^\infty\Sigma^\infty S^{d(n-1)})
\]
with some sort of geometric interpretation. The Thom-Pontryagin collapse map that we defined for the case $n=3$ in Section~\ref{sec: construction} does not seem to generalize easily to higher values of $n$. 
\end{remark}

\section{Questions and conjectures} \label{sec: Questions}

In conclusion we will mention several problems motivated by the results of this paper. 

\subsection{Equivalence of higher obstructions.}
Theorem \ref{thm: obstructions coincide} shows that the obstruction $\gobs{K}$ equals the pullback of $\obs{K}$ to $H^{6}_{\Sigma_3}(\confs{K}{3}; \Z[(-1)])$. We conjecture that the analogous relation holds for higher obstructions as well. More precisely, we conjecture that a map $f\colon K\longrightarrow \R^4$ together with a Whitney tower of order $n-2$ for any $n\geq 4$ determines a point in $T_{n-1}\emb(K, \R^4)$, and the obstruction $\gobsn{K, W}$ from Definition  \ref{def: Wn} equals the pullback of $\obsn{K}$ to $H^{2n}_{\Sigma_n}\left(\confs{K}{n}, \Z^{(n-2)!}\right)$.

\subsection{Conjectural higher cohomological obstructions.}  \label{sec: Massey}

Recall the discussion of the Arnold class (Definition \ref{def: Arnold}) and its relation with the obstruction $\obs{K}$ (Lemma \ref{lem: o3 Arnold}).
Here we formulate a certain version of Massey products, defined when the Arnold class vanishes. 

For convenience of choosing signs below, we will restrict ourselves to the case $d=4$;  analogous cohomological classes can be constructed for any $d$.  

Notice that for every choice of indices $i, j$ satisfying $i\ne j$ and $1\le i,j \le 4$, there is a map $r_{i,j}\colon\subconf{\R^4}{4}\to \conf{\R^4}{2},$ induced by restriction to the ordered pair $(i, j)$. As usual, let us pick a generator $u\in H^{3}(\conf{\mathbb R^4}{2}$. Let $U$ be some cocycle representative of $u$, and for each $i, j$ as above let $U_{ij}$ be the pullback of $U$ along $r_{i,j}$.
Assume that there exists a map $\tilde f_4\colon \conf{K}{4}\longrightarrow \subconf{\R^d}{4}$ as in diagram (\ref{eq: C C0}).
Denote by $V_{ij}$ the $3$-cocycles on $\conf{K}{4}$ obtained as the pull-backs of $U_{ij}$, $i,j\in \{1,\ldots,4\}$.
Consider 
\[
V_{i,j,k}:=V_{ij}V_{jk}+V_{jk}V_{ki}+V_{ki}V_{ij}.
\]
It follows from the existence of the map $\tilde f_4\colon \conf{K}{4}\longrightarrow \subconf{\R^d}{4}$ and the resulting vanishing of the Arnold class in Lemma \ref{lem: o3 Arnold} that for each subset $\{i,j,k\}\subset \{1,\ldots, 4\}$ the cohomology class of $V_{i,j,k}$ in $H^6(\conf{K}{4})$ is trivial. Consider $5$-cochains $X_{ijk}$ on $C(K,4)$, defined by ${\delta}X_{ijk}=V_{ijk}$. Consider the $8$-cochain on $\conf{K}{4}$:
\begin{equation} \label{eq: Y1}
Y_{(12)(34)}:=X_{123}(V_{14}-V_{24}) + X_{234}(V_{31}-V_{41})+X_{341}(V_{32}-V_{42})+X_{412}(V_{13}-V_{23})
\end{equation}
One checks that this is a cocycle; in fact there are two additional cocycles which we denote $Y_{(13)(24)}, Y_{(14)(23)}$; for example
\begin{equation} \label{eq: Y2}
Y_{(13)(24)}:=X_{123}(V_{34}-V_{14}) + X_{234}(V_{41}-V_{21})+X_{341}(V_{12}-V_{32})+X_{412}(V_{23}-V_{43})
\end{equation}
The sum of these three cocycles is zero. We conjecture that these cohomology classes are obstructions to lifting $\tilde f_4$ in diagram (\ref{eq: C C0}) to a map $f_4 \colon \conf{K}{4}\longrightarrow \conf{\R^d}{4}$, and that they are related to the obstructions $\operatorname{{\mathcal O}_4\!}{(K)}, \operatorname{{\mathcal W}_4\!}{(K)}\in H^{8}_{\Sigma_4}\left(\conf{K}{4}, \Z^{2}\right)$ analogously to Lemma \ref{lem: o3 Arnold}. Moreover, formulas (\ref{eq: Y1}), (\ref{eq: Y2}) suggest that these classes admit a systematic generalization to $C(K,n)$ for larger $n$ as well.

\subsection{Relation to other obstructions for a class of $2$-complexes in ${\mathbb R}^4$} 
For $2$-complexes $K$ with $H_1(K; {\mathbb Q})=0$, a sequence of obstructions to embeddability in ${\mathbb R}^4$ was defined in \cite{Kr}. The context in that reference, in terms of Massey products on $3$-manifold boundary of $4$-dimensional thickenings of $K$, is quite different from the setting of our work. Determining how the obstructions in \cite{Kr}, for $2$-complexes with vanishing first homology with rational coefficients, fit in the framework developed in this paper is an interesting question.

\subsection{Almost embeddings and complexes with vanishing obstructions} \label{sec: convergence} 
A PL {\em almost-embedding} of a complex $K$ is a PL map $K\longrightarrow \R^d$ such that non-adjacent
simplices of $K$ do not intersect in the image \cite[Section 4]{FKT}.\footnote{We would like to thank Arkadiy Skopenkov for pointing out the relation to almost-embeddings and the relevance of the examples of \cite{SS, SSS}, discussed in this section.}
Note that an almost embedding gives rise to a $\Sigma_n$-equivariant map of simplicial configuration spaces $\confs{K}{n}\longrightarrow \confs{\R^d}{n}$; in fact both the obstructions $\gobsn{K}$ (for $2$-complexes in $R^4$) and the ``simplicial'' version of $\obsn{K}$, defined using $\confs{K}{n}$, are obstructions to the existence of a PL almost embedding. The same conclusion for the finer obstructions $\obsn{K}$ (defined using $\conf{K}{n}$ rather than $\confs{K}{n}$) is also possible but not immediate.

For any pair of dimensions $m, d$ outside the stable range, that is $2d< 3(m+1)$, with $d\geq {\rm max}(4,n)$, the authors of
\cite{SS, SSS} constructed examples of $m$-complexes which PL almost embed but do not PL embed into $\R^d$; see also related discussion in \cite[Section 5]{Skopenkov}. These examples are similar  to the complex illustrated in figure \ref{fig:Example}, except that the wedge sum identifies vertices of $S, S'$; PL embeddability is established using the finger move familiar in $4$-manifold topology, and its higher dimensional analogues. 

These observations suggest the question of whether the vanishing of the obstructions $\obsn{K}$ for all $n$ is sufficient for almost embeddability of a $2$-complex $K$ in $\R^4$. In another direction, recall that the embedding obstructions $\obsn{K}$ formulated in this paper use a weaker version of the Goodwillie-Weiss
tower. One may wonder whether a different version of the 
tower may be formulated, converging in dimensions $2d< 3(m+1)$.

As mentioned in the introduction, in the special {\em relative} case where $K$ is the disjoint union of disks $ D^2_i$ and the embedding problem in the $4$-ball has a prescribed boundary condition -- a link $L$ formed by the boundaries of the disks $ \partial D^2_i$ in $S^3=\partial D^4$ -- our obstructions correspond to the Milnor invariants (with non-repeating coefficients) of $L$. There are well-known examples (boundary links) which have trivial Milnor's invariants but are not slice. (Further, there are examples \cite{CO} of links with vanishing Milnor invaraints which are not concordant to boundary links.)
However
in our context there is no boundary condition present, and there is considerable flexibility in
re-embedding, thus the obstructions from link theory do not admit an immediate analogue for embedding of complexes.

\subsection{Intrinsic characterization of the obstructions.} Given a $2$-complex $K$ with trivial $\vk{K}$, is there an intrinsic characterization of classes in $H^{3}(\confs{K}{2})$ that arise (as the pullback of a generator of $H^{3}(\conf{\R^4}{2})$) from maps to $\R^4$ as in Lemma \ref{Whitney-config}? The proof of non-embeddability of examples in Section \ref{sec:Examples} relies on Proposition \ref{prop: link}. A characterization of such classes $H^{3}(\confs{K}{2})$ might lead to an obstruction theory (the Arnold class, and higher cohomological operations in Section \ref{sec: Massey}) defined without a reference to maps into configurations spaces of $\R^4$.

\subsection{Complexity of embeddings.}
There have been recent advances in the subject of complexity of embeddings of complexes into Euclidean spaces, both from algorithmic and geometric perspectives, cf. \cite{MTW, FK}. In higher dimensions there is an upper bound $O({\rm exp}(N^{4+\epsilon}))$ on the {\em refinement complexity} (r.c.), i.e. the number of subdivisions needed to PL embed a simplicial $m$-complex (with trivial $\vk{K}$) into $\R^{2m}$, $m>2$, in terms of the number $N$ of simplices of $K$. For $2$-complexes in $\R^4$ the complexity problem is open. The examples in \cite{FK} (relying on the van Kampen obstruction) have exponential r.c., and the embedding problem in this dimension is NP-hard \cite{MTW}. But to the authors' knowledge it is an open question whether r.c. could even be non-recursive (and correspondingly whether the embedding problem is algorithmically undecidable). It is a natural question whether the higher obstruction theory developed in this paper may be used to shed new light on the problem.

\end{document}